\documentclass{article}

\usepackage{amsmath,amssymb,amsthm,mathrsfs}
\usepackage{graphicx}
\usepackage[colorlinks=true]{hyperref}
\usepackage{pdfsync}

\topmargin -1cm
\textheight 21cm
\textwidth 15cm 
\oddsidemargin 1cm

\newtheorem{theorem}{Theorem}
\newtheorem{proposition}{Proposition}
\newtheorem{lemma}{Lemma}

\theoremstyle{definition}\newtheorem{example}{Example}
\theoremstyle{definition}\newtheorem{definition}{Definition}
\theoremstyle{definition}\newtheorem{remark}{Remark}

\def\R{\textrm{I\kern-0.21emR}}
\def\N{\textrm{I\kern-0.21emN}}
\def\Z{\mathbb{Z}}
\def\T{\mathbb{T}}
\def\TS{\T \backslash \{ \sup \T \}}

\def\B{\overline{B}}
\def\L{\mathrm{L}}
\def\E{\mathrm{E}}
\def\S{\mathrm{S}}
\def\D{\mathcal{D}^\Omega}
\def\DS{\mathcal{D}^\Omega_\mathrm{stab}}
\def\V{\mathcal{V}}
\def\U{\mathcal{U}}
\def\UU{\mathcal{UQ}^b_{\mathrm{ad}}}

\def\O{\mathcal{O}}
\def\P{\mathrm{P}}
\def\PPP{\mathrm{Aff}(\Omega)}

\def\CC{\mathscr{C}}
\def\RR{\mathrm{RS}}
\def\SS{\mathrm{RD}}
\def\LL{\mathscr{L}}
\def\DD{\Delta}
\def\AC{\mathrm{AC}}
\def\Int{\mathrm{Int}}
\def\Coad{\mathrm{\overline{Co}}}
\def\Opp{\mathrm{Opp}}
\def\OCP{\bf (OCP)_\T}

\DeclareMathOperator*{\supess}{sup\,ess}

\newcommand{\fonction}[5]{\begin{array}[t]{lrcl}#1 :&#2 &\longrightarrow &#3\\&#4& \longmapsto &#5 \end{array}}

\renewcommand{\leq}{\leqslant}
\renewcommand{\geq}{\geqslant}

\title{Pontryagin Maximum Principle for finite dimensional nonlinear optimal control problems on time scales}

\author{Lo\"ic Bourdin\footnote{Laboratoire de Math\'ematiques et de leurs Applications - Pau (LMAP). UMR CNRS 5142. Universit\'e de Pau et des Pays de l'Adour. \texttt{bourdin.l@univ-pau.fr}} ,
Emmanuel Tr\'elat\footnote{Universit\'e Pierre et Marie Curie (Univ. Paris 6) and Institut Universitaire de France, CNRS UMR 7598, Laboratoire Jacques-Louis Lions, F-75005, Paris, France. \texttt{emmanuel.trelat@upmc.fr}}
}

\date{}

\begin{document}

\maketitle

\begin{abstract}
In this article we derive a strong version of the Pontryagin Maximum Principle for general nonlinear optimal control problems on time scales in finite dimension. The final time can be fixed or not, and in the case of general boundary conditions we derive the corresponding transversality conditions. Our proof is based on Ekeland's variational principle. Our statement and comments clearly show the distinction between right-dense points and right-scattered points. At right-dense points a maximization condition of the Hamiltonian is derived, similarly to the continuous-time case. At right-scattered points a weaker condition is derived, in terms of so-called stable $\Omega$-dense directions. We do not make any specific restrictive assumption on the dynamics or on the set $\Omega$ of control constraints.
Our statement encompasses the classical continuous-time and discrete-time versions of the Pontryagin Maximum Principle, and holds on any general time scale, that is any closed subset of $\R$.
\end{abstract}

\bigskip

\noindent\textbf{Keywords:} Pontryagin Maximum Principle; optimal control; time scale;  transversality conditions; Ekeland's Variational Principle; needle-like variations; right-scattered point; right-dense point.

\bigskip

\noindent\textbf{AMS Classification:} 34K35; 34N99; 39A12; 39A13; 49K15; 93C15; 93C55.

\section{Introduction}\label{part0}
Optimal control theory is concerned with the analysis of controlled dynamical systems, where one aims at steering such a system from a given configuration to some desired target one by minimizing or maximizing some criterion. The Pontryagin Maximum Principle (denoted in short PMP), established at the end of the fifties for finite dimensional general nonlinear continuous-time dynamics (see \cite{pont}, and see \cite{gamk} for the history of this discovery), is the milestone of the classical optimal control theory. It provides a first-order necessary condition for optimality, by asserting that any optimal trajectory must be the projection of an extremal. The PMP then reduces the search of optimal trajectories to a boundary value problem posed on extremals. Optimal control theory, and in particular the PMP, have an immense field of applications in various domains, and it is not our aim here to list them. We refer the reader to textbooks on optimal control such as
\cite{agrach,Bon-Chy03,trel2,bres,brys,BulloLewis,hest,Jurdjevic,lee,pont,Schattler,seth,trel} 
for many examples of theoretical or practical applications of optimal control, essentially in a continuous-time setting.

Right after this discovery the corresponding theory has been developed for discrete-time dynamics, under appropriate convexity assumptions (see e.g.\ \cite{Halkin,holt2,holt}), leading to a version of the PMP for discrete-time optimal control problems. The considerable development of the discrete-time control theory was motivated by many potential applications e.g.\ to digital systems or in view of discrete approximations in numerical simulations of differential controlled systems. We refer the reader to the  textbooks \cite{bolt,cano,mord,seth} for details on this theory and many examples of applications.
It can be noted that some early works devoted to the discrete-time PMP (like \cite{fan}) are mathematically incorrect. Many counter-examples were provided in \cite{bolt} (see also \cite{mord}), showing that, as is now well known, the exact analogous of the continuous-time PMP does not hold at the discrete level. More precisely, the maximization condition of the PMP cannot be expected to hold in general in the discrete-time case. Nevertheless a weaker condition can be derived, see \cite[Theorem~42.1 p.\ 330]{bolt}. Note as well that approximate maximization conditions are given in \cite[Section~6.4]{mord} and that a wide literature is devoted to the introduction of convexity assumptions on the dynamics allowing one to recover the maximization condition in the discrete case (such as the concept of \textit{directional convexity} assumption used in \cite{cano,holt2,holt} for example).

The \textit{time scale} theory was introduced in \cite{hilg} in order to unify discrete and continuous analysis. A time scale $\T$ is an arbitrary non empty closed subset of $\R$, and a dynamical system is said to be posed on the time scale $\T$ whenever the time variable evolves along this set $\T$. The continuous-time case corresponds to $\T=\R$ and the discrete-time case corresponds to $\T=\Z$. 
The time scale theory aims at closing the gap between continuous and discrete cases and allows one to treat more general models of processes involving both continuous and discrete time elements, and more generally for dynamical systems where the time evolves along a set of a complex nature which may even be a Cantor set (see e.g.\ \cite{gama,may} for a study of a seasonally breeding population whose generations do not overlap, or \cite{ati} for applications to economics). 
Many notions of standard calculus have been extended to the time scale framework, and we refer the reader to \cite{agar2,agar3,bohn,bohn3} for details on this theory.
 
The theory of the calculus of variations on time scales, initiated in \cite{bohn2}, has been well studied in the existing literature (see e.g.\ \cite{torr8,bohn4,torr9,hils,hils1}).
Few attempts have been made to derive a PMP on time scales. In \cite{hils2} the authors establish a \textit{weak} PMP for shifted controlled systems, where the controls are not subject to any pointwise constraint and under certain restrictive assumptions.
A strong version of the PMP is claimed in \cite{zhan2} but many arguments thereof are erroneous (see Remark \ref{remZhan} for details). 

The objective of the present article is to state and prove a strong version of the PMP on time scales, valuable for general nonlinear dynamics, and without assuming any unnecessary Lipschitz or convexity conditions. Our statement is as general as possible, and encompasses the classical continuous-time PMP that can be found e.g.\ in \cite{lee,pont} as well as all versions of discrete-time PMP's mentioned above. In accordance with all known results, the maximization condition is obtained at right-dense points of the time scale and a weaker one (similar to \cite[Theorem 42.1 p.\ 330]{bolt}) is derived at right-scattered points. Moreover, we consider general constraints on the initial and final values of the state variable and we derive the resulting transversality conditions. We provide as well a version of the PMP for optimal control problems with parameters.

The article is structured as follows. In Section~\ref{part1}, we first provide some basic issues of time scale calculus (Subsection~\ref{section1}). We define some appropriate notions such as the notion of stable $\Omega$-dense direction in Subsection \ref{sec_topoprelim}. In Subsection \ref{section2} we settle the notion of admissible control and define general optimal control problems on time scales. Our main result (Pontryagin Maximum Principle, Theorem \ref{thmmain}) is stated in Subsection \ref{section2bis1}, and we analyze and comment the results in a series of remarks.
Section \ref{part3} is devoted to the proof of Theorem \ref{thmmain}. First, in Subsection \ref{section10} we make some preliminary comments explaining which obstructions may appear when dealing with general time scales, and why we were led to a proof based on Ekeland's Variational Principle. We also comment on the article \cite{zhan2} in Remark \ref{remZhan}. In Subsection \ref{section3}, after having shown that the set of admissible controls is open, we define needle-like variations at right-dense and right-scattered points and derive some properties. In Subsection \ref{section4}, we apply Ekeland's Variational Principle to a well chosen functional in an appropriate complete metric space and then prove the PMP.

\section{Main result}\label{part1}
Let $\T$ be a time scale, that is, an arbitrary closed subset of $\R$. We assume throughout that $\T$ is bounded below and that $\mathrm{card} (\T) \geq 2$. We denote by $a = \min \T$. 

\subsection{Preliminaries on time scales}\label{section1}
For every subset $A$ of $\R$, we denote by $A_\T = A \cap \T$. An interval of $\T$ is defined by $I_\T$ where $I$ is an interval of $\R$.

The backward and forward jump operators $\rho,\sigma:\T\rightarrow\T$ are respectively defined by $\rho (t) = \sup \{ s \in \T\ \vert\ s < t \}$ and $\sigma (t) = \inf \{ s \in \T\ \vert\ s > t \}$ for every $t \in \T$, where $\rho (a) = a$ and $\sigma(\max \T) = \max \T$ whenever $\T$ is bounded above. A point $t \in \T$ is said to be \textit{left-dense} (respectively, \textit{left-scattered}, \textit{right-dense} or \textit{right-scattered}) whenever $\rho (t) = t$ (respectively, $\rho (t) < t$, $\sigma (t) = t$ or $\sigma (t) > t$). The graininess function $\mu:\T\rightarrow\R^+$ is defined by $\mu(t) = \sigma (t) -t$ for every $t \in \T$. 
We denote by $\RR$ the set of all right-scattered points of $\T$, and by $\SS$ the set of all right-dense points of $\T$ in $\T \backslash \{ \sup \T \}$. 
Note that $\RR$ is a subset of $ \T \backslash \{ \sup \T \}$, is at most countable (see \cite[Lemma 3.1]{caba}), and note that $\SS$ is the complement of $\RR$ in $\T \backslash \{ \sup \T \}$. For every $b \in \T \backslash \{ a \}$ and every $s \in [a,b[_\T \cap \SS$, we set
\begin{equation}\label{defVbs}
\V^b_s = \{ \beta \geq 0, \; s+\beta \in [s,b]_\T \}.
\end{equation}
Note that $0$ is not isolated in $\V^b_s$.

\paragraph{$\DD$-differentiability.}
We set $\T^\kappa = \T \backslash \{ \max \T \}$ whenever $\T$ admits a left-scattered maximum, and $\T^\kappa = \T$ otherwise. Let $n \in \N^*$; a function $q:\T\rightarrow\R^n$ is said to be $\DD$-differentiable at $t \in \T^\kappa$ if the limit
$$q^\DD (t) = \lim\limits_{\substack{s \to t \\ s \in \T}} \frac{q^\sigma (t) -q(s)}{\sigma (t) -s} $$
exists in $\R^n$, where $q^\sigma = q \circ \sigma$. Recall that, if $t \in \T^\kappa$ is a right-dense point of $\T$, then $q$ is $\DD$-differentiable at $t$ if and only if the limit of $\frac{q(t)-q(s)}{t-s}$ as $s \to t$, $s \in \T$, exists; in that case it is equal to $q^\DD (t)$.
If $t \in \T^\kappa$ is a right-scattered point of $\T$ and if $q$ is continuous at $t$, then $q$ is $\DD$-differentiable at $t$, and $q^\DD (t) = (q^\sigma(t) - q(t)) / \mu(t)$ (see \cite{bohn}).

If $q,q':\T\rightarrow\R^n$ are both $\DD$-differentiable at $t \in \T^\kappa$, then the scalar product $\langle q, q' \rangle_{\R^n}$ is $\DD$-differentiable at $t$ and 
\begin{equation}\label{eqleibniz}
\langle q, q' \rangle_{\R^n}^\DD (t) = \langle q^\DD (t), q'^\sigma(t) \rangle_{\R^n}  + \langle q (t), q'^\DD(t) \rangle_{\R^n} = \langle q^\DD (t), q'(t) \rangle_{\R^n}  + \langle q^\sigma (t), q'^\DD(t) \rangle_{\R^n}
\end{equation}
(Leibniz formula, see \cite[Theorem 1.20]{bohn}).

\paragraph{Lebesgue $\DD$-measure and Lebesgue $\DD$-integrability.}
Let $\mu_\DD$ be the Lebesgue $\DD$-measure on $\T$ defined in terms of Carath\'eodory extension in \cite[Chapter 5]{bohn3}. We also refer the reader to \cite{agar,caba,guse} for more details on the $\mu_\DD$-measure theory. For all $(c,d)\in\T^2$ such that $c \leq d $, there holds $\mu_\DD ([c,d[_\T) = d-c$. Recall that $A \subset \T$ is a $\mu_\DD$-measurable set of $\T$ if and only if $A$ is an usual $\mu_L$-measurable set of $\R$, where $\mu_L$ denotes the usual Lebesgue measure (see \cite[Proposition 3.1]{caba}). Moreover, if $A \subset \T \backslash \{ \sup \T \}$, then 
$$\mu_\DD ( A ) = \mu_L (A) +  \sum_{r \in A \cap \RR} \mu (r).$$
Let $A \subset \T$. A property is said to hold $\DD$-almost everywhere (in short, $\DD$-a.e.) on $A$ if it holds for every $t \in A \backslash A'$, where $A' \subset A$ is some $\mu_\DD$-measurable subset of $\T$ satisfying $\mu_\DD (A') = 0$. In particular, since $\mu_\DD (\{ r \}) = \mu (r) > 0$ for every $r \in \RR$, we conclude that if a property holds $\DD$-a.e.\ on $A$, then it holds for every $r \in A \cap \RR$. 

Let $n \in \N^*$ and let $A \subset \T \backslash \{ \sup \T \}$ be a $\mu_\DD$-measurable subset of $\T$. Consider a function $q$ defined $\DD$-a.e.\ on $A$ with values in $\R^n$. Let $A_0=A \cup ]r,\sigma(r)[_{r \in A \cap \RR}$, and let $q_0$ be the extension of $q$ defined $\mu_L$-a.e.\ on $A_0$ by $q_0 (t)=q(t)$ whenever $t \in A$, and by $q(t)=q(r)$ whenever $t \in ]r,\sigma(r)[$, for every $r \in A \cap \RR$.
Recall that $q$ is $\mu_\DD$-measurable on $A$ if and only if $q_0$ is $\mu_L$-measurable on $A_0$ (see \cite[Proposition 4.1]{caba}). 

The functional space $\L^\infty_\T (A,\R^n)$ is the set of all functions $q$ defined $\DD$-a.e.\ on $A$, with values in $\R^n$, that are $\mu_\DD$-measurable on $A$ and bounded almost everywhere.
Endowed with the norm $\Vert q \Vert_{\L^\infty_\T (A,\R^n)} = \supess\limits_{\tau \in A} \Vert q(\tau) \Vert_{\R^n}$, it is a Banach space (see \cite[Theorem 2.5]{agar}). Here the notation $\Vert\ \Vert_{\R^n}$ stands for the usual Euclidean norm of $\R^n$.
The functional space $\L^1_\T (A,\R^n)$ is the set of all functions $q$ defined $\DD$-a.e.\ on $A$, with values in $\R^n$, that are $\mu_\DD$-measurable on $A$ and such that
$\int_{A} \Vert q(\tau) \Vert_{\R^n} \, \DD \tau < + \infty$.
Endowed with the norm $\Vert q \Vert_{\L^1_\T (A,\R^n)} = \int_{A} \Vert q(\tau) \Vert_{\R^n} \, \DD \tau$, it is a Banach space (see \cite[Theorem 2.5]{agar}).
We recall here that if $q \in \L^1_\T (A,\R^n)$ then
$$ \int_{A} q(\tau) \, \DD \tau = \int_{A_0} q_0(\tau) \, d\tau =  \int_{A} q(\tau) \, d\tau +  \sum_{r \in A \cap \RR} \mu (r) q(r) $$
(see \cite[Theorems 5.1 and 5.2]{caba}). Note that if $A$ is bounded then $\L^\infty_\T (A,\R^n) \subset \L^1_\T (A,\R^n)$. 
The functional space $\L^\infty_{\mathrm{loc},\T} (\TS,\R^n)$ is the set of all functions $q$ defined $\DD$-a.e.\ on $\TS$, with values in $\R^n$, that are $\mu_\DD$-measurable on $\TS$ and such that $q \in \L^\infty_\T ([c,d[_\T,\R^n)$ for all $(c,d)\in\T^2$ such that $c < d$.

\paragraph{Absolutely continuous functions.}
Let $n \in \N^*$ and let $(c,d) \in \T^2$ such that $c < d$. Let $\CC ([c,d]_\T,\R^n)$ denote the space of continuous functions defined on $[c,d]_\T$ with values in $\R^n$. Endowed with its usual uniform norm $\Vert \cdot \Vert_\infty$, it is a Banach space. Let $\AC([c,d]_\T,\R^n)$ denote the subspace of absolutely continuous functions. 

Let $t_0 \in [c,d]_\T$ and $q:[c,d]_\T \rightarrow \R^n$. 
It is easily derived from \cite[Theorem 4.1]{caba2} that $q \in \AC([c,d]_\T,\R^n)$ if and only if $q$ is $\DD$-differentiable $\DD$-a.e.\ on $[c,d[_\T$ and satisfies $q^\DD \in \L^1_\T ([c,d[_\T,\R^n)$, and for every $t \in [c,d]_\T$ there holds
$ q(t) = q(t_0) + \int_{[t_0,t[_\T} q^\DD (\tau) \; \DD \tau$
whenever $t \geq t_0$, and
$ q(t) = q(t_0) - \int_{[t,t_0[_\T} q^\DD (\tau) \; \DD \tau $
whenever $ t \leq t_0$.

Assume that $q \in \L^1_\T ([c,d[_\T,\R^n)$, and let $Q$ be the function defined on $[c,d]_\T$ by $Q(t) = \int_{[t_0,t[_\T} q (\tau) \; \DD \tau$ whenever $ t \geq t_0$, and by $ Q(t) = - \int_{[t,t_0[_\T} q (\tau) \; \DD \tau $ whenever $t \leq t_0$.
Then $Q \in \AC([c,d]_\T)$ and $Q^\DD = q$ $\DD$-a.e.\ on $[c,d[_\T$.

Note that, if $q \in \AC([c,d]_\T, \R^n )$ is such that $q^\DD = 0$ $\DD$-a.e.\ on $[c,d[_\T$, then $q$ is constant on $[c,d]_\T$, and that, if $q,q' \in \AC([c,d]_\T, \R^n )$, then $\langle q,q' \rangle_{\R^n} \in \AC ([c,d]_\T, \R )$ and the Leibniz formula~\eqref{eqleibniz} is available $\DD$-a.e.\ on $[c,d[_\T$.

For every $q \in \L^1_\T ([c,d[_\T,\R^n)$, let $\LL_{[c,d[_\T} (q)$ be the set of points $t \in [c,d[_\T$ that are $\DD$-Lebesgue points of $q$. There holds $\mu_\DD (\LL_{[c,d[_\T}(q)) = \mu_\DD ([c,d[_\T) = d-c$, and
\begin{equation}\label{eq9999}
\lim\limits_{\substack{\beta \to 0^+ \\ \beta \in \V^d_s}} \frac{1}{\beta}  \int_{[s,s+\beta[_\T} q(\tau) \, \DD \tau = q(s),
\end{equation}
for every $s \in \LL_{[c,d[_\T}(q) \cap \SS$, where $\V^d_s$ is defined by \eqref{defVbs}.

\begin{remark}\label{footnotelebesguepoints}
Note that the analogous result for $s \in \LL_{[c,d[_\T}(q) \cap \mathrm{LD}$ is not true in general.
Indeed, let $q \in \L^1_\T ([c,d[_\T,\R^n)$ and assume that there exists a point $s \in [c,d[_\T \cap \mathrm{LD} \cap \RR$. Since $\mu_\DD(\{ s \} ) = \mu (s) > 0$, one has $s \in \LL_{[c,d[_\T}(q)$. Nevertheless the limit $\frac{1}{\beta} \int_{[s-\beta,s[_\T} q(\tau) \, \DD \tau $ as $\beta \to 0^+$ with $s-\beta \in \T$, is not necessarily equal to $q(s)$. For instance, consider $\T = [0,1] \cup \{ 2 \}$, $s=1$ and $q$ defined on $\T$ by $q(t)=0$ for every $t \neq 1$ and $q(1)=1$..
\end{remark}

\begin{remark}\label{rem18h05}
Recall that two distinct derivative operators are usually considered in the time scale calculus, namely, the $\DD$-derivative, corresponding to a forward derivative, and the $\nabla$-derivative, corresponding to a backward derivative, and that both of them are associated with a notion of integral. In this article, without loss of generality we consider optimal control problems defined on time scales with a $\DD$-derivative and with a cost function written with the corresponding notion of $\DD$-integral. Our main result, the PMP, is then stated using the notions of right-dense and right-scattered points. All problems and results of our article can be as well stated in terms of $\nabla$-derivative, $\nabla$-integral, left-dense and left-scattered points.
\end{remark}

\subsection{Topological preliminaries}\label{sec_topoprelim}
Let $m \in \N^*$ and let $\Omega$ be a non empty closed subset of $\R^m$. In this section we define the notion of stable $\Omega$-dense direction. In our main result the set $\Omega$ will stand for the set of pointwise constraints on the controls.

\begin{definition}
Let $v \in \Omega$ and $v' \in \R^m$.
\begin{enumerate}
\item We set
$\D (v,v') = \{ 0 \leq \alpha \leq 1\ \vert\ v+\alpha (v'-v) \in \Omega \}$.
Note that $0 \in \D (v,v')$. 
\item We say that $v'$ is a $\Omega$-dense direction from $v$ if $0$ is not isolated in $\D (v,v')$. The set of all $\Omega$-dense directions from $v$ is denoted by $\D (v)$.
\item We say that $v'$ is a stable $\Omega$-dense direction from $v$ if there exists $\varepsilon>0$ such that $v' \in \D (v'')$ for every $v'' \in \B (v,\varepsilon) \cap \Omega$, where $\B (v,\varepsilon)$ is the closed ball of $\R^m$ centered at $v$ and with radius $\varepsilon$.
The set of all stable $\Omega$-dense directions from $v$ is denoted by $\DS (v)$.
\end{enumerate}
\end{definition}

Note that $v' \in \DS (v) $ means that $v'$ is a $\Omega$-dense direction from $v''$ for every $v''\in\Omega$ in a neighbourhood of $v$. In the following, we denote by $\Int$ the interior of a subset. We have the following easy properties.
\begin{enumerate}
\item If $v \in \Int (\Omega)$, then $\DS (v) = \R^m$.
\item If $\Omega=\{ v \}$ then $\DS (v) = \{ v \}$;
\item If $\Omega$ is convex then $\Omega \subset \DS (v)$ for every $v \in \Omega$.
\end{enumerate}

For every $v \in \Omega$, we denote by $\Coad ( \DS (v) )$ the closed convex cone of vertex $v$ spanned by $\DS (v)$, with the agreement that $\Coad ( \DS (v) ) = \{ v \}$ whenever $\DS (v) = \emptyset$. In particular, there holds $v \in \Coad ( \DS (v) )$ for every $v \in \Omega$.

Although elementary, since these notions are new (up to our knowledge), before proceeding with our main result (stated in Section \ref{section2}) we provide the reader with several simple examples illustrating these notions.
Since $\DS (v) = \Coad (\DS (v) ) = \R^m$ for every $v \in \Int (\Omega)$, we focus on elements $v \in \partial\Omega$ in the examples below.

\begin{example}
Assume that $m=1$. The closed convex subsets $\Omega$ of $\R$ having a nonempty interior and such that $\partial\Omega \neq \emptyset$ are closed intervals bounded above or below and not reduced to a singleton. If $\Omega$ is bounded below then $ \DS (\min\Omega) = \Coad (\DS (\min\Omega)) = [\min\Omega,+\infty[$, and if $\Omega$ is bounded above then $\DS (\max\Omega) = \Coad (\DS (\max\Omega)) = ]-\infty,\max\Omega]$.
\end{example}

\begin{example}
Assume that $m=2$ and let $\Omega$ be the convex set of $v=(v_1,v_2)\in\R^2$ such that $v_1\geq 0$, $v_2 \geq 0$ and $v_1^2 + v_2^2 \leq 1$ (see Figure \ref{fig1}).
\begin{figure}[h]
\centerline{\includegraphics[width=6cm]{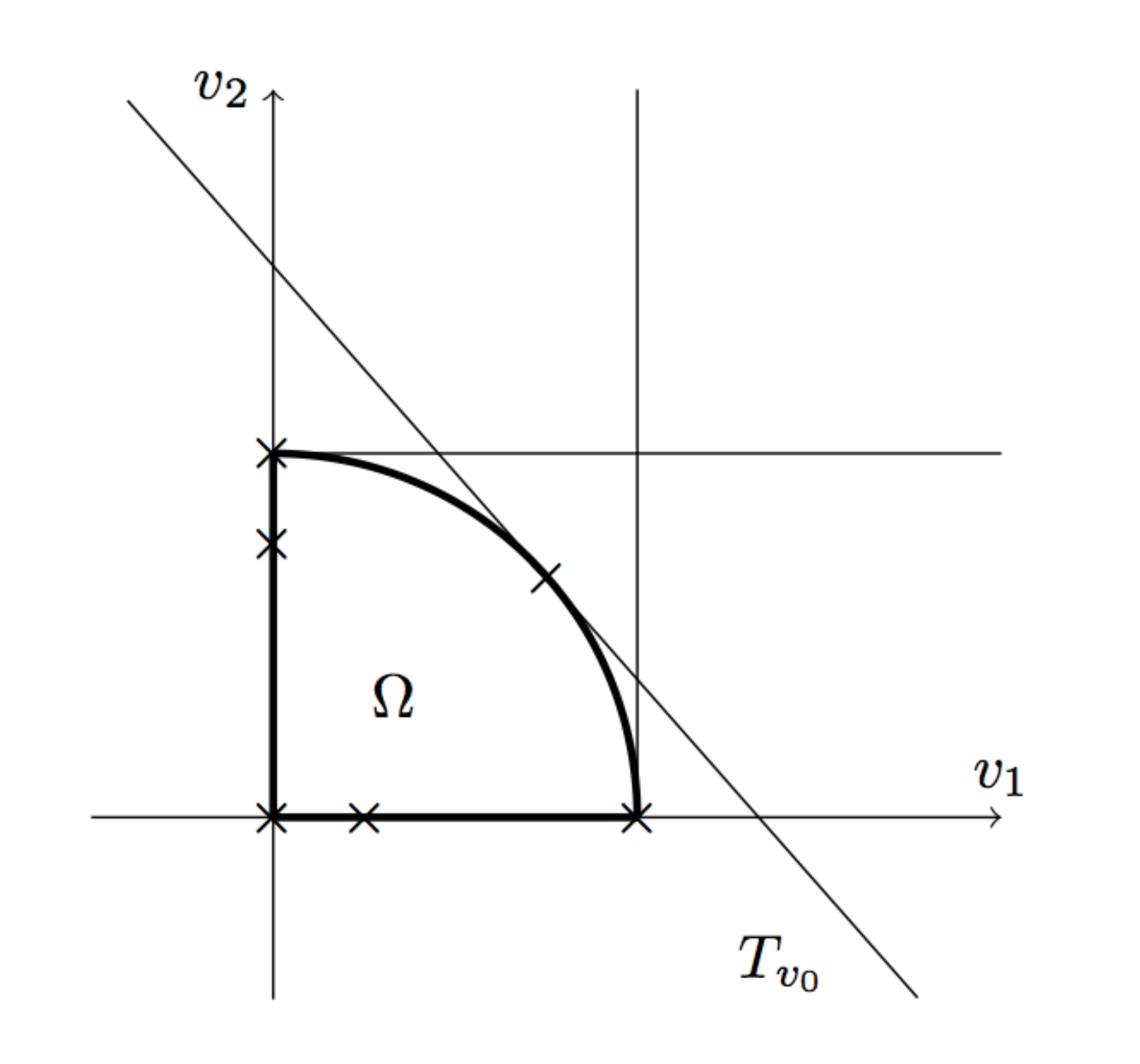}}
\caption{}\label{fig1}
\end{figure}
The stable $\Omega$-dense directions for elements $v \in \partial\Omega$ are given by:
\begin{itemize}
\item if $v=(0,0)$, then $\DS (v) = \Coad (\DS (v) ) = (\R^+)^2$;
\item if $v=(0,v_0)$ with $0 < v_0 < 1$, then $\DS (v) = \Coad (\DS (v) ) = \R^+ \times \R$;
\item if $v=(v_0,0)$ with $0 < v_0 < 1$, then $\DS (v) = \Coad (\DS (v) ) = \R \times \R^+$;
\item if $v=(0,1)$, then $\DS (v) = \{ (v_1,v_2) \in \R^2\ \vert\  v_1 \geq 0,  v_2 < 1 \} \cup \{ v \}$ and $\Coad ( \DS (v) ) = \{ (v_1,v_2) \in \R^2\ \vert\  v_1 \geq 0,  v_2 \leq 1 \}$;
\item if $v=(1,0)$, then $\DS (v) = \{ (v_1,v_2) \in \R^2\ \vert\  v_1 < 1,  v_2 \geq 0 \} \cup \{ v \}$ and $\Coad ( \DS (v) ) = \{ (v_1,v_2) \in \R^2\ \vert\  v_1 \leq 1,  v_2 \geq 0 \}$;
\item if $v=(v_0,\sqrt{1-v_0^2})$ with $0 < v_0 < 1$, then $\DS (v)$ is the union of $\{ v \}$ and of the strict hypograph of $T_{v_0}$, and $\Coad ( \DS (v) )$ is the hypograph of $T_{v_0}$.
\end{itemize}
\end{example}

\begin{remark}
Let $\Omega$ be a non empty closed convex subset of $\R^m$ and let $\PPP$ denote the smallest affine subspace of $\R^m$ containing $\Omega$. For every $v\in\partial\Omega$ that is not a corner point, $\Coad (\DS (v))$ is the half-space of $\PPP$ delimited by the tangent hyperplane (in $\PPP$) of $\Omega$ at $v$, and containing $\Omega$.
\end{remark}

\begin{example}\label{ex1}
Assume that $m=2$ and let $\Omega$ be the set of $v=(v_1,v_2)\in\R^2$ such that $v_2\leq\vert v_1\vert$ (see Figure \ref{fig2}).
\begin{figure}[h]
\centerline{\includegraphics[width=6cm]{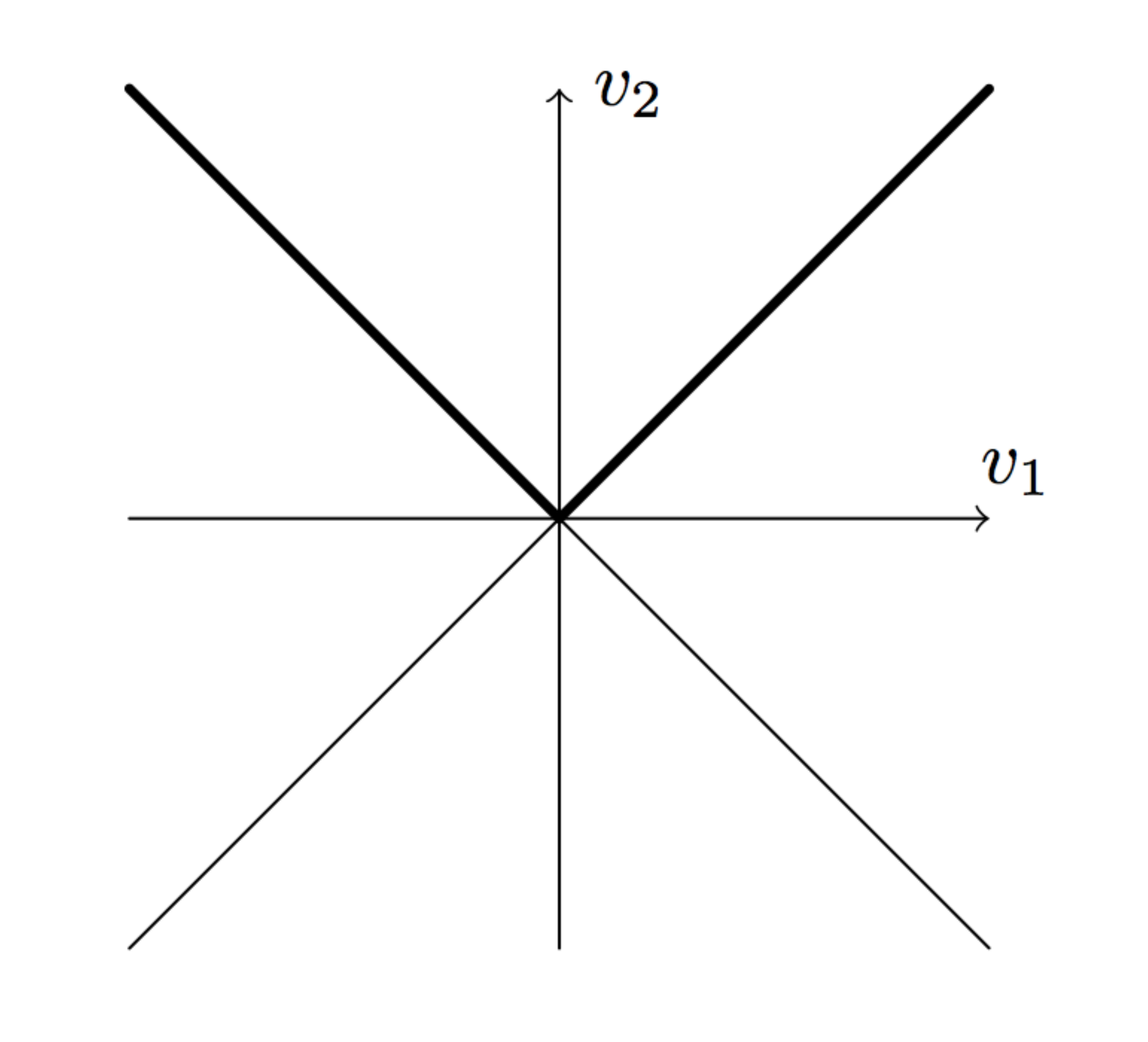}}
\caption{}\label{fig2}
\end{figure}
The stable $\Omega$-dense directions for elements $v \in \partial\Omega$ are given by:
\begin{itemize}
\item if $v=(v_0,\vert v_0 \vert)$ with $v_0 < 0$, then $\D (v) = \DS (v) = 
\{ (v_1,v_2)\in\R^2\ \vert\ v_2\leq -v_1\}$; 
\item if $v=(v_0,\vert v_0 \vert)$ with $v_0 > 0$, then $\D (v) = \DS (v) = \{ (v_1,v_2)\in\R^2\ \vert\ v_2\leq v_1\}$; 
\item if $v=(0,0)$, then $ \D (v) = \Omega$, $\DS (v) = \{ (v_1,v_2)\in\R^2\ \vert\ v_2\leq -\vert v_1\vert \}$; 
\end{itemize}
Note that, in all cases, $\DS(v)$ is a closed convex cone of vertex $v$ and therefore $\Coad ( \DS(v) ) = \DS(v)$.
\end{example}

\begin{example}\label{ex2}
Assume that $m=2$ and let $\Omega$ be the set of $v=(v_1,v_2)\in\R^2$ such that $v_2\leq v_1^2$ (see Figure \ref{fig3}).
Let $v_0\in\R$ and let $T_{v_0}(v_1)=v_0(2v_1-v_0)$ denote the graph of the tangent to $\Omega$ at the point $v = (v_0,v_0^2)$.
\begin{figure}[h]
\centerline{\includegraphics[width=6cm]{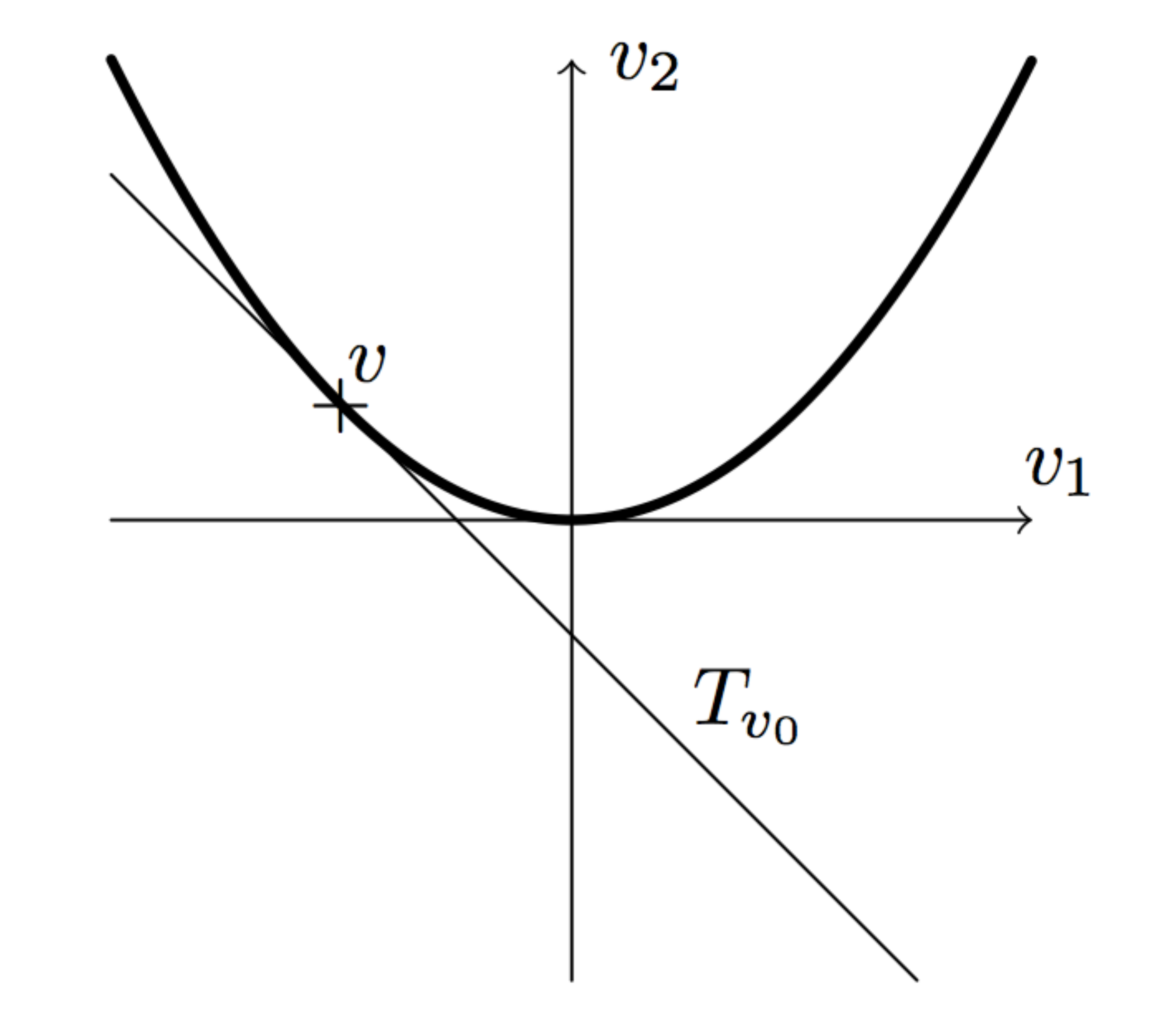}}
\caption{}\label{fig3}
\end{figure}
It is easy to see that $\D (v)$ is the hypograph of $T_{v_0}$, that $\DS (v)$ is the strict hypograph of $T_{v_0}$ (note that $v \notin \DS (v)$), and that $\Coad (\DS (v))$ is the hypograph of $T_{v_0}$. 
\end{example}

\begin{remark}
The above example shows that it may happen that $v \notin \DS (v)$. Actually, it may happen that $\DS (v) = \emptyset$. For example, if $\Omega$ is the unit sphere of $\R^2$, then $\DS (v) = \emptyset$ for every $v \in \Omega$, and hence $\Coad (\DS (v) ) = \{ v \}$.
\end{remark}

\begin{example}\label{ex3}
Assume that $m=2$. 
We set $\Omega =  \cup_{k\in\N} \overline{\Omega}_k \cup \overline{\Omega}_\infty$, where $\Omega_k = \{ (v_1,(1-v_1)/2^k)\ \vert\ 0<v_1<1\}$ for every $k \in \N$, and $\Omega_\infty = \{ (v_1,0)\ \vert\ 0<v_1<1\}$ (see Figure \ref{fig4}). Note that $\Omega$ has an empty interior. Denote by $\overline{v} = (1,0)$.
\begin{figure}[h]
\centerline{\includegraphics[width=9cm]{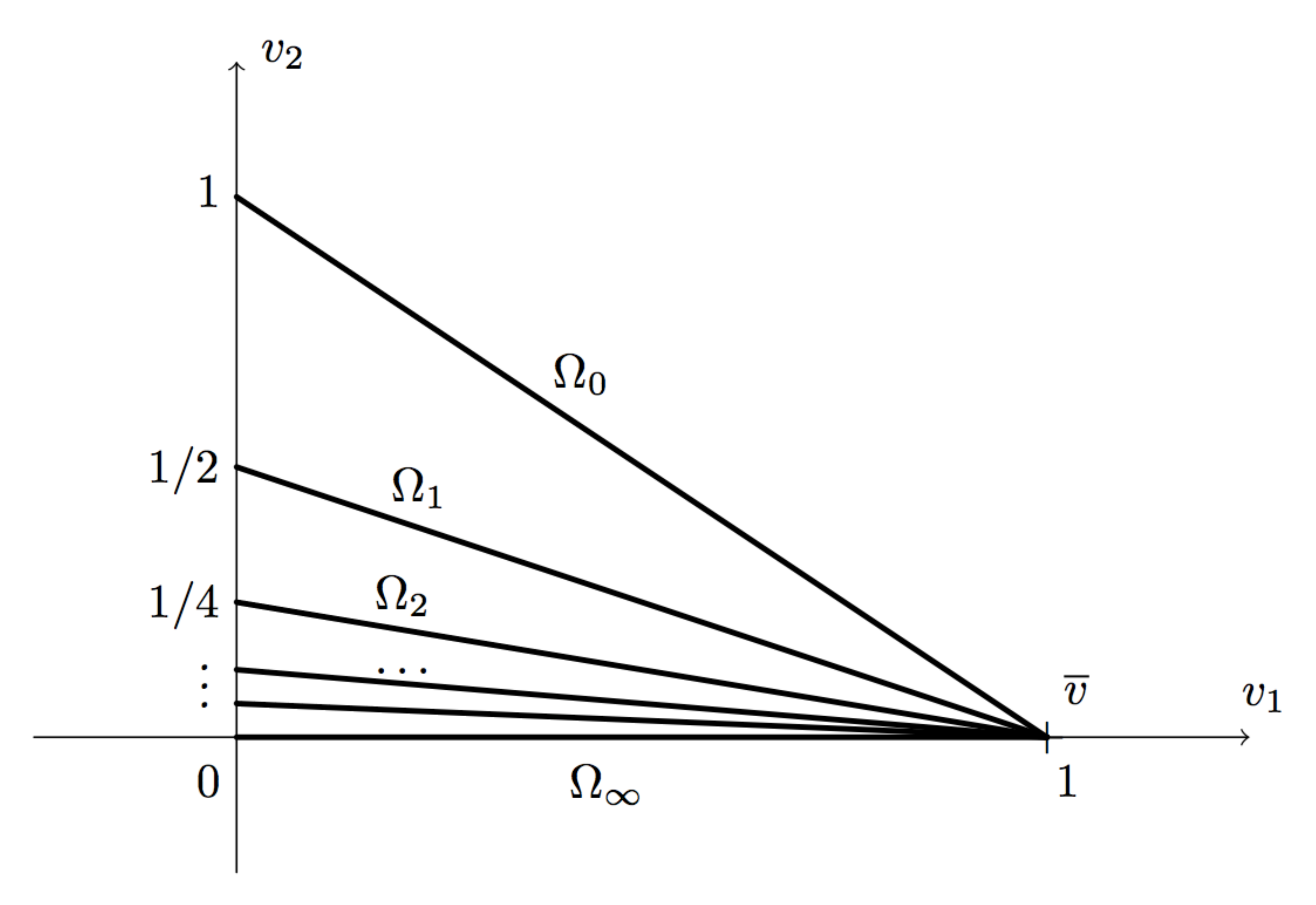}}
\caption{}\label{fig4}
\end{figure}
We have the following properties:
\begin{itemize}
\item if $v \in \Omega_k$ with $k\in\N$, then $\Coad (\DS (v) ) = \DS (v) = \D (v) = \{ (v_1,(1-v_1)/2^k)\ \vert\ v_1\in\R\}$;
\item if $v = (0,1/2^k)$ with $k\in\N$, then $\Coad (\DS (v) ) = \DS (v) = \D (v) = \{ (v_1,(1-v_1)/2^k)\ \vert\ v_1\geq 0\}$;
\item if $v=(v_1,0)$ with $0 < v_1 < 1$, then $\D (v) = \R \times \R^+$ and $\DS (v) = \{ \overline{v} \}$, and thus $\Coad ( \DS (v) ) = [v_1,+\infty[ \times \{ 0 \}$;
\item if $v = (0,0)$, then $\D (v) = (\R^+)^2$ and $\DS (v) = \{ \overline{v} \}$, and thus $\Coad ( \DS (v) ) = \R^+ \times \{ 0 \}$;
\item if $v = \overline{v}$, then 
$\D(\overline{v}) = \cup_{k\in\N} \{ (v_1,(1-v_1)/2^k)\ \vert\ v_1\leq 1\} \cup \{ (v_1,0)\ \vert\ v_1\leq 1\} $ and $\Coad (\DS (v) ) = \DS (\overline{v}) = \{ \overline{v} \}$.
\end{itemize}
\end{example}

\subsection{General nonlinear optimal control problem on time scales}\label{section2}
Let $n$ and $m$ be nonzero integers, and let $\Omega$ be a non empty closed subset of $\R^{m}$.
Throughout the article, we consider the general nonlinear control system on the time scale $\T$
\begin{equation} \label{DD-CS}
q^\DD(t) = f (q(t),u(t),t),
\end{equation}
where $f:\R^{n} \times \R^{m} \times \T \rightarrow \R^{n}$ is a continuous function of class $\CC^1$ with respect to its two first variables, and where the control functions $u$ belong to $\L^\infty_{\mathrm{loc},\T} (\TS;\Omega)$. 

Before defining an optimal control problem associated with the control system \eqref{DD-CS}, the first question that has to be addressed is the question of the existence and uniqueness of a solution of \eqref{DD-CS}, for a given control function and a given initial condition $q(a)=q_a\in\R^n$.
Since there did not exist up to now in the existing literature any Cauchy-Lipschitz like theorem, sufficiently general to cover such a situation, in the companion paper \cite{bour10} we derived a general Cauchy-Lipschitz (or Picard-Lindel\"of) theorem for general nonlinear systems posed on time scales, providing existence and uniqueness of the maximal solution of a given $\DD$-Cauchy problem under suitable assumptions like regressivity and local Lipschitz continuity, and discussed some related issues like the behavior of maximal solutions at terminal points.

Setting $\U= \L^\infty_{\mathrm{loc},\T} (\TS;\R^m)$, let us first recall the notion of a solution of \eqref{DD-CS}, for a given control $u\in\U$ (see \cite[Definitions 6 and 7]{bour10}).
The couple $(q,I_\T)$ is said to be a solution of \eqref{DD-CS} if $I_\T$ is an interval of $\T$ satisfying $a \in I_\T$ and $I_\T \backslash \{ a \} \neq \emptyset$, if $q \in \AC ([a,b]_\T,\R^n)$ and \eqref{DD-CS} holds for $\DD$-a.e.\ $t \in [a,b[_\T$, for every $b \in I_\T \backslash \{ a \}$.

According to \cite[Theorem 1]{bour10}, for every control $u\in\U$ and every $q_a \in \R^n$, there exists a unique maximal solution $q(\cdot,u,q_a)$ of \eqref{DD-CS}, such that $q(a)=q_a$, defined on the maximal interval $I_\T(u,q_a)$. The word \textit{maximal} means that $q(\cdot,u,q_a)$ is an extension of any other solution. 
Note that
$q(t,u,q_a) = q_a + \int_{[a,t[_\T} f(q(\tau,u,q_a),u(\tau),\tau) \, \DD \tau,$
for every $t \in I_\T (u,q_a)$ (see \cite[Lemma~1]{bour10}), and that
either $I_\T (u,q_a) = \T$, that is, $q(\cdot,u,q_a)$ is a \textit{global} solution of \eqref{DD-CS}, or $I_\T (u,q_a) = [a,b[_\T$ where $b \in \T \backslash \{a\}$ is a left-dense point of $\T$, and in this case, $q(\cdot,u,q_a)$ is not bounded on $I_\T (u,q_a)$
(see \cite[Theorem~2]{bour10}).

These results are instrumental to define the concept of admissible control.

\begin{definition}\label{defadm}
For every $q_a\in\R^n$, the control $u\in\U$ is said to be \textit{admissible} on $[a,b[_\T$ for some given $b \in \T \backslash \{a\}$ whenever $q(\cdot,u,q_a)$ is well defined on $[a,b]_\T$, that is, $b \in I_\T (u,q_a)$.
\end{definition}

We are now in a position to define rigorously a general optimal control problem on the time scale $\T$.

Let $j \in \N^*$ and $\S$ be a non empty closed convex subset of $\R^j$. Let $f^0:\R^{n} \times \R^{m} \times \T \rightarrow\R$ be a continuous function of class $\CC^1$ with respect to its two first variables, and $g:\R^n\times\R^n\rightarrow\R^j$ be a function of class $\CC^1$.
In what follows the subset $\S$ and the function $g$ account for constraints on the initial and final conditions of the control problem.

Throughout the article, we consider the optimal control problem on $\T$, denoted in short $\OCP$, of determining a trajectory $q^*(\cdot)$ defined on $[a,b^*]_\T$, solution of \eqref{DD-CS} and associated with a control $u^*\in \L^\infty_{\T}([a,b^*[_\T;\Omega)$, minimizing the cost function
\begin{equation}\label{cost}
C(b,u) = \int_{[a,b[_\T} f^0 (q (\tau), u(\tau), \tau ) \, \DD \tau
\end{equation}
over all possible trajectories $q(\cdot)$ defined on $[a,b]_\T$, solutions of \eqref{DD-CS} and associated with an admissible control $u\in \L^\infty_{\T}([a,b[_\T;\Omega)$, with $b\in\T \backslash \{a\}$, and satisfying $g(q(a),q(b))\in \S$.

The final time can be fixed or not. If it is fixed then $b^*=b$ in $\OCP$.

\subsection{Pontryagin Maximum Principle}\label{section2bis1}
In the statement below, the \textit{orthogonal} of $\S$ at a point $x\in \S$ is defined by
\begin{equation}\label{defortho}
\O_\S (x) = \{ x' \in \R^j\ \vert \ \forall x'' \in \S, \; \langle x', x''-x \rangle_{\R^j} \leq 0 \}. 
\end{equation}
It is a closed convex cone containing $0$.

The \textit{Hamiltonian} of the optimal control problem $\OCP$ is the function $H:\R^n \times \R^m \times \R^n \times \R \times \T \rightarrow \R$ defined by
$H(q,u,p,p^0,t)=\langle p, f(q,u,t) \rangle_{\R^n} + p^0 f^0(q,u,t).$

\begin{theorem}[Pontryagin Maximum Principle]\label{thmmain}
Let $b^* \in \T \backslash \{a\}$. If the trajectory $q^*(\cdot)$, defined on $[a,b^*]_\T$ and associated with a control $u^*\in \L^\infty_{\T}([a,b^*[_\T;\Omega)$, is a solution of $\OCP$,
then there exist $p^0 \leq 0$ and $\psi \in \R^j$, with $(p^0,\psi)\neq (0,0)$, and there exists a mapping $p(\cdot) \in \AC ([a,b^*]_\T,\R^n)$ (called adjoint vector), such that there holds
\begin{equation}\label{extremal_equations}
q^{*\DD}(t) = \frac{\partial H}{\partial p} (q^*(t),u^*(t),p^\sigma(t),p^0,t), \qquad
p^\DD(t) = - \frac{\partial H}{\partial q} (q^*(t),u^*(t),p^\sigma(t),p^0,t) ,
\end{equation}
for $\DD$-a.e.\ $t\in[a,b^*[_\T$.
Moreover, there holds
\begin{equation}\label{eqrscase}
\Big\langle \frac{\partial H}{\partial u} (q^*(r),u^*(r),p^\sigma(r),p^0,r) , v-u^*(r) \Big\rangle_{\R^m} \leq 0 , 
\end{equation}
for every $r \in [a,b^*[_\T \cap \RR$ and every $v \in \Coad ( \DS (u^*(r)) )$, and
\begin{equation}\label{eqrdcase}
H(q^*(s),u^*(s),p^\sigma(s),p^0,s) = \max_{v\in\Omega} H(q^*(s),v,p^\sigma(s),p^0,s) ,
\end{equation}
for $\DD$-a.e.\ $s \in [a,b^*[_\T \cap \SS$.

Besides, one has the transversality conditions on the initial and final adjoint vector
\begin{equation}\label{transv_cond}
p(a) = - \Big( \frac{\partial g}{\partial q_1} (q^*(a),q^*(b^*)) \Big)^{\!\mathrm{T}}  \psi,\qquad
p(b^*) =  \Big( \frac{\partial g}{\partial q_2} (q^*(a),q^*(b^*)) \Big)^{\!\mathrm{T}}  \psi,
\end{equation}
and $-\psi \in \O_\S (g (q^*(a),q^*(b^*)))$.

Furthermore, if the final time $b^*$ is not fixed in $\OCP$, and if additionally $b^*$ belongs to the interior of $\T$ for the topology of $\R$, then
\begin{equation}\label{transv_cond_time}
\max_{v\in\Omega} H(q^*(b^*),v,p^\sigma(b^*),p^0,b^*) = 0,
\end{equation}
and if $H$ is moreover autonomous (that is, does not depend on $t$), then
\begin{equation}\label{eqnullinthamilt}
\int_{[a,b^*[_\T} H(q^*(t),u^*(t),p^\sigma(t),p^0) \, \DD t = 0.
\end{equation}
\end{theorem}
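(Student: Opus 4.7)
The plan is to follow the variational approach via Ekeland's principle, which is robust enough to handle arbitrary time scales. First I would show that the set of admissible controls, endowed with an appropriate $\L^\infty_\T$-topology, is open, so that small perturbations of $u^*$ still produce trajectories well defined on all of $[a,b^*]_\T$ with values close to $q^*$. The key structural difficulty is that at a right-scattered point $r \in \RR$ one cannot shrink a variation interval below $\mu(r) > 0$, so the classical needle-like construction breaks down; this forces the use of two distinct perturbation families, which mirrors the dichotomy in the conclusion of the theorem.

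Next I would introduce the two families. At a right-dense point $s \in \SS$ that is a $\DD$-Lebesgue point of $u^*$, for $y \in \Omega$ and $\beta \in \V^{b^*}_s$ small, define $u_{s,y,\beta}$ to coincide with $u^*$ outside $[s,s+\beta[_\T$ and to equal $y$ there. At a right-scattered point $r \in \RR$, take a convex combination: for $y \in \DS(u^*(r))$ and small $\alpha \in \D(u^*(r),y)$, set $u_{r,y,\alpha}(r) = u^*(r) + \alpha(y - u^*(r))$ and leave $u^*$ unchanged elsewhere. The stability built into $\DS$ is exactly what allows $\alpha$ to be chosen uniformly over a neighbourhood of $u^*(r)$ in $\Omega$, which will be essential in the Ekeland step. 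A Gronwall-type argument based on the integral form of \eqref{DD-CS} gives differentiability of the resulting state in the variation parameter and produces variation vectors $w_{s,y}$, $w_{r,y}$ that solve the linearized $\DD$-Cauchy problem driven by $\partial f/\partial q$. The forcing is $f(q^*(s),y,s) - f(q^*(s),u^*(s),s)$ at a right-dense $s$; at a right-scattered $r$, differentiating $q^\sigma(r) = q(r) + \mu(r) f(q(r),u(r),r)$ in $\alpha$ produces an initial jump $w_{r,y}(\sigma(r)) = \mu(r) \frac{\partial f}{\partial u}(q^*(r),u^*(r),r)(y - u^*(r))$, after which $w_{r,y}$ evolves by the same linearized equation.

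Then I would apply Ekeland's Variational Principle on the complete metric space of admissible pairs $(b,u)$ with the $\L^1$-distance on controls and the usual distance on final times. The penalized functional adds to $C(b,u)$ a term of order $\mathrm{dist}_\S(g(q(a),q(b)))/\sqrt{\varepsilon}$ that enforces the endpoint constraint in the limit. For each $\varepsilon > 0$ this produces an approximate minimizer $(b_\varepsilon,u_\varepsilon)$; testing against the two families of variations yields inequalities of the form $\langle \Psi_\varepsilon, w \rangle + O(\sqrt{\varepsilon}) \geq 0$ with a Lagrange multiplier $\Psi_\varepsilon = (p^0_\varepsilon,\psi_\varepsilon)$ normalized on the unit sphere. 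A compactness argument extracts a limit $(p^0,\psi) \neq (0,0)$, and the adjoint vector $p(\cdot)$ is defined as the solution on $[a,b^*]_\T$ of the backward linear $\DD$-Cauchy problem in \eqref{extremal_equations} with terminal condition \eqref{transv_cond}. The Leibniz formula \eqref{eqleibniz} applied to $\langle p, w \rangle_{\R^n}$ converts each variational inequality into a pointwise condition: at each $r \in \RR$, testing one $y \in \DS(u^*(r))$ at a time gives \eqref{eqrscase} on $\DS(u^*(r))$, and taking the closed convex conic hull extends it to $\Coad(\DS(u^*(r)))$; at $\DD$-almost every $s \in \SS$, the Lebesgue point identity \eqref{eq9999} converts the integrated inequality into the pointwise maximization \eqref{eqrdcase}.

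Finally, the transversality conditions on $p(a)$, $p(b^*)$ and the cone condition $-\psi \in \O_\S(g(q^*(a),q^*(b^*)))$ fall out of differentiating the endpoint penalization and passing to the limit. When $b^*$ is free and lies in $\Int(\T)$, perturbations $b^* + h$ of either sign are admissible, and the first-order condition gives \eqref{transv_cond_time}; in the autonomous case, a standard computation using \eqref{eqleibniz} together with both equations in \eqref{extremal_equations} shows that the integrated Hamiltonian is constant along the extremal, which combined with \eqref{transv_cond_time} yields \eqref{eqnullinthamilt}. The main obstacle I anticipate is securing \emph{uniform} admissibility of the right-scattered perturbations along the Ekeland sequence: the set of usable pairs $(y,\alpha)$ depends on $u^*(r)$, and one must exploit the stability condition in the definition of $\DS$ to obtain a single family of variations valid simultaneously for $u^*$ and for all nearby $u_\varepsilon$, rather than only in the limit.
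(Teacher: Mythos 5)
Your overall architecture --- openness of the admissible set, two families of needle-like variations split between right-dense and right-scattered points, Ekeland's principle on a complete metric space of controls, and propagation of the resulting inequalities by the adjoint vector via the Leibniz formula \eqref{eqleibniz} --- coincides with the paper's, and your closing worry about keeping $y\in\DS(u^*(r))$ usable for every control near $u^*$ along the Ekeland sequence is exactly the role the stable $\Omega$-dense directions play there. Two of your steps, however, would fail as written. The first is the penalized functional: applying Ekeland to $C(b,u)+\mathrm{dist}_\S(g(q(a),q(b)))/\sqrt{\varepsilon}$ is not legitimate, because nothing guarantees that the optimal pair is an approximate minimizer of this functional --- an infeasible control may have a much smaller cost while paying only a moderate penalty, so the infimum can lie far below the value at $u^*$, and Ekeland's principle then returns approximate minimizers unrelated to $u^*$. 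The paper instead minimizes $J^R_\varepsilon(u,\bar q_a)=\bigl(\max(q^0(b)-q^{0*}(b)+\varepsilon,0)^2+d_\S^2(\bar g(\bar q_a,\bar q(b)))\bigr)^{1/2}$, which is nonnegative, strictly positive by optimality of $u^*$, and equal to $\varepsilon$ at the reference pair; only this makes $u^*$ an $\varepsilon$-minimizer and forces $u_\varepsilon\to u^*$. Relatedly, your metric space of pairs $(b,u)$ omits the initial condition: without perturbing $q_a$ you cannot derive the transversality condition on $p(a)$ in \eqref{transv_cond}, so the space must consist of pairs $(u,q_a)$; and the openness of the admissible set must be proved for $\L^1$-small perturbations within an $\L^\infty$-ball, not for $\L^\infty$-small ones, since needle variations are only small in $\L^1$.

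The second, more decisive, gap is your derivation of \eqref{eqnullinthamilt}. You claim that in the autonomous case the usual computation with \eqref{eqleibniz} and \eqref{extremal_equations} shows the Hamiltonian is constant along the extremal. This is false on a general time scale: the chain rule for compositions does not survive $\DD$-differentiation, and the paper's Example \ref{ex000} exhibits an autonomous problem in which the Hamiltonian along the optimal extremal takes the values $1/2$ and $0$ at different points of $\T$. Note that \eqref{eqnullinthamilt} only asserts that the \emph{integral} of the Hamiltonian vanishes, precisely because the integrand need not be constant or zero. The paper obtains it through the affine change of variables $\tilde t=(t-a)/(b-a)$ --- affine reparametrizations being the ones for which the composition rule does hold on time scales --- which converts the free final time into a parameter and reduces \eqref{eqnullinthamilt} to the transversality condition \eqref{condHlambda} of the parametrized PMP. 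You would need to replace your constancy argument by a device of this kind.
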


Theorem \ref{thmmain} is proved in Section \ref{part3}.

\begin{remark}[PMP for optimal control problems with parameters]\label{remPMPparam}
Before proceeding with a series of remarks and comments, we provide a version of the PMP for optimal control problems with parameters.
Let $\Lambda$ be a Banach space. 
We consider the general nonlinear control system with parameters on the time scale $\T$
\begin{equation} \label{DD-CS-lambda}
q^\DD(t) = f (\lambda,q(t),u(t),t),
\end{equation}
where $f: \Lambda \times \R^{n} \times \R^{m} \times \T \rightarrow \R^{n}$ is a continuous function of class $\CC^1$ with respect to its three first variables, and where $u\in\U$ as before. The notion of admissibility is defined as before.
Let $f^0: \Lambda \times \R^n \times \R^m \times \T \rightarrow\R$ be a continuous function of class $\CC^1$ with respect to its three first variables, and $g:\Lambda\times\R^n\times\R^n\rightarrow\R^j$ be a function of class $\CC^1$.

We consider the optimal control problem on $\T$, denoted in short $\OCP^\lambda$, of determining a trajectory $q^*(\cdot)$ defined on $[a,b^*]_\T$, solution of \eqref{DD-CS-lambda} and associated with a control $u^*\in \L^\infty_{\T}([a,b^*[_\T;\Omega)$ and with a parameter $\lambda^*\in\Lambda$, minimizing the cost function
$C(\lambda,b,u) = \int_{[a,b[_\T} f^0 (\lambda, q (\tau), u(\tau), \tau ) \, \DD \tau$
over all possible trajectories $q(\cdot)$ defined on $[a,b]_\T$, solutions of \eqref{DD-CS-lambda} and associated with $\lambda\in\Lambda$ and with an admissible control $u\in  \L^\infty_{\T}([a,b[_\T;\Omega)$, with $b\in\T \backslash \{a\}$, and satisfying $g(\lambda,q(a),q(b))\in \S$. The final time can be fixed or not.

The \textit{Hamiltonian} of $\OCP^\lambda$ is the function $H: \Lambda \times \R^n \times \R^m \times \R^n \times \R \times \T \rightarrow \R$ defined by
$$
H(\lambda,q,u,p,p^0,t) = \langle p, f(\lambda,q,u,t) \rangle_{\R^n} + p^0 f^0(\lambda,q,u,t).
$$

If the trajectory $q^*(\cdot)$, defined on $[a,b^*]_\T$ and associated with a control $u^*\in  \L^\infty_{\T}([a,b^*[_\T;\Omega)$ and with a parameter $\lambda^*\in\Lambda$, is a solution of $\OCP^\lambda$, then all conclusions of Theorem \ref{thmmain} (except \eqref{eqnullinthamilt}) hold, and moreover
\begin{equation}\label{condHlambda}
\int_{[a,b^*[_\T} \frac{\partial H}{\partial \lambda} (\lambda^*,q^*(t),u^*(t),p^\sigma(t),p^0,t) \, \DD t + \Big\langle \frac{\partial g}{\partial \lambda} (\lambda^*,q^*(a),q^*(b^*)) , \psi \Big\rangle_{\R^j} = 0.
\end{equation}
This additional statement is proved as well in Section \ref{part3}.
\end{remark}

\begin{remark}
As is well known, the Lagrange multiplier $(p^0,\psi)$ (and thus the triple $(p(\cdot),p^0,\psi)$) is defined up to a multiplicative scalar. Defining as usual an \textit{extremal} as a quadruple $(q(\cdot),u(\cdot),p(\cdot),p^0)$ solution of the above equations, an extremal is said to be \textit{normal} whenever $p^0\neq 0$ and \textit{abnormal} whenever $p^0=0$. The component $p^0$ corresponds to the Lagrange multiplier associated with the cost function. In the normal case $p^0\neq 0$ it is usual to normalize the Lagrange multiplier so that $p^0=-1$.
Finally, note that the convention $p^0\leq 0$ in the PMP leads to a maximization condition of the Hamiltonian (the convention $p^0\geq 0$ would lead to a minimization condition).
\end{remark}

\begin{remark}
As already mentioned in Remark \ref{rem18h05}, without loss of generality we consider in this article optimal control problems defined with the notion of $\DD$-derivative and $\DD$-integral. These notions are naturally associated with the concepts of right-dense and right-scattered points in the basic properties of calculus (see Section~\ref{section1}).
Therefore, when using a $\DD$-derivative in the definition of $\OCP$ one cannot hope to derive in general, for instance, a maximization condition at left-dense points
(see the counterexample of Remark \ref{footnotelebesguepoints}).
\end{remark}

\begin{remark}
In the classical continuous-time setting, it is well known that the maximized Hamiltonian along the optimal extremal, that is, the function $t\mapsto  \max_{v \in \Omega} H(q^*(t),v,p^\sigma(t),p^0,t)$, is Lipschitzian on $[a,b^*]$, and if the dynamics are autonomous (that is, if $H$ does not depend on $t$) then this function is constant.
Moreover, if the final time is free then the maximized Hamiltonian vanishes at the final time.

In the discrete-time setting and a fortiori in the general time scale setting, none of these properties do hold any more in general (see Examples \ref{ex00} and \ref{ex000} below).
The non constant feature is due in particular to the fact that the usual formula of derivative of a composition does not hold in general time scale calculus.
\end{remark}

\begin{remark}
The PMP is derived here in a general framework. We do not make any particular assumption on the time scale $\T$, and do not assume that the set of control constraints $\Omega$ is convex or compact.
In Section~\ref{section10}, we discuss the strategy of proof of Theorem~\ref{thmmain} and we explain how the generality of the framework led us to choose a method based on a variational principle rather than one based on a fixed-point theorem.

We do not make any convexity assumption on the dynamics $(f,f^0)$. As a consequence, and as is well known in the discrete case (see e.g.\ \cite[p.\ 50--63]{bolt}), at right-scattered points the maximization condition \eqref{eqrdcase} does not hold true in general and must be weakened into \eqref{eqrscase} (see Remark \ref{rem8h22}).
\end{remark}

\begin{remark}\label{remcoad}
The inequality~\eqref{eqrscase}, valuable at right-scattered points, can be written as
$$\frac{\partial H}{\partial u} (q^*(r),u^*(r),p^\sigma(r),p^0,r) \in \O_{\Coad (\DS(u^*(r)))} (u^*(r)).$$

In particular, if $u^*(r) \in \Int (\Omega)$ then
$\frac{\partial H}{\partial u} (q^*(r),u^*(r),p^\sigma(r),p^0,r) = 0$. This equality holds true at every right-scattered point if for instance $\Omega = \R^m$ (and also at right-dense points: this is the context of what is usually referred to as the \textit{weak PMP}, see \cite{hils2} where this weaker result is derived on general time scales for shifted control systems).

If $\Omega$ is convex, since $u^*(r) \in \Omega \subset \Coad (\DS(u^*(r)))$, then there holds in particular
$$\frac{\partial H}{\partial u} (q^*(r),u^*(r),p^\sigma(r),p^0,r) \in \O_{\Omega} (u^*(r)),$$
for every $r \in [a,b[_\T \cap \RR$.

Note that, if the inequality \eqref{eqrscase} is strict then $u^*(r)$ satisfies a local maximization condition on $\Coad ( \DS (u^*(r)) )$ (see also \cite[p.\ 74--75]{bolt}).
\end{remark}

\begin{remark}\label{rem8h22}
In the classical continuous-time case, all points are right-dense and consequently, Theorem \ref{thmmain} generalizes the usual continuous-time PMP where the maximization condition \eqref{eqrdcase} is valid $\mu_L$-almost everywhere (see \cite[Theorem 6 p.\ 67]{pont}).

In the discrete-time setting, the possible failure of the maximization condition is a well known fact (see e.g.\ \cite[p.\ 50--63]{bolt}), and a fortiori in the time scale setting the maximization condition cannot be expected to hold in general at right-scattered points (see counterexamples below).

Many works have been devoted to derive a PMP in the discrete-time setting (see e.g.\ \cite{blot,bolt,cano,Halkin,holt2,holt,mord}). Since the maximization condition cannot be expected to hold true in general for discrete-time optimal control problems, it must be replaced with a weaker condition, of the kind \eqref{eqrscase}, involving the derivative of $H$ with respect to $u$. Such a kind of inequality is provided in \cite[Theorem~42.1 p.\ 330]{bolt} for finite horizon problems and in \cite{blot} for infinite horizon problems. Our condition \eqref{eqrscase} is of a more general nature, as discussed next.
In \cite{holt2,holt,seth} the authors assume \textit{directional convexity}, that is, for all $(v,v') \in \Omega^2$ and every $\theta \in [0,1]$, there exists $v_\theta \in \Omega$ such that
$$ f (q,v_\theta,t) = \theta f (q,v,t) + (1-\theta) f (q,v',t), \quad f^0 (q,v_\theta,t) \leq \theta f^0 (q,v,t) + (1-\theta) f^0 (q,v',t), $$
for every $q \in \R^n$ and every $t \in \T$; and under this assumption they derive the maximization condition in the discrete-time case (see also \cite{cano} and \cite[p.\ 235]{seth}). 
Note that this assumption is satisfied whenever $\Omega$ is convex, the dynamics $f$ is affine with respect to $u$, and $f^0$ is convex in $u$ (which implies that $H$ is concave in $u$).
We refer also to \cite{mord} where it is shown that, in the absence of such convexity assumptions, an approximate maximization condition can however be derived.

Note that, under additional assumptions, \eqref{eqrscase} implies the maximization condition. More precisely, let $r \in [a,b[_\T \cap \RR$ and let $(q^*(\cdot),u^*(\cdot),p(\cdot),p^0)$ be the optimal extremal of Theorem \ref{thmmain}. Let $r \in [a,b[_\T \cap \RR$.
If the function $u\mapsto H(q^*(r), u , p^\sigma(r),p^0,r )$ is concave on $\R^m$, then the inequality \eqref{eqrscase} implies that
$$
H(q^*(r), u^*(r) , p^\sigma(r),p^0,r ) = \max_{v \in \Coad (\DS(u^*(r)))} H(q^*(r), v , p^\sigma(r),p^0,r ) .
$$
If moreover $\Omega \subset \Coad (\DS(u^*(r)))$ (this is the case if $\Omega$ is convex), since $u^* (r) \in \Omega$, it follows that
$$
H(q^*(r), u^*(r) , p^\sigma(r),p^0,r ) = \max_{v \in \Omega} H(q^*(r), v , p^\sigma(r),p^0,r ) .
$$
Therefore, in particular, if $H$ is concave in $u$ and $\Omega$ is convex then the maximization condition holds as well at every right-scattered point.
\end{remark}

\begin{remark}\label{remihp}
It is interesting to note that, if $H$ is convex in $u$ then a certain minimization condition can be derived at every right-scattered point, as follows.

For every $v \in \Omega$, let $\Opp (v) = \{ 2v-v'\ \vert\ v' \in \Coad (\DS (v)) \}$ denote the symmetric of $\Coad (\DS (v))$ with respect to the center $v$.
It obviously follows from \eqref{eqrscase} that
\begin{equation}\label{eqrscase2}
\Big\langle \frac{\partial H}{\partial u} (q^*(r),u^*(r),p^\sigma(r),p^0,r) , v-u^*(r) \Big\rangle_{\R^m} \geq 0,  
\end{equation}
for every $r \in [a,b[_\T \cap \RR$ and every $v \in \Opp(u^*(r))$.
If $H$ is convex in $u$ on $\R^m$, then the inequality \eqref{eqrscase2} implies that
\begin{equation}\label{hammin}
H(q^*(r), u^*(r) , p^\sigma(r),p^0,r ) = \min_{v \in \Opp (u^*(r))} H(q^*(r), v , p^\sigma(r),p^0,r ) 
\end{equation}
\end{remark}

We next provide several very simple examples illustrating the previous remarks.

\begin{example}\label{ex00}
Here we give a counterexample showing that, although the final time is not fixed, the maximized Hamiltonian may not vanish.

Set $\T = \N$, $n=m=1$, $f (q,u,t) = u$, $ f^0(q,u,t) = 1$, $\Omega = [0,1]$, $j=2$, $g(q_1,q_2) = (q_1,q_2)$ and $\S = \{ 0 \} \times \{ 3/2 \}$. The corresponding optimal control problem is the problem of steering the discrete-time control one-dimensional system $q(n+1)=q(n)+u(n)$ from $q(0)=0$ to $q(b)=3/2$ in minimal time, with control constraints $0\leq u(k)\leq 1$.
It is clear that the minimal time is $b^*=2$, and that any control $u$ such that $0\leq u(0)\leq 1$, $0\leq u(1)\leq 1$, and $u(0)+u(1) = 3/2$, is optimal.

Among these optimal controls, consider $u^*$ defined by $u^*(0) = 1/2$ and $u^*(1) = 1$. 
Consider $\psi$, $p^0\leq 0$ and $p(\cdot)$ the adjoint vector whose existence is asserted by the PMP.
Since $u^*(0) \in \Int (\Omega)$, it follows from \eqref{eqrscase} that $p(1) = 0$.
The Hamiltonian is $H(q,u,p,p^0,t) = p u + p^0$, and since it is independent of $q$, it follows that $p(\cdot)$ is constant and thus equal to $0$. In particular, $p(0) = p(2) = 0$ and hence $\psi = 0$. From the nontriviality condition $(p^0,\psi)\neq (0,0)$ we infer that $p^0\neq 0$. Therefore the maximized Hamiltonian at the final time is here equal to $p^0$ and thus is not equal to $0$.
\end{example}

\begin{example}\label{ex0}
Here we give a counterexample (in the spirit of \cite[Examples 10.1-10.4 p.\ 59--62]{bolt}) showing the failure of the maximization condition at right-scattered points. 

Set $\T = \{ 0,1,2 \}$, $n=m=1$, $ f (q,u,t) = u-q$, $f^0(q,u,t) = 2q^2-u^2$, $\Omega = [0,1]$, $j=1$, $g(q_1,q_2) = q_1$ and $\S = \{ 0 \}$. 
Any solution of the resulting control system is such that $q(0)=0$, $q(1)=u(0)$, $q(2)=u(1)$, and its cost is equal to $u(0)^2 - u(1)^2$. It follows that the optimal control $u^*$ is unique and is such that $u^*(0) = 0$ and $u^*(1) = 1$. 
The Hamiltonian is $H(q,u,p,p^0,t) = p (u-q) + p^0 (2q^2 -u^2)$.
Consider $\psi$, $p^0\leq 0$ and $p(\cdot)$ the adjoint vector whose existence is asserted by the PMP.
Since $g$ does not depend on $q_2$, it follows that $p(2)=0$, and from the extremal equations we infer that $p(1)=0$ and $p(0)=0$. Therefore $\psi = 0$ and hence $p^0 \neq 0$ (nontriviality condition) and we can assume that $p^0=-1$.
It follows that the maximized Hamiltonian is equal to $-p^0=1$ at $r=0,1,2$, whereas
$H(q^*(0),u^*(0),p(1),p^0,0)= 0$.
In particular, the maximization condition \eqref{eqrdcase} is not satisfied at $r = 0 \in \RR$ (note that it is however satisfied at $r=1$).

Note that, in accordance with the fact that $H$ is convex in $u$ and $\Opp(u^*(0)) = ]-\infty,0]$ and $\Opp(u^*(1)) = [1,+\infty[$, the minimization condition \eqref{hammin} is indeed satisfied (see Remark \ref{remihp}).
\end{example}

\begin{example}\label{ex000}
Here we give a counterexample in which, although the Hamiltonian is autonomous (independent of $t$), the maximized Hamiltonian is not constant over $\T$.

Set $\T = \{ 0,1,2 \}$, $n=m=1$, $f(q,u,t)=u-q$, $f^0(q,u,t) = (u^2-q^2)/2$, $j=1$, $g(q_1,q_2) = q_1$, $\S = \{ 1 \}$, $\Omega = [0,1]$ and $b=2$. 
Any solution of the resulting control system is such that $q(0)=1$, $q(1)=u(0)$, $q(2)=u(1)$, and its cost is equal to $(u(1)^2 - 1)/ 2$. It follows that any control $u$ such that $u(1) = 0$ is optimal (the value of $u(0)$ is arbitrary). Consider the optimal control $u^*$ defined by $u^*(0) = u^*(1) = 0$, and let $q^*(\cdot)$ be the corresponding trajectory. Then $q^*(0)=1$ and $q^*(1)=q^*(2)=0$.
The Hamiltonian is $H(q,u,p,p^0,t) = p (u-q) + p^0 (u^2 -q^2)/2$.
Consider $\psi$, $p^0\leq 0$ and $p(\cdot)$ the adjoint vector whose existence is asserted by the PMP.
Since $g$ does not depend on $q_2$, it follows that $p(2)=0$, and from the extremal equations we infer that $p(1)=0$ and $p(0)=-p^0$. In particular, from the nontriviality condition one has $p^0\neq 0$ and we can assume that $p^0=-1$.
Therefore $H(q^*(0),v,p(1),p^0,0)= 1/2-v^2 $ and $H(q^*(1),v,p(2),p^0,1)= - v^2 /2$, and it easily follows that that the maximization condition holds at $r=0$ and $r=1$. This is in accordance with the fact that $H$ is concave in $u$ and $\Omega$ is convex.
Moreover, the maximized Hamiltonian is equal to $1/2$ at $r=0$, and to $0$ at $r=1$ and $r=2$.
\end{example}

\section{Proof of the main result}\label{part3}
\subsection{Preliminary comments}\label{section10}
There exist several proofs of the continuous-time PMP in the literature. Mainly they can be classified as variants of two different approaches: the first of which consists of using a fixed point argument, and the second consists of using Ekeland's Variational Principle.

More precisely, the classical (and historical) proof of \cite{pont} relies on the use of the so-called needle-like variations combined with a fixed point Brouwer argument (see also \cite{hest,lee}). There exist variants, relying on the use of a conic version of the Implicit Function Theorem (see \cite{agrach} or \cite{HaberkornTrelat,trel3}), the proof of which being however based on a fixed point argument. The proof of \cite{bres} uses a separation theorem (Hahn-Banach arguments) for cones combined with the Brouwer fixed point theorem. We could cite many other variants, all of them relying, at some step, on a fixed point argument.

The proof of \cite{ekel} is of a different nature and follows from the combination of needle-like variations with Ekeland's Variational Principle. It does not rely on a fixed point argument. By the way note that this proof leads as well to an approximate PMP (see \cite{ekel}), and withstands generalizations to the infinite dimensional setting (see e.g.\ \cite{liyo})

Note that, in all cases, needle-like variations are used to generate the so-called Pontryagin cone, serving as a first-order convex approximation of the reachable set.
The adjoint vector is then constructed by propagating backward in time a Lagrange multiplier which is normal to this cone.
Roughly, needle-like variations are kinds of perturbations of the reference control in $\L^1$ topology (perturbations with arbitrary values, over small intervals of time) which generate perturbations of the trajectories in $\CC^0$ topology.

Due to obvious topological obstructions, it is evident that the classical strategy of needle-like variations combined with a fixed point argument cannot hold in general in the time scale setting. At least one should distinguish between dense points and scattered points of $\T$. But even this distinction is not sufficient.
Indeed, when applying the Brouwer fixed point Theorem to the mapping built on needle-like variations (see \cite{lee,pont}), it appears to be crucial that the domain of this mapping be convex. Roughly speaking, this domain consists of the product of the intervals of the spikes (intervals of perturbation). This requirement obviously excludes the scattered points of a time scale (which have anyway to be treated in another way), but even at some right-dense point $s \in \SS$, there does not necessarily exist $\varepsilon>0$ such that $[s,s+\varepsilon]\subset\T$.
At such a point we can only ensure that $0$ is not isolated in the set $\{ \beta \geq 0\ \vert\ s+\beta \in \T \}$. In our opinion this basic obstruction makes impossible the use of a fixed point argument in order to derive the PMP on a general time scale.
Of course to overcome this difficulty one can assume that the $\mu_\DD$-measure of right-dense points not admitting a right interval included in $\T$ is zero. This assumption is however not very natural and would rule out time scales such as a generalized Cantor set having a positive $\mu_L$-measure.
Another serious difficulty that we are faced with on a general time scale is the technical fact that the formula \eqref{eq9999}, accounting for Lebesgue points, is valid only for $\beta$ such that $s+ \beta \in \T$. Actually if $s+ \beta \notin \T$ then \eqref{eq9999} is not true any more in general (it is very easy to construct a time scale $\T$ for which \eqref{eq9999} fails whenever $s+ \beta \notin \T$, even with $q=1$). Note that the concept of Lebesgue point is instrumental in the classical proof of the PMP in order to ensure that the needle-like variations can be built at different times\footnote{More precisely, what is used in the approximate continuity property (see e.g.\ \cite{evan}).} (see \cite{lee,pont}). 
On a general time scale this technical point would raise a serious issue\footnote{We are actually able to overcome this difficulty by considering multiple variations at right-scattered points, however this requires to assume that the set $\Omega$ is locally convex. The proof that we present further does not require such an assumption.}.

The proof of the PMP that we provide in this article is based on Ekeland's Variational Principle, which permits to avoid the above obstructions and happens to be well adapted for the proof of a general PMP on time scales. It requires however the treatment of other kinds of technicalities, one of them being the concept of stable $\Omega$-dense direction that we were led to introduce. Another point is that Ekeland's Variational Principle requires a complete metric space, which has led us to assume that $\Omega$ is closed (see Footnote \ref{footnote_Omegaclosed}).

\begin{remark}\label{remZhan}
Recall that a weak PMP (see Remark \ref{remcoad}) on time scales is proved in \cite{hils2} for shifted optimal control problems (see also \cite{hils1}). A similar result can be derived in an analogous way for the non shifted optimal control problems \eqref{DD-CS} considered here.
Since then, deriving the (strong) PMP on time scales was an open problem. 
While we were working on the contents of the present article (together with the companion paper \cite{bour10}), at some step we discovered the publication of the article \cite{zhan2}, in which the authors claim to have obtained a general version of the PMP. As in our work, their approach is based on Ekeland's Variational Principle.
However, as already mentioned in the introduction, many arguments thereof are erroneous, and we believe that their errors cannot be corrected easily.

Although it is not our aim to be involved in controversy, since we were incidentally working in parallel on the same subject (deriving the PMP on time scales), we provide hereafter some evidence of the serious mistakes contained in \cite{zhan2}.

Note that, in order to derive a maximization condition $\DD$-almost everywhere (even at right-scattered points), as in \cite{holt2,holt,seth} the authors of \cite{zhan2} assume directional convexity of the dynamics (see Remark \ref{rem8h22} for the definition).

A first serious mistake in \cite{zhan2} is the fact that, in the application of Ekeland's Variational Principle, the authors use two different distances, depending on the nature of the point of $\T$ under consideration (right-scattered or dense).
As is usual in the proof of the PMP by Ekeland's Principle, the authors deduce from considerations on sequences of perturbation controls $u^\varepsilon$ the existence of Lagrange multipliers $\varphi_0$ and $\psi_0$; the problem is that these multipliers are built separately for right-scattered and dense points (see \cite[(32), (43), (52), (60)]{zhan2}), and thus are different in general since the distances used are different. Since the differential equation of the adjoint vector $\psi$ depends on these multipliers, the existence of the adjoint vector in the main result \cite[Theorem 3.1]{zhan2} cannot be established.

A second serious mistake is in the use of the directional convexity assumption (see \cite[Equations (35), (36), (43)]{zhan2}). The first equality in (35) can obviously fail: the term $u^\varepsilon_{\omega,\lambda} (\tau)$ is a convex combination of $u^\varepsilon (\tau)$ and  $u_{\mu(\tau)}^\varepsilon$ since $\V(\tau)$ is assumed to be convex, but the parameter of this convex combination is not necessarily equal to $\lambda$ as claimed by the authors (unless $f$ is affine in $u$ and $f^0$ is convex in $u$, but this restrictive assumption is not made).
The nasty consequence of this error is that, in (43), the limit as $\lambda$ tends to $0$ is not valid.

A third mistake is in \cite[(57), (60), (62)]{zhan2}, when the authors claim that the rest of the proof can be led for dense points similarly as for right-scattered points. They pass to the limit in (60) as $\varepsilon$ tends to $0$ and get that $V^\varepsilon (b)$ tends to $V(b)$, where $V^\varepsilon$ is defined by (57) and $V$ is defined similarly. However, this does not hold true. Indeed, even though $d^*_\DD (u^\varepsilon,u^*)$ (Ekeland's distance) tends to $0$, there is no guarantee that $u^\varepsilon (\tau)$ tends to $u^*(\tau)$.

The above mistakes are major and cannot be corrected even through a major revision of the overall proof, due to evident obstructions. There are many other minor ones along the paper (which can be corrected, although some of them require a substantial work), such as:
the $\DD$-measurability of the map $\V$ is not proved;
in (45) the authors should consider subsequences and not a global limit;
in (55), any arbitrary $\rho>0$ cannot be considered to deal with the $\DD$-Lebesgue point $\tau$, but only with $\tau-\rho \in \T$ (recall that the equality \eqref{eq9999} of our paper is valid only if $s+\beta \in \T$, and that, as already mentioned, on a general time scale Lebesgue points must be handled with some special care).

In view of these numerous issues, it cannot be considered that the PMP has been proved in \cite{zhan2}. The aim of the present article (whose work was initiated far before we discovered the publication \cite{zhan2}) is to fill a gap in the literature and to derive a general strong version of the PMP on time scales.
Finally, it can be noted that the authors of \cite{zhan2} make restrictive assumptions: their set $\Omega$ is convex and is compact at scattered points, their dynamics are globally Lipschitzian and directionally convex, and they consider optimal control problems with fixed final time and fixed initial and final points. In the present article we go far beyond these unnecessary and not natural requirements, as already explained throughout.
\end{remark}

\subsection{Needle-like variations of admissible controls}\label{section3}
Let $b \in \T \backslash \{a\}$.
Following the definition of an admissible control (see Definition \ref{defadm} in Section \ref{section2}), we denote by $\UU$ the set of all $(u,q_a)\in\U\times\R^n$ such that $u$ is an admissible control on $[a,b]_\T$ associated with the initial condition $q_a$. It is endowed with the distance
\begin{equation}\label{def_dUU}
d_{\UU} ((u,q_a), (u',q'_a)) = \Vert u - u' \Vert_{\L^1_\T ([a,b[_\T,\R^{m})} + \Vert q_a - q'_a \Vert_{\R^n}.
\end{equation}

Throughout the section, we consider $(u,q_a) \in \UU$ with $u \in \L^\infty_\T ([a,b[_\T;\Omega)$ and the corresponding solution $q(\cdot,u,q_a)$ of \eqref{DD-CS} with $q(a)=q_a$. 
This section \ref{section3} is devoted to define appropriate variations of $(u,q_a)$, instrumental in order to prove the PMP.
We present some preliminary topological results in Section \ref{section30}. Then we define needle-like variations of $u$ in Sections \ref{section32} and \ref{section33}, respectively at a right-scattered point and at a right-dense point and derive some useful properties. Finally in Section \ref{section34} we make some variations of the initial condition $q_a$.

\subsubsection{Preliminaries}\label{section30}
In the first lemma below, we prove that $\UU$ is open. Actually we prove a stronger result, by showing that $\UU$ contains a neighborhood of any of its point in $\L^1$ topology, which will be useful in order to define needle-like variations.

\begin{lemma}\label{prop30-1}
Let $R > \Vert u \Vert_{\L^\infty_\T([a,b[_\T,\R^m)}$. 
There exist $\nu_R > 0$ and $\eta_R > 0$ such that the set
\begin{equation*}
\begin{split}
\E(u,q_a,R) = \{ (u',q'_a) \in \U\times\R^n   \ \vert \  & \Vert u'-u \Vert_{\L^1_\T([a,b[_\T,\R^m)} \leq \nu_R, \\ & \Vert u' \Vert_{\L^\infty_\T([a,b[_\T,\R^m)} \leq R, \; \Vert q'_a - q_a \Vert_{\R^n} \leq \eta_R \}
\end{split}
\end{equation*}
is contained in $\UU$.
\end{lemma}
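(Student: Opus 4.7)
My plan is to prove the lemma by a continuous-dependence argument combined with Gronwall's inequality on time scales, relying on the Cauchy-Lipschitz theorem from \cite{bour10} to rule out blow-up. First I would observe that the reference trajectory $q(\cdot,u,q_a)$ is continuous, hence bounded, on the compact set $[a,b]_\T$; let $K$ be such that $\|q(t,u,q_a)\|_{\R^n}\leq K$ for all $t\in[a,b]_\T$. Since $f$ is continuous on $\R^n\times\R^m\times\T$ and of class $\CC^1$ in its two first variables, it is bounded by some $M_R$ and Lipschitz in $(q,u)$ with some constant $L_R$ on the compact cylinder $\overline{B}(0,K+1)\times \overline{B}(0,R)\times[a,b]_\T$.

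Next, fix $(u',q_a')$ in the candidate set $\E(u,q_a,R)$ and let $q'(\cdot)=q(\cdot,u',q_a')$ be the maximal solution of \eqref{DD-CS} with $q'(a)=q_a'$, defined on $I_\T(u',q_a')$. On every subinterval $[a,t]_\T\subset I_\T(u',q_a')$ on which $q'$ stays in the open ball of radius $K+1$, the integral formulation of the dynamics and Lipschitz property of $f$ yield
\[
\|q'(t)-q(t,u,q_a)\|_{\R^n} \leq \|q_a'-q_a\|_{\R^n} + L_R\!\int_{[a,t[_\T}\!\|u'(\tau)-u(\tau)\|_{\R^m}\,\DD\tau + L_R\!\int_{[a,t[_\T}\!\|q'(\tau)-q(\tau,u,q_a)\|_{\R^n}\,\DD\tau.
\]
Applying the time-scale Gronwall inequality (available in \cite{bohn}) gives the estimate
\[
\|q'(t)-q(t,u,q_a)\|_{\R^n} \leq \bigl(\eta_R + L_R\,\nu_R\bigr)\,\Phi_R,
\]
where $\Phi_R$ depends only on $L_R$ and $b-a$. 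Choosing $\nu_R$ and $\eta_R$ small enough so that $(\eta_R+L_R\nu_R)\Phi_R<1$ ensures that $q'(t)\in B(q(t,u,q_a),1)\subset B(0,K+1)$ throughout its domain of existence inside $[a,b]_\T$.

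Finally, I would invoke the alternative stated after Definition \ref{defadm}, drawn from \cite[Theorem~2]{bour10}: either $q'$ is defined on all of $\T$, or $I_\T(u',q_a')=[a,c[_\T$ with $c$ left-dense and $q'$ unbounded on this interval. The uniform bound obtained above rules out the second option on $[a,b]_\T$, so $[a,b]_\T\subset I_\T(u',q_a')$, which exactly means that $u'$ is admissible on $[a,b[_\T$ for the initial condition $q_a'$, i.e.\ $(u',q_a')\in\UU$. The main technical point I expect to have to handle carefully is the bootstrap aspect of the Gronwall argument, namely ensuring that the set of times $t\in[a,b]_\T\cap I_\T(u',q_a')$ for which $q'$ remains in $\overline{B}(0,K+1)$ is both open and closed in $[a,b]_\T\cap I_\T(u',q_a')$, which uses continuity of $q'$ together with the strict inequality produced by the choice of $\nu_R,\eta_R$; the other subtlety is invoking a Gronwall-type lemma valid on a general time scale with an $\L^1_\T$ source term rather than a continuous one, which is standard but must be cited precisely from \cite{bohn} or \cite{bour10}.
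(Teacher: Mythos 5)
Your proposal follows essentially the same route as the paper's proof: a Lipschitz bound for $f$ on a compact neighbourhood of the reference trajectory, the time-scale Gronwall inequality (Lemma~\ref{lemgronwall}) giving the estimate $(\eta_R+L\nu_R)e_L(b,a)<1$, and the blow-up alternative of \cite[Theorem~2]{bour10} to conclude $b\in I_\T(u',q'_a)$. The only substantive difference is the bootstrap mechanism, and there your plan has a flaw: you propose to show that the set of times where $q'$ stays in the tube is open and closed in $[a,b]_\T\cap I_\T(u',q'_a)$ and conclude by connectedness, but an interval of a time scale is in general \emph{not} connected (e.g.\ $\T=\{0\}\cup[1,2]$), so "open and closed" does not imply "everything". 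The paper instead argues by contradiction on $t_1=\inf A$, where $A$ is the set of times at which the deviation exceeds $1$, distinguishing whether $t_1$ is attained (then the strict Gronwall bound $<1$ on $[a,t_1]_\T$ contradicts $\Vert q'(t_1)-q(t_1)\Vert>1$) or is a right-dense limit (then continuity gives $\geq 1$, again contradicted). Your estimates are exactly the ones needed; you only need to replace the connectedness step by this infimum argument, which your strict inequality already supports.
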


Before proving this lemma, let us recall a time scale version of Gronwall's Lemma (see \cite[Chapter 6.1]{bohn}). 
The generalized exponential function is defined by
$e_L (t,c) = \exp ( \int_{[c,t[_\T} \xi_{\mu (\tau)} (L) \; \DD \tau )$,
for every $L \geq 0$, every $c \in \T$ and every $t \in [c,+\infty[_\T$,
where
$\xi_{\mu (\tau)} (L) = \log ( 1+ L \mu (\tau)) / \mu (\tau)$ whenever $\mu (\tau) > 0$, and $\xi_{\mu (\tau)} (L) = L$ whenever $\mu (\tau) =0$
(see \cite[Chapter 2.2]{bohn}). Note that, for every $L \geq 0$ and every $c \in \T$, the function $e_L (\cdot,c)$ (resp. $e_L (c,\cdot)$) is positive and increasing on $[c,+\infty[_\T$ (resp. positive and decreasing on $[a,c]_\T$), and moreover there holds $e_L (t_2,t_1) e_L (t_1,c) = e_L (t_2,c)$, for every $L \geq 0$ and all $(c, t_1, t_2)\in \T^3$ such that $c \leq t_1 \leq t_2$.

\begin{lemma}[\cite{bohn}]\label{lemgronwall}
Let $(c, d)\in\T^2$ such that $c < d$, let $L_1$ and $L_2$ be nonnegative real numbers, and let $q \in \CC ([c,d]_\T, \R )$ satisfying
$0 \leq q(t) \leq L_1 + L_2  \int_{[c,t[_\T} q(\tau) \,\DD \tau$,
for every $t \in [c,d]_\T$.
Then $0 \leq q(t) \leq L_1 e_{L_2} (t,c)$, for every $t \in [c,d]_\T$.
\end{lemma}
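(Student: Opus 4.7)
The plan is to reduce the problem to an integral-type comparison and then use iteration. First, I would introduce the auxiliary function
\begin{equation*}
\phi(t) = L_1 + L_2 \int_{[c,t[_\T} q(\tau)\,\DD\tau, \qquad t \in [c,d]_\T .
\end{equation*}
By the absolute-continuity results recalled just before this lemma, $\phi \in \AC([c,d]_\T,\R)$, $\phi(c) = L_1$, and $\phi^\DD(t) = L_2\, q(t)$ for $\DD$-a.e.\ $t \in [c,d[_\T$. The hypothesis of the lemma rewrites as $0 \leq q(t) \leq \phi(t)$ on $[c,d]_\T$, so it suffices to prove that $\phi(t) \leq L_1 e_{L_2}(t,c)$ on $[c,d]_\T$; the conclusion for $q$ follows immediately.

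For the bound on $\phi$, I would proceed by Picard-type iteration directly on the inequality $q(t) \leq L_1 + L_2 \int_{[c,t[_\T} q(\tau)\,\DD\tau$. Substituting this inequality into itself $n$ times and using Fubini on $\T$ yields, in terms of the generalized time-scale monomials $h_0 \equiv 1$ and $h_{k+1}(t,c) = \int_{[c,t[_\T} h_k(\tau,c)\,\DD\tau$,
\begin{equation*}
q(t) \leq L_1 \sum_{k=0}^{n-1} L_2^k\, h_k(t,c) \;+\; L_2^n \int_{[c,t[_\T}\!\!\cdots\!\int_{[c,\tau_{n-1}[_\T} q(\tau_n)\,\DD\tau_n\cdots\DD\tau_1 .
\end{equation*}
Since $q$ is continuous on the compact set $[c,d]_\T$, it is bounded by some $M$, and an easy induction on the time scale gives $h_k(t,c) \leq (t-c)^k/k!$; hence the remainder term is dominated by $M (L_2(d-c))^n / n!$, which tends to $0$ as $n \to +\infty$.

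It then remains to identify the limit of the partial sums. For $L_2 \geq 0$ one has $1 + L_2 \mu(\tau) > 0$, so the generalized exponential $e_{L_2}(\cdot,c)$ is well defined, and the Taylor series expansion of the time-scale exponential (see \cite[Chapter~2]{bohn}) yields $\sum_{k=0}^{\infty} L_2^k h_k(t,c) = e_{L_2}(t,c)$ uniformly on $[c,d]_\T$. Passing to the limit $n\to +\infty$ in the iterated inequality therefore gives $q(t) \leq L_1 e_{L_2}(t,c)$ on $[c,d]_\T$, which is the desired conclusion. The one step requiring care is the identification of the series with $e_{L_2}(\cdot,c)$; if one prefers to avoid quoting the Taylor expansion, the same conclusion can be obtained by verifying that the partial sum $S_n(t) = L_1\sum_{k=0}^{n} L_2^k h_k(t,c)$ satisfies $S_n^\DD \to L_2\, S_\infty$ and $S_n(c) = L_1$, so that $S_\infty$ solves the Cauchy problem $y^\DD = L_2 y$, $y(c)=L_1$, uniquely solved by $L_1 e_{L_2}(\cdot,c)$.
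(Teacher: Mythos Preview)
The paper does not prove this lemma; it is quoted from \cite{bohn} without proof. Your argument is essentially correct: the Picard iteration produces the series in the time-scale monomials $h_k$, the bound $h_k(t,c)\le (t-c)^k/k!$ holds by induction (using that the $\DD$-integral over $[c,t[_\T$ of the nondecreasing function $\tau\mapsto(\tau-c)^k$ is dominated by the ordinary Lebesgue integral over $[c,t]$), and the identification $\sum_{k\ge 0} L_2^k h_k(\cdot,c)=e_{L_2}(\cdot,c)$ follows from uniqueness for the linear $\DD$-Cauchy problem $y^\DD=L_2 y$, $y(c)=1$, which applies since $1+L_2\mu(t)>0$. A minor remark: the auxiliary function $\phi$ you introduce in the first paragraph is never actually used; you bound $q$ directly.

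For comparison, the standard proof in \cite{bohn} is shorter and avoids series altogether: with $y(t)=L_1+L_2\int_{[c,t[_\T}q\,\DD\tau$ one has $y^\DD=L_2 q\le L_2 y$, and the product rule gives $[y\, e_{\ominus L_2}(\cdot,c)]^\DD\le 0$, hence $y(t)\le y(c)\,e_{L_2}(t,c)=L_1 e_{L_2}(t,c)$ and $q\le y$ finishes. That route uses the algebraic machinery of $\ominus$ and the product rule for $e_p$; yours is more elementary in that respect but trades this for having to justify the series representation of $e_{L_2}$. Your alternative via uniqueness of the IVP is the clean way to close that step, and you should make the passage to the limit explicit: from $S_n(t)=L_1+\int_{[c,t[_\T}S_n^\DD\,\DD\tau$ and uniform convergence of $S_n^\DD=L_2 S_{n-1}\to L_2 S_\infty$, conclude $S_\infty(t)=L_1+\int_{[c,t[_\T}L_2 S_\infty\,\DD\tau$, hence $S_\infty=L_1 e_{L_2}(\cdot,c)$.
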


\begin{proof}[Proof of Lemma \ref{prop30-1}]
Let $R > \Vert u \Vert_{\L^\infty_\T([a,b[_\T,\R^m)}$. By continuity of $q(\cdot,u,q_a)$ on $[a,b]_\T$, the set
$$
K = \{ (x,v,t) \in \R^{n} \times \B_{\R^m}(0,R) \times [a,b]_\T\ \vert \ \Vert x-q(t,u,q_a) \Vert_{\R^n} \leq 1 \}
$$
is a compact subset of $\R^n \times \R^m \times \T$. Therefore $ \Vert \partial f / \partial q \Vert $ and $ \Vert \partial f / \partial u \Vert $ are bounded by some $L \geq 0$ on $K$ and moreover $L$ is chosen such that
\begin{equation}\label{eqlipK}
\Vert f(x_1,v_1,t) - f(x_2,v_2,t) \Vert_{\R^n}  \leq L ( \Vert x_1 -x_2 \Vert_{\R^n}  + \Vert v_1 -v_2 \Vert_{\R^m}  ),
\end{equation}
for all $(x_1,v_1,t)$ and $(x_2,v_2,t)$ in $K$. 
Let $\nu_R >0$ and $0 < \eta_R <1$ such that $ (\eta_R + \nu_R L) e_L (b,a) < 1$. Note that $K$, $L$, $\nu_R$ and $\eta_R$ depend on $ (u,q_a,R)$.

Let $(u',q'_a) \in \E(u,q_a,R)$. We denote by $I'_\T$ the interval of definition of $q(\cdot,u',q'_a)$ satisfying $a \in I'_\T$ and $I'_\T \backslash \{a \} \neq \emptyset$. It suffices to prove that $b \in I'_\T$. 
By contradiction, assume that the set $A = \{ t \in I'_\T \cap [a,b]_\T\ \vert \ \Vert q(t,u',q'_a) - q(t,u,q_a) \Vert_{\R^n}  > 1 \}$ is not empty and set $t_1 = \inf A$. Since $\T$ is closed, $t_1 \in I'_\T \cap [a,b]_\T$ and $[a,t_1]_\T \subset I'_\T \cap [a,b]_\T$. If $t_1$ is a minimum then $ \Vert q(t_1,u',q'_a) - q(t_1,u,q_a) \Vert_{\R^n} > 1$. If $t_1$ is not a minimum then $t_1 \in \SS$ and by continuity we have $ \Vert q(t_1,u',q'_a) - q(t_1,u,q_a) \Vert_{\R^n}  \geq 1$. Moreover there holds $t_1 > a$ since $\Vert q(a,u',q'_a) - q(a,u,q_a) \Vert_{\R^n}  = \Vert q'_a - q_a \Vert_{\R^n}  \leq \eta_R < 1$. Hence $\Vert q(\tau,u',q'_a) - q(\tau,u,q_a) \Vert_{\R^n}  \leq 1$ for every $\tau \in [a,t_1[_\T$. Therefore $(q(\tau,u',q'_a),u'(\tau),\tau)$ and $(q(\tau,u,q_a),u(\tau),\tau)$ are elements of $K$ for $\DD$-a.e.\ $\tau \in [a,t_1[_\T$.
Since there holds
$$
q(t,u',q'_a) - q(t,u,q_a) = q'_a - q_a  + \int_{[a,t[_\T} ( f(q(\tau,u',q'_a),u'(\tau),\tau) - f(q(\tau,u,q_a),u(\tau),\tau) ) \, \DD \tau ,
$$
for every $t \in I'_\T \cap [a,b]_\T$, it follows from \eqref{eqlipK} and from Lemma \ref{lemgronwall} that, for every $t \in [a,t_1]_\T$,
\begin{equation*}
\begin{split}
\Vert q(t,u',q'_a) - q(t,u,q_a) \Vert_{\R^n}  & \leq 
\Vert q'_a - q_a \Vert_{\R^n}  + L  \int_{[a,t[_\T} \Vert  u'(\tau) - u(\tau) \Vert_{\R^m}  \, \DD \tau \\
& \qquad + L  \int_{[a,t[_\T} \Vert   q(\tau,u',q'_a) - q(\tau,u,q_a) \Vert_{\R^n}  \, \DD \tau  \\
& \leq  (\Vert q'_a - q_a \Vert_{\R^n}  + L \Vert u'- u \Vert_{\L^1_\T([a,b[_\T,\R^m)}) e_L (b,a) \\
& \leq  (\eta_R + \nu_R L) e_L (b,a)  <  1.
\end{split}
\end{equation*}
This raises a contradiction at $t=t_1$. Therefore $A$ is empty and thus $q(\cdot,u',q'_a)$ is bounded on $I'_\T \cap [a,b]_\T$. It follows from \cite[Theorem 2]{bour10} that $b \in I'_\T$, that is, $(u',q'_a) \in \UU$.
\end{proof}

\begin{remark}\label{rmK}
Let $(u',q'_a) \in \E(u,q_a,R)$. With the notations of the above proof, since $I'_\T \cap [a,b]_\T = [a,b]_\T$ and $A$ is empty, we infer that $\Vert q(t,u',q'_a) - q(t,u,q_a) \Vert \leq 1$, for every $t \in [a,b]_\T$.
Therefore $(q(t,u',q'_a),u'(t),t) \in K$ for every $(u',q'_a) \in \E(u,q_a,R)$ and for $\DD$-a.e.\ $t \in [a,b[_\T$.
\end{remark}

\begin{lemma}\label{prop30-1-1}
With the notations of Lemma \ref{prop30-1}, the mapping
\begin{equation*}
\fonction{F_{(u,q_a,R)}}{(\E(u,q_a,R),d_{\UU})}{(\CC([a,b]_\T,\R^{n}),\Vert \cdot \Vert_\infty)}{(u',q'_a)}{q(\cdot,u',q'_a)}
\end{equation*}
 is Lipschitzian.
In particular, for every $(u',q'_a) \in \E(u,q_a,R)$, $q(\cdot,u',q'_a)$ converges uniformly to $q(\cdot,u,q_a)$ on $[a,b]_\T$ when $u'$ tends to $u$ in $\L^1_\T ([a,b[_\T,\R^m)$ and $q'_a $ tends to $q_a$ in $\R^n$.
\end{lemma}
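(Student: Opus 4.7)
The plan is to run essentially the same Gronwall argument as in the proof of Lemma \ref{prop30-1}, but applied between two arbitrary elements $(u',q'_a), (u'',q''_a) \in \E(u,q_a,R)$ rather than between a perturbation and a fixed reference. The key enabling observation is Remark \ref{rmK}: for every element of $\E(u,q_a,R)$ the associated trajectory has its graph inside the compact set $K$ for $\DD$-a.e.\ $t\in[a,b[_\T$. Consequently the local Lipschitz estimate \eqref{eqlipK} can be applied simultaneously along both trajectories, which is the whole point of having built $K$, $L$, $\nu_R$, $\eta_R$ in the previous proof.

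Concretely, I would start from the integral formulation
$$q(t,u',q'_a)-q(t,u'',q''_a) = (q'_a - q''_a) + \int_{[a,t[_\T}\!\!\big(f(q(\tau,u',q'_a),u'(\tau),\tau) - f(q(\tau,u'',q''_a),u''(\tau),\tau)\big)\, \DD\tau,$$
valid for every $t\in[a,b]_\T$ by \cite[Lemma 1]{bour10}. Inserting \eqref{eqlipK} (justified by Remark \ref{rmK}) and bounding the $u'-u''$ contribution by its full $\L^1_\T$-norm yields
$$\|q(t,u',q'_a)-q(t,u'',q''_a)\|_{\R^n} \leq \|q'_a-q''_a\|_{\R^n} + L\,\|u'-u''\|_{\L^1_\T([a,b[_\T,\R^m)} + L\!\int_{[a,t[_\T}\!\!\|q(\tau,u',q'_a)-q(\tau,u'',q''_a)\|_{\R^n}\,\DD\tau.$$
Applying Gronwall's Lemma \ref{lemgronwall} with $L_1 = \|q'_a-q''_a\|_{\R^n} + L\,\|u'-u''\|_{\L^1_\T}$ and $L_2 = L$ gives
$$\|q(\cdot,u',q'_a)-q(\cdot,u'',q''_a)\|_\infty \leq \max(1,L)\, e_L(b,a)\, d_{\UU}\big((u',q'_a),(u'',q''_a)\big),$$
which is the desired Lipschitz inequality with constant $\max(1,L)\,e_L(b,a)$.

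The uniform convergence statement is then automatic: taking $(u'',q''_a)=(u,q_a)$ in the bound above shows that $q(\cdot,u',q'_a)\to q(\cdot,u,q_a)$ in $\CC([a,b]_\T,\R^n)$ whenever $\|u'-u\|_{\L^1_\T}$ and $\|q'_a-q_a\|_{\R^n}$ tend to $0$ (subject to staying inside $\E(u,q_a,R)$, which is ensured once the perturbation is small enough). I do not anticipate any real obstacle here; the only delicate point is the verification that \eqref{eqlipK} legitimately applies along both trajectories at once, which is precisely what Remark \ref{rmK} was recorded for. Everything else is a direct invocation of the time scale Gronwall lemma.
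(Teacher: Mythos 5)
Your proposal is correct and follows essentially the same route as the paper's own proof: both use Remark \ref{rmK} to place the two trajectories inside the compact set $K$, apply the Lipschitz estimate \eqref{eqlipK} to the integral formulation, and conclude with the time scale Gronwall Lemma \ref{lemgronwall}, yielding the Lipschitz constant $(\Vert q''_a - q'_a \Vert_{\R^n} + L \Vert u''-u'\Vert_{\L^1_\T})e_L(b,a) \leq \max(1,L)e_L(b,a)\, d_{\UU}$. Nothing further is needed.
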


\begin{proof}
Let $(u',q'_a)$ and $(u'',q''_a)$ be elements of $\E(u,q_a,R) \subset \UU$. 
It follows from Remark \ref{rmK} that $(q(\tau,u'',q''_a),u''(\tau),\tau)$ and $(q(\tau,u',q'_a),u'(\tau),\tau)$ are elements of $K$ for $\DD$-a.e.\ $\tau \in [a,b[_\T$. Following the same arguments as in the previous proof, it follows from \eqref{eqlipK} and from Lemma \ref{lemgronwall} that, for every $t \in [a,b]_\T$,
\begin{equation*}
\Vert q(t,u'',q''_a) - q(t,u',q'_a) \Vert_{\R^n}  \leq  (\Vert q''_a - q'_a \Vert_{\R^n}  + L \Vert u'' - u' \Vert_{\L^1_\T([a,b[_\T,\R^m)}) e_L (b,a).
\end{equation*}
The lemma follows.
\end{proof}

\subsubsection{Needle-like variation of $u$ at a right-scattered point}\label{section32}
Let $r \in [a,b[_\T \cap \RR$ and let $y \in \D (u(r))$. We define the needle-like variation $\Pi = (r,y)$ of $u$ at the right-scattered point $r$ by
\begin{equation*}
u_\Pi (t,\alpha) = \left\{ \begin{array}{lcl}
u(r) + \alpha (y-u(r)) & \textrm{if} & t=r, \\
u(t) & \textrm{if} & t \neq r .
\end{array} \right.
\end{equation*}
for every $\alpha \in \D(u(r),y)$.
It follows from Section~\ref{sec_topoprelim} that $u_\Pi (\cdot,\alpha) \in \L^\infty_{\T} ([a,b[_\T ; \Omega)$. 

\begin{lemma}\label{lem32-1}
There exists $\alpha_0 > 0$ such that $(u_\Pi (\cdot,\alpha),q_a) \in \UU$, for every $\alpha \in \D(u(r),y) \cap [0,\alpha_0]$.
\end{lemma}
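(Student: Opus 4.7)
The plan is to invoke Lemma \ref{prop30-1} directly: it suffices to show that for all sufficiently small $\alpha \geq 0$ in $\D(u(r),y)$, the pair $(u_\Pi(\cdot,\alpha),q_a)$ lies in the set $\E(u,q_a,R)$ for some fixed $R > \Vert u \Vert_{\L^\infty_\T([a,b[_\T,\R^m)}$, and then conclude via the inclusion $\E(u,q_a,R) \subset \UU$.

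First, I would fix any $R > \Vert u \Vert_{\L^\infty_\T([a,b[_\T,\R^m)} + \Vert y - u(r) \Vert_{\R^m}$ and obtain the associated constants $\nu_R > 0$ and $\eta_R > 0$ from Lemma \ref{prop30-1}. The key point, which makes the right-scattered case easy, is that $u_\Pi(\cdot,\alpha)$ differs from $u$ only at the single point $r \in \RR$, which has positive $\DD$-measure $\mu_\DD(\{r\}) = \mu(r) > 0$. Consequently one has the explicit estimates
\begin{equation*}
\Vert u_\Pi(\cdot,\alpha) - u \Vert_{\L^1_\T([a,b[_\T,\R^m)} = \alpha\, \mu(r)\, \Vert y - u(r) \Vert_{\R^m},
\end{equation*}
and
\begin{equation*}
\Vert u_\Pi(\cdot,\alpha) \Vert_{\L^\infty_\T([a,b[_\T,\R^m)} \leq \Vert u \Vert_{\L^\infty_\T([a,b[_\T,\R^m)} + \alpha \Vert y - u(r) \Vert_{\R^m},
\end{equation*}
both of which are controlled linearly in $\alpha$.

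Next, I would choose
\begin{equation*}
\alpha_0 = \min\!\left( 1,\ \frac{\nu_R}{1 + \mu(r)\Vert y - u(r)\Vert_{\R^m}},\ \frac{R - \Vert u \Vert_{\L^\infty_\T}}{1 + \Vert y - u(r)\Vert_{\R^m}} \right) > 0.
\end{equation*}
For every $\alpha \in \D(u(r),y) \cap [0,\alpha_0]$, membership of $u_\Pi(t,\alpha)$ in $\Omega$ for every $t \in [a,b[_\T$ follows from the definition of $\D(u(r),y)$ given in Section \ref{sec_topoprelim} (the value at $t=r$ is in $\Omega$ by assumption on $\alpha$, and elsewhere $u_\Pi(t,\alpha) = u(t) \in \Omega$). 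Combined with the two estimates above and the trivial bound $\Vert q_a - q_a \Vert_{\R^n} = 0 \leq \eta_R$, this yields $(u_\Pi(\cdot,\alpha), q_a) \in \E(u,q_a,R)$, hence $(u_\Pi(\cdot,\alpha), q_a) \in \UU$ by Lemma \ref{prop30-1}.

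There is no real obstacle here: the proof reduces to a quantitative check on a single-point perturbation, and the fact that right-scattered points carry positive $\DD$-measure is exactly what makes the $\L^1_\T$ perturbation tend to zero linearly with $\alpha$. The genuine work was already done in Lemma \ref{prop30-1} (the openness of $\UU$ in the mixed $\L^1$--Euclidean topology), so the lemma follows essentially by construction. The analogous statement at right-dense points, treated in Section \ref{section33}, will require more care since the $\L^\infty$ bound on the needle variation does not vanish with the parameter and one must instead control the length of the spike.
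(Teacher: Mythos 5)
Your proof is correct and follows essentially the same route as the paper: both arguments reduce the lemma to checking that $(u_\Pi(\cdot,\alpha),q_a)\in\E(u,q_a,R)$ for small $\alpha$, using the exact $\L^1_\T$ identity $\Vert u_\Pi(\cdot,\alpha)-u\Vert_{\L^1_\T}=\alpha\,\mu(r)\,\Vert y-u(r)\Vert_{\R^m}$ and then invoking Lemma \ref{prop30-1}. The only cosmetic difference is that the paper picks $R=\max(\Vert u\Vert_{\L^\infty_\T},\Vert u(r)\Vert+\Vert y\Vert)+1$ so that the $\L^\infty$ bound holds for all $\alpha\in\D(u(r),y)\subset[0,1]$ without constraining $\alpha_0$, whereas you fold that bound into your explicit choice of $\alpha_0$; both are fine.
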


\begin{proof}
Let $R = \max ( \Vert u \Vert_{\L^\infty_\T ([a,b[_\T,\R^m)},\Vert u(r) \Vert_{\R^m} + \Vert y \Vert_{\R^m} ) + 1 > \Vert u \Vert_{\L^\infty_\T ([a,b[_\T,\R^m)}$. We use the notations $K$, $L$, $\nu_R$ and $\eta_R$, associated with $(u,q_a,R)$, defined in Lemma \ref{prop30-1} and in its proof.

One has $\Vert u_\Pi (\cdot,\alpha) \Vert_{\L^\infty_\T ([a,b[_\T,\R^m)} \leq R$ for every $\alpha \in \D (u(r),y)$, and
$$
\Vert u_\Pi (\cdot,\alpha) - u \Vert_{\L^1_\T ([a,b[_\T,\R^m)} = \mu (r) \Vert u_\Pi (r,\alpha) - u(r) \Vert_{\R^m}  = \alpha \mu (r) \Vert y - u(r) \Vert_{\R^m} .
$$
Hence, there exists $\alpha_0 > 0$ such that $\Vert u_\Pi (\cdot,\alpha) - u \Vert_{\L^1_\T ([a,b[_\T,\R^m)} \leq \nu_R$ for every $\alpha \in \D (u(r),y) \cap [0,\alpha_0]$, and hence $(u_\Pi (\cdot,\alpha),q_a) \in \E(u,q_a,R)$. The claim follows then from Lemma~\ref{prop30-1}.
\end{proof}

\begin{lemma}\label{lem32-1-1}
The mapping
\begin{equation*}
\fonction{F_{(u,q_a,\Pi)}}{(\D(u(r),y) \cap [0,\alpha_0],\vert \cdot \vert)}{(\CC([a,b]_\T,\R^{n}),\Vert \cdot \Vert_\infty)}{\alpha}{q (\cdot,u_\Pi (\cdot,\alpha),q_a)}
\end{equation*}
is Lipschitzian.
In particular, for every $\alpha \in \D(u(r),y) \cap [0,\alpha_0]$, $q (\cdot,u_\Pi (\cdot,\alpha),q_a)$ converges uniformly to $q (\cdot,u,q_a)$ on $[a,b]_\T$ as $\alpha$ tends to $0$.
\end{lemma}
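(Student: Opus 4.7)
}
The plan is to derive the result directly from Lemma \ref{prop30-1-1} by choosing $R$ exactly as in the proof of Lemma \ref{lem32-1}, namely $R = \max(\Vert u\Vert_{\L^\infty_\T([a,b[_\T,\R^m)},\Vert u(r)\Vert_{\R^m} + \Vert y\Vert_{\R^m}) + 1$, so that every perturbed control $u_\Pi(\cdot,\alpha)$ with $\alpha\in\D(u(r),y)\cap[0,\alpha_0]$ lies in $\E(u,q_a,R)$ with the same initial condition $q_a$. Since Lemma \ref{prop30-1-1} provides a Lipschitz constant (depending only on $L$, $b-a$ and the time-scale exponential $e_L(b,a)$) for the map $(u',q'_a)\mapsto q(\cdot,u',q'_a)$ from $(\E(u,q_a,R),d_\UU)$ into $(\CC([a,b]_\T,\R^n),\Vert\cdot\Vert_\infty)$, it suffices to show that the map $\alpha\mapsto (u_\Pi(\cdot,\alpha),q_a)$ is Lipschitz from $(\D(u(r),y)\cap[0,\alpha_0],|\cdot|)$ into $(\E(u,q_a,R),d_\UU)$.

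The key observation is that, by construction, $u_\Pi(\cdot,\alpha_1)$ and $u_\Pi(\cdot,\alpha_2)$ differ only at the right-scattered point $t=r$. Hence, for any $\alpha_1,\alpha_2\in\D(u(r),y)\cap[0,\alpha_0]$,
\begin{equation*}
\Vert u_\Pi(\cdot,\alpha_1) - u_\Pi(\cdot,\alpha_2)\Vert_{\L^1_\T([a,b[_\T,\R^m)} = \mu(r)\Vert u_\Pi(r,\alpha_1) - u_\Pi(r,\alpha_2)\Vert_{\R^m} = \mu(r)\,|\alpha_1 - \alpha_2|\,\Vert y - u(r)\Vert_{\R^m},
\end{equation*}
while both initial conditions equal $q_a$, so $d_\UU$ distance between the two pairs is linear in $|\alpha_1-\alpha_2|$. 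Composing with the Lipschitz estimate from (the proof of) Lemma \ref{prop30-1-1} yields
\begin{equation*}
\Vert q(\cdot,u_\Pi(\cdot,\alpha_1),q_a) - q(\cdot,u_\Pi(\cdot,\alpha_2),q_a)\Vert_\infty \leq L\,\mu(r)\,\Vert y-u(r)\Vert_{\R^m}\,e_L(b,a)\,|\alpha_1-\alpha_2|,
\end{equation*}
which proves that $F_{(u,q_a,\Pi)}$ is Lipschitzian. The final claim of uniform convergence of $q(\cdot,u_\Pi(\cdot,\alpha),q_a)$ to $q(\cdot,u,q_a)$ as $\alpha\to 0$ is then immediate, since $u_\Pi(\cdot,0)=u$ and the above Lipschitz estimate applied with $\alpha_2=0$ gives the desired $\O(\alpha)$ bound.

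There is no real obstacle here: the only non-routine point is verifying that all perturbed pairs $(u_\Pi(\cdot,\alpha),q_a)$ remain inside one common neighborhood $\E(u,q_a,R)$ on which the uniform Lipschitz constant of Lemma \ref{prop30-1-1} applies, and this was already ensured in the proof of Lemma \ref{lem32-1} by the specific choice of $R$ and $\alpha_0$. Once this is in place, the structure of the right-scattered needle-like variation (which is supported on the single atom $\{r\}$ of $\mu_\DD$-mass $\mu(r)$) produces the linear-in-$\alpha$ $\L^1_\T$-dependence required for Lipschitz continuity.
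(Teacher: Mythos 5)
Your proposal is correct and follows essentially the same route as the paper: both reduce the statement to the uniform Lipschitz estimate of Lemma \ref{prop30-1-1} on $\E(u,q_a,R)$ (with $R$ and $\alpha_0$ fixed as in Lemma \ref{lem32-1}) and then compute $d_{\UU}\big((u_\Pi(\cdot,\alpha^1),q_a),(u_\Pi(\cdot,\alpha^2),q_a)\big)=\mu(r)\,\vert\alpha^1-\alpha^2\vert\,\Vert y-u(r)\Vert_{\R^m}$ from the fact that the variation is supported on the single atom $\{r\}$. The only difference is cosmetic: you make the Lipschitz constant $L\,e_L(b,a)$ explicit where the paper simply invokes an abstract constant $C$.
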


\begin{proof}
We use the notations of proof of Lemma~\ref{lem32-1}. It follows from Lemma~\ref{prop30-1-1} that there exists $C \geq 0$ (the Lipschitz constant of $F_{(u,q_a,R)}$) such that
\begin{equation*}
\begin{split}
\Vert q (\cdot,u_\Pi (\cdot,\alpha^2),q_a) - q (\cdot,u_\Pi (\cdot,\alpha^1),q_a) \Vert_\infty 
& \leq C d_{\UU} ( (u_\Pi (\cdot,\alpha^2),q_a), (u_\Pi (\cdot,\alpha^1),q_a) ) \\ 
& = C \vert \alpha^2 - \alpha^1 \vert \mu (r) \Vert y - u(r) \Vert_{\R^m} ,
\end{split}
\end{equation*}
for all $\alpha^1$ and $\alpha^2$ in $\D (u(r),y) \cap [0,\alpha_0]$.
The lemma follows.
\end{proof}

We define the so-called \textit{variation vector} $w_\Pi (\cdot,u,q_a)$ associated with the needle-like variation $\Pi = (r,y)$ as the unique solution on $[\sigma(r),b]_\T$ of the linear $\DD$-Cauchy problem
\begin{equation}\label{varvect_scattered}
w^\DD(t) = \frac{\partial f}{\partial q} (q (t,u(t),q_a),u(t),t) w(t) , \quad
w(\sigma(r)) = \mu (r)  \frac{\partial f}{\partial u} (q (r,u,q_a),u(r),r) (y -u(r)).
\end{equation}
The existence and uniqueness of $w_\Pi (\cdot,u,q_a)$ are ensured by \cite[Theorem 3]{bour10}.

\begin{proposition}\label{prop32-1}
The mapping
\begin{equation}
\fonction{F_{(u,q_a,\Pi)}}{(\D(u(r),y) \cap [0,\alpha_0],\vert \cdot \vert)}{(\CC ([\sigma(r),b]_\T,\R^{n}),\Vert \cdot \Vert_\infty)}{\alpha}{q (\cdot,u_\Pi (\cdot,\alpha),q_a)}
\end{equation}
is differentiable\footnote{Clearly this mapping can be extended to a neighborhood of $0$ and we speak of its differential at $0$ in this sense.} at $0$, and there holds $DF_{(u,q_a,\Pi)}(0) = w_\Pi (\cdot,u,q_a)$.
\end{proposition}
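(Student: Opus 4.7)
The plan is to decompose the analysis of $F_{(u,q_a,\Pi)}(\alpha) - F_{(u,q_a,\Pi)}(0)$ into three regimes: the interval $[a,r]_\T$, the transition point $\sigma(r)$, and the interval $[\sigma(r),b]_\T$. First, since $u_\Pi(\cdot,\alpha)$ and $u$ differ only at the single right-scattered point $r$, they agree $\DD$-a.e.\ on $[a,r[_\T$, so the uniqueness part of the Cauchy-Lipschitz theorem from \cite{bour10} forces $q(t,u_\Pi(\cdot,\alpha),q_a) = q(t,u,q_a)$ for every $t \in [a,r]_\T$. In particular, there is no contribution of order $\alpha$ on this segment, which is consistent with the variation vector $w_\Pi$ being defined only from $\sigma(r)$ onward.

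Second, at the point $\sigma(r)$ I exploit the elementary $\DD$-derivative identity $q^\sigma(r) = q(r) + \mu(r) f(q(r),u(r),r)$ for the right-scattered point $r$. Applied to both $q(\cdot,u_\Pi(\cdot,\alpha),q_a)$ and $q(\cdot,u,q_a)$, and using Step~1 to identify $q(r,u_\Pi(\cdot,\alpha),q_a)=q(r,u,q_a)$, one obtains
\begin{equation*}
q(\sigma(r),u_\Pi(\cdot,\alpha),q_a) - q(\sigma(r),u,q_a) = \mu(r)\bigl[f(q(r,u,q_a), u(r) + \alpha(y-u(r)), r) - f(q(r,u,q_a), u(r), r)\bigr].
\end{equation*}
Dividing by $\alpha$ and using the $\mathcal{C}^1$ regularity of $f$ in its second variable, the quotient converges, as $\alpha \to 0^+$ in $\D(u(r),y)\cap[0,\alpha_0]$, to $\mu(r)\frac{\partial f}{\partial u}(q(r,u,q_a),u(r),r)(y-u(r)) = w_\Pi(\sigma(r),u,q_a)$.

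Third, on $[\sigma(r),b]_\T$ the two controls $u_\Pi(\cdot,\alpha)$ and $u$ coincide, so both trajectories satisfy the same dynamics from distinct initial values at $\sigma(r)$. Setting $\Delta_\alpha(t)=q(t,u_\Pi(\cdot,\alpha),q_a) - q(t,u,q_a)$ and using the integral representation from \cite{bour10}, one writes
\begin{equation*}
\Delta_\alpha(t) = \Delta_\alpha(\sigma(r)) + \int_{[\sigma(r),t[_\T} A_\alpha(\tau)\, \Delta_\alpha(\tau) \, \DD \tau,
\end{equation*}
where $A_\alpha(\tau) = \int_0^1 \frac{\partial f}{\partial q}\bigl(q(\tau,u,q_a) + s\Delta_\alpha(\tau),u(\tau),\tau\bigr)\, ds$ arises from the mean-value theorem applied to the $\mathcal{C}^1$ function $q\mapsto f(q,u(\tau),\tau)$. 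By Lemma~\ref{lem32-1-1}, $\Delta_\alpha \to 0$ uniformly, hence (by uniform continuity of $\partial f/\partial q$ on the compact set $K$ of Lemma~\ref{prop30-1}) $A_\alpha$ converges uniformly on $[\sigma(r),b[_\T$ to $\frac{\partial f}{\partial q}(q(\cdot,u,q_a),u(\cdot),\cdot)$.

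Finally, dividing by $\alpha$ and comparing with the integral form of \eqref{varvect_scattered}, set $\delta_\alpha(t) = \alpha^{-1}\Delta_\alpha(t) - w_\Pi(t,u,q_a)$. Subtracting the two integral equations yields
\begin{equation*}
\delta_\alpha(t) = \bigl[\alpha^{-1}\Delta_\alpha(\sigma(r)) - w_\Pi(\sigma(r),u,q_a)\bigr] + \int_{[\sigma(r),t[_\T} \!\! A_\alpha(\tau)\,\delta_\alpha(\tau) \, \DD \tau + \int_{[\sigma(r),t[_\T} \!\! \bigl(A_\alpha(\tau) - \tfrac{\partial f}{\partial q}(q(\tau,u,q_a),u(\tau),\tau)\bigr) w_\Pi(\tau,u,q_a) \, \DD \tau.
\end{equation*}
By Step~2 the bracketed initial term tends to $0$, and by the uniform convergence of $A_\alpha$ together with the boundedness of $w_\Pi$, the last integral tends to $0$ uniformly in $t$. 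Applying the time-scale Gronwall inequality (Lemma~\ref{lemgronwall}) to $\|\delta_\alpha(\cdot)\|_{\R^n}$, with $A_\alpha$ uniformly bounded by $L$, gives $\|\delta_\alpha\|_\infty \to 0$, which proves differentiability of $F_{(u,q_a,\Pi)}$ at $0$ with differential $w_\Pi(\cdot,u,q_a)$. The main delicate point is ensuring the uniform convergence $A_\alpha \to \partial f/\partial q$ on the relevant compact set so that the Gronwall step closes; this is precisely where the compactness furnished by Lemma~\ref{prop30-1} and Remark~\ref{rmK} becomes essential.
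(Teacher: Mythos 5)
Your proof is correct and follows essentially the same route as the paper's: form the normalized error on $[\sigma(r),b]_\T$, linearize via the mean value theorem (you use the integral form, the paper a pointwise intermediate point $\theta_\Pi(t,\alpha)\in K$), and close with the time-scale Gronwall inequality, using uniform continuity of $\partial f/\partial q$ on the compact set $K$. Your Step~2 merely makes explicit, via $q^\sigma(r)=q(r)+\mu(r)f(q(r),u_\Pi(r,\alpha),r)$ and the coincidence of the trajectories on $[a,r]_\T$, the convergence of the initial term $\varepsilon_\Pi(\sigma(r),\alpha)\to 0$ that the paper states as ``easy to see.''
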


\begin{proof}
We use the notations of proof of Lemma~\ref{lem32-1}. Recall that $(q(t,u_\Pi(\cdot,\alpha),q_a),u_\Pi (t,\alpha),t) \in K$ for every $\alpha \in \D (u(r),y) \cap [0,\alpha_0] $ and for $\DD$-a.e.\ $t \in [a,b[_\T$, see Remark~\ref{rmK}. For every $\alpha \in \D (u(r),y) \cap ]0,\alpha_0]$ and every $t \in [\sigma (r),b]_\T$, we define
$$
\varepsilon_\Pi (t,\alpha) = \frac{q(t,u_\Pi(\cdot,\alpha),q_a) - q (t,u,q_a)}{\alpha }- w_{\Pi} (t,u,q_a).
$$
It suffices to prove that $\varepsilon_\Pi (\cdot,\alpha)$ converges uniformly to $0$ on $[\sigma (r),b]_\T$ as $\alpha$ tends to $0$.
For every $\alpha \in \D (u(r),y) \cap ]0,\alpha_0]$, the function $\varepsilon_\Pi (\cdot,\alpha)$ is absolutely continuous on $[\sigma(r),b]_\T$, and
$\varepsilon_\Pi (t,\alpha) = \varepsilon_\Pi (\sigma (r),\alpha) + \int_{[\sigma(r),t[_\T} \varepsilon^\DD_\Pi (\tau,\alpha) \, \DD \tau$
for every $t \in [\sigma (r),b]_\T$, where
\begin{equation*}
\varepsilon_\Pi^\DD (t,\alpha) = \frac{f(q(t,u_\Pi(\cdot,\alpha),q_a),u(t),t)-f(q (t,u,q_a),u(t),t)}{\alpha} - \frac{\partial f}{\partial q} (q(t,u,q_a),u(t),t) w_{\Pi} (t,u,q_a) ,
\end{equation*}
for $\DD$-a.e.\ $t \in [\sigma(r),b[_\T$. 
It follows from the Mean Value Theorem applied for $\DD$-a.e.\ $t \in [\sigma(r),b[_\T$ to the function defined by 
$\varphi_t(\theta) = f( (1-\theta) q(t,u,q_a) + \theta q(t,u_\Pi(\cdot,\alpha),q_a) , u(t), t)$ for every $\theta\in[0,1]$, 
that there exists $\theta_\Pi (t,\alpha)\in\R^n$, belonging to the segment of extremities $q(t,u,q_a)$ and $q(t,u_\Pi(\cdot,\alpha),q_a)$, such that
\begin{equation*}\begin{split}
\varepsilon_\Pi^\DD (t,\alpha) = & \frac{\partial f}{\partial q} (\theta_\Pi (t,\alpha),u(t),t) \varepsilon_\Pi (t,\alpha) \\ & + \left(\frac{\partial f}{\partial q} (\theta_\Pi (t,\alpha),u(t),t) - \frac{\partial f}{\partial q} (q(t,u,q_a),u(t),t) \right) w_{\Pi} (t,u,q_a) .
\end{split}\end{equation*}
Since $(\theta_\Pi (t,\alpha),u(t),t) \in K$ for $\DD$-a.e.\ $t \in [\sigma(r),b[_\T$, it follows that
$\Vert \varepsilon_\Pi^\DD (t,\alpha) \Vert \leq \chi_\Pi (t,\alpha) +  L \Vert \varepsilon_\Pi (t,\alpha) \Vert$, where
$
\chi_\Pi (t,\alpha) = \big\Vert \big(\frac{\partial f}{\partial q} (\theta_\Pi (t,\alpha),u(t),t) - \frac{\partial f}{\partial q} (q(t,u,q_a),u(t),t) \big)  w_{\Pi} (t,u,q_a) \big\Vert.
$
Therefore, one has
\begin{equation*}
\Vert \varepsilon_\Pi (t,\alpha) \Vert_{\R^n}  \leq \Vert \varepsilon_\Pi (\sigma(r),\alpha) \Vert_{\R^n}  +  \int_{[\sigma(r),b[_\T} \chi_\Pi (\tau,\alpha) \,\DD \tau  +L \int_{[\sigma(r),t[_\T} \Vert \varepsilon_\Pi (\tau,\alpha) \Vert_{\R^n}  \, \DD \tau,
\end{equation*}
for every $t \in [\sigma(r),b]_\T$. It follows from Lemma \ref{lemgronwall} that
$\Vert \varepsilon_\Pi (t,\alpha) \Vert_{\R^n}  \leq \Upsilon_\Pi (\alpha) e_L (b,\sigma(r))$, 
for every $t \in [\sigma(r),b]_\T$, where
$\Upsilon_\Pi (\alpha) = \Vert \varepsilon_\Pi (\sigma(r),\alpha) \Vert_{\R^n}  +  \int_{[\sigma(r),b[_\T} \chi_\Pi (\tau,\alpha) \, \DD \tau$.

To conclude, it remains to prove that $\Upsilon_\Pi (\alpha)$ converges to $0$ as $\alpha$ tends to $0$. 
First, since $\theta_\Pi (\cdot,\alpha)$ converges uniformly to $q(\cdot,u,q_a)$ on $[\sigma(r),b]_\T$ as $\alpha$ tends to $0$, and since $\partial f / \partial q$ is uniformly continuous on $K$, we infer that $\int_{[\sigma(r),b[_\T} \chi_\Pi (\tau,\alpha) \, \DD \tau$ converges to $0$ as $\alpha$ tends to $0$.  
Second, it is easy to see that $\Vert \varepsilon_\Pi (\sigma(r),\alpha) \Vert_{\R^n} $ converges to $0$ as $\alpha$ tends to $0$. The conclusion follows.
\end{proof}

\begin{lemma}\label{lem32-2}
Let $R > \Vert u \Vert_{\L^\infty_\T ([a,b[_\T,\R^m)}$ and let $(u_k,q_{a,k})_{k \in \N}$ be a sequence of elements of $\E(u,q_a,R)$. If $u_k$ converges to $u$ $\DD$-a.e.\ on $[a,b[_\T$ and $q_{a,k}$ converges to $q_a$ in $\R^n$ as $k$ tends to $+\infty$, then
$w_{\Pi}(\cdot,u_k,q_{a,k})$ converges uniformly to $w_\Pi(\cdot,u,q_a)$ on $[\sigma(r),b]_\T$ as $k$ tends to $+\infty$.
\end{lemma}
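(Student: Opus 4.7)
The plan is to derive the lemma from a time-scale Gronwall estimate on the difference $v_k := w_\Pi(\cdot,u_k,q_{a,k}) - w_\Pi(\cdot,u,q_a)$, exploiting that $(u_k,q_{a,k})\in\E(u,q_a,R)$ keeps all the relevant trajectories inside the compact set $K$ introduced in the proof of Lemma \ref{prop30-1}.

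First, I would promote the two given modes of convergence to convergence of the initial datum of \eqref{varvect_scattered}. Since $r\in\RR$ one has $\mu_\DD(\{r\})=\mu(r)>0$, so $\DD$-a.e.\ convergence forces $u_k(r)\to u(r)$ in $\R^m$ (see Subsection \ref{section1}). Combined with the uniform bound $\Vert u_k\Vert_{\L^\infty_\T([a,b[_\T,\R^m)}\leq R$ on the bounded interval $[a,b[_\T$, dominated convergence for $\mu_\DD$ yields $u_k\to u$ in $\L^1_\T([a,b[_\T,\R^m)$. Together with $q_{a,k}\to q_a$, Lemma \ref{prop30-1-1} then gives $q(\cdot,u_k,q_{a,k})\to q(\cdot,u,q_a)$ uniformly on $[a,b]_\T$, and, in particular, at $t=r$. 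Continuity of $\partial f/\partial u$ therefore implies that the initial conditions $w_k(\sigma(r))\to w(\sigma(r))$, where I abbreviate $w_k := w_\Pi(\cdot,u_k,q_{a,k})$ and $w := w_\Pi(\cdot,u,q_a)$.

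Next, set $A_k(t):=\frac{\partial f}{\partial q}(q(t,u_k,q_{a,k}),u_k(t),t)$ and $A(t):=\frac{\partial f}{\partial q}(q(t,u,q_a),u(t),t)$. By Remark \ref{rmK}, both arguments lie in $K$ for $\DD$-a.e.\ $t\in[a,b[_\T$, so $\Vert A_k(t)\Vert, \Vert A(t)\Vert \leq L$ uniformly in $k$. A first application of Lemma \ref{lemgronwall} to \eqref{varvect_scattered} yields $\Vert w\Vert_\infty<+\infty$ and, by convergence of the initial conditions, $\sup_k \Vert w_k\Vert_\infty<+\infty$. The difference $v_k=w_k-w$ solves
\begin{equation*}
v_k^\DD(t)=A_k(t)v_k(t)+\bigl(A_k(t)-A(t)\bigr)w(t),\qquad v_k(\sigma(r))=w_k(\sigma(r))-w(\sigma(r)),
\end{equation*}
so integration gives, for every $t\in[\sigma(r),b]_\T$,
\begin{equation*}
\Vert v_k(t)\Vert_{\R^n} \leq \Vert v_k(\sigma(r))\Vert_{\R^n}+\varepsilon_k+L\int_{[\sigma(r),t[_\T}\Vert v_k(\tau)\Vert_{\R^n}\,\DD\tau,
\end{equation*}
with $\varepsilon_k:=\int_{[\sigma(r),b[_\T}\Vert(A_k(\tau)-A(\tau))w(\tau)\Vert_{\R^n}\,\DD\tau$. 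Lemma \ref{lemgronwall} then delivers the estimate $\Vert v_k\Vert_\infty \leq (\Vert v_k(\sigma(r))\Vert_{\R^n}+\varepsilon_k)\,e_L(b,\sigma(r))$, so the lemma reduces to showing that $\varepsilon_k\to 0$.

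The main obstacle is this last step, which I would settle by dominated convergence on the finite measure space $([\sigma(r),b[_\T,\mu_\DD)$: the integrand is dominated $\DD$-a.e.\ by $2L\Vert w\Vert_\infty$, which is $\DD$-integrable on the bounded set $[\sigma(r),b[_\T$. The pointwise $\DD$-a.e.\ convergence $A_k(t)\to A(t)$ follows by combining the $\DD$-a.e.\ convergence $u_k(t)\to u(t)$, the uniform convergence $q(\cdot,u_k,q_{a,k})\to q(\cdot,u,q_a)$ already obtained, and the continuity of $\partial f/\partial q$ on $K$. Hence $\varepsilon_k\to 0$, and together with $\Vert v_k(\sigma(r))\Vert_{\R^n}\to 0$ this yields uniform convergence of $w_\Pi(\cdot,u_k,q_{a,k})$ to $w_\Pi(\cdot,u,q_a)$ on $[\sigma(r),b]_\T$, as desired.
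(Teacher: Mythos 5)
Your proof is correct and follows essentially the same route as the paper: the same Gronwall estimate on the difference of the two variation vectors, with the initial condition handled via $\mu_\DD(\{r\})=\mu(r)>0$ and Lemma \ref{prop30-1-1}, and the source term sent to zero by dominated convergence. The only (harmless) difference is that you spell out the $\L^1_\T$ convergence of $u_k$ via dominated convergence, which the paper leaves implicit.
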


\begin{proof}
We use the notations $K$, $L$, $\nu_R$ and $\eta_R$, associated with $(u,q_a,R)$, defined in Lemma \ref{prop30-1} and in its proof.

Consider the absolutely continuous function defined by $\Phi_k(t) = w_{\Pi}(t,u_k,q_{a,k}) - w_{\Pi}(t,u,q_a)$ for every $k \in \N$ and every $t\in[\sigma(r),b]_\T$. Let us prove that $\Phi_k$ converges uniformly to $0$ on $[\sigma(r),b]_\T$ as $k$ tends to $+\infty$. One has
\begin{equation*}\begin{split}
\Phi_k (t) = & \Phi_k (\sigma(r) ) + \int_{[\sigma(r),t[_\T} \frac{\partial f}{\partial q} (q(\tau,u_k,q_{a,k}),u_k (\tau),\tau) \Phi_k (\tau) \, \DD \tau \\ 
& + \int_{[\sigma(r),t[_\T} \left(\frac{ \partial f}{\partial q} (q(\tau,u_k,q_{a,k}),u_k (\tau),\tau) - \frac{\partial f}{\partial q} (q(\tau,u,q_{a}),u (\tau),\tau) \right) w_{\Pi} (\tau,u,q_a) \, \DD \tau ,
\end{split}\end{equation*}
for every $t \in [\sigma(r),b]_\T$ and every $k \in \N$. Since $(u_k,q_{a,k}) \in \E(u,q_a,R)$ for every $k \in \N$, it follows from Remark~\ref{rmK} that $(q(t,u_k,q_{a,k}),u_k(t),t) \in K$ and $(q(t,u,q_a),u(t),t) \in K$ for $\DD$-a.e.\ $t \in [a,b[_\T$. Hence it follows from Lemma \ref{lemgronwall} that
$$\Vert \Phi_k (t) \Vert_{\R^n}   \leq  (\Vert \Phi_k (\sigma(r)) \Vert_{\R^n}  + \vartheta_k) e_L (b,\sigma(r)),$$
for every $t \in [\sigma(r),b]_\T$, where
$$\vartheta_k = \int_{[\sigma(r),b[_\T}  \left\Vert \frac{\partial f}{\partial q} (q(\tau,u_k,q_{a,k}),u_k (\tau),\tau) - \frac{\partial f}{\partial q} (q(\tau,u,q_a),u (\tau),\tau) \right\Vert_{\R^{n,n}} \Vert w_{\Pi} (\tau,u,q_a) \Vert_{\R^n}  \; \DD \tau .$$
Since $\mu_\DD (\{ r\} ) = \mu (r) > 0$, $u_k(r)$ converges to $u(r)$ as $k$ tends to $+\infty$. Moreover, $(u_k,q_{a,k})$ converges to $(u,q_a)$ in $(\E(u,q_a,R),d_{\UU})$ and, from Lemma \ref{prop30-1-1}, $q(\cdot,u_k,q_{a,k})$ converges uniformly to $q(\cdot,u,q_a)$ on $[a,b]_\T$ as $k$ tends to $+\infty$. We infer that $\Phi_k (\sigma (r) )$ converges to $0$ as $k$ tends to $+\infty$, and from the Lebesgue dominated convergence theorem we conclude that $\vartheta_k$ converges to $0$ as $k$ tends to $+\infty$. The lemma follows.
\end{proof}

\begin{remark}
It is interesting to note that, since $u_k (r)$ converges to $u(r)$ as $k$ tends to $+\infty$, if we assume that $y \in \DS(u(r))$, then $y \in \D(u_k(r))$ for $k$ sufficiently large.
\end{remark}

\subsubsection{Needle-like variation of $u$ at a right-dense point}\label{section33}
The definition of a needle-like variation at a Lebesgue right-dense point is very similar to the classical continuous-time case.
Let $s \in \LL_{[a,b[_\T} (f(q(\cdot,u,q_a),u(\cdot),\cdot)) \cap \SS$ and $z \in \Omega$. We define the needle-like variation $\amalg = (s,z)$ of $u$ at $s$ by
\begin{equation*}
u_\amalg (t,\beta) = \left\{ \begin{array}{lcl}
z & \textrm{if} & t \in [s,s+\beta[_\T , \\
u(t) & \textrm{if} & t \notin [s,s+\beta[_\T.
\end{array} \right.
\end{equation*}
for every $\beta \in \V^b_s$ (here, we use the notations introduced in Section \ref{section1}).
Note that $u_\amalg (\cdot,\beta) \in \L^\infty_{\T} ([a,b[_\T;\Omega)$. 

\begin{lemma}\label{lem33-1}
There exists $\beta_0 > 0$ such that $(u_\amalg (\cdot,\beta),q_a) \in \UU$ for every $\beta \in \V^b_s \cap [0,\beta_0]$.
\end{lemma}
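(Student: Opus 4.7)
The plan is to deduce this directly from Lemma \ref{prop30-1} by showing that, for $\beta$ small enough, the perturbed pair $(u_\amalg(\cdot,\beta),q_a)$ lies in the set $\E(u,q_a,R)$ for a suitably chosen $R$.

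First, I would fix $R > \max\bigl(\Vert u \Vert_{\L^\infty_\T([a,b[_\T,\R^m)} , \Vert z \Vert_{\R^m}\bigr)$, so in particular $R > \Vert u \Vert_{\L^\infty_\T([a,b[_\T,\R^m)}$, and apply Lemma \ref{prop30-1} to obtain the constants $\nu_R > 0$ and $\eta_R > 0$ such that $\E(u,q_a,R) \subset \UU$. Since $u_\amalg(t,\beta)$ equals either $u(t)$ or $z$, the choice of $R$ gives $\Vert u_\amalg(\cdot,\beta) \Vert_{\L^\infty_\T([a,b[_\T,\R^m)} \leq R$ for every $\beta \in \V^b_s$, and of course the initial condition is unchanged so $\Vert q_a - q_a \Vert_{\R^n} = 0 \leq \eta_R$.

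The key remaining step is the $\L^1$-estimate. By definition $u_\amalg(\cdot,\beta)$ differs from $u$ only on $[s,s+\beta[_\T$, and since $\beta \in \V^b_s$ we have $s+\beta \in \T$, so $\mu_\DD([s,s+\beta[_\T) = \beta$. Consequently
\begin{equation*}
\Vert u_\amalg(\cdot,\beta) - u \Vert_{\L^1_\T([a,b[_\T,\R^m)} = \int_{[s,s+\beta[_\T} \Vert z - u(\tau) \Vert_{\R^m} \, \DD\tau \leq \bigl( \Vert z \Vert_{\R^m} + \Vert u \Vert_{\L^\infty_\T([a,b[_\T,\R^m)} \bigr)\, \beta .
\end{equation*}
Choosing $\beta_0 > 0$ such that $\bigl( \Vert z \Vert_{\R^m} + \Vert u \Vert_{\L^\infty_\T([a,b[_\T,\R^m)} \bigr)\, \beta_0 \leq \nu_R$, we obtain $\Vert u_\amalg(\cdot,\beta) - u \Vert_{\L^1_\T([a,b[_\T,\R^m)} \leq \nu_R$ for every $\beta \in \V^b_s \cap [0,\beta_0]$. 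Hence $(u_\amalg(\cdot,\beta),q_a) \in \E(u,q_a,R) \subset \UU$, which proves the lemma.

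There is no real obstacle here: the argument is routine once Lemma \ref{prop30-1} is available, the only point deserving care being that the $\mu_\DD$-measure of $[s,s+\beta[_\T$ is exactly $\beta$, which requires $s+\beta \in \T$ — and this is precisely the content of the definition $\beta \in \V^b_s$ from \eqref{defVbs}.
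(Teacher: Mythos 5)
Your proof is correct and follows essentially the same route as the paper: fix $R$ dominating both $\Vert u \Vert_{\L^\infty_\T([a,b[_\T,\R^m)}$ and $\Vert z \Vert_{\R^m}$, check the $\L^\infty$ bound and the $\L^1$ estimate $\Vert u_\amalg(\cdot,\beta) - u \Vert_{\L^1_\T([a,b[_\T,\R^m)} \leq 2R\beta$ (the paper uses the constant $2R$ where you use $\Vert z \Vert_{\R^m} + \Vert u \Vert_{\L^\infty_\T([a,b[_\T,\R^m)}$, an immaterial difference), and conclude via Lemma \ref{prop30-1}. Your remark that $\mu_\DD([s,s+\beta[_\T)=\beta$ hinges on $s+\beta\in\T$, i.e.\ on $\beta\in\V^b_s$, is exactly the right point of care.
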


\begin{proof}
Let $R = \max ( \Vert u \Vert_{\L^\infty_\T ([a,b[_\T,\R^m)},\Vert z \Vert_{\R^m}) + 1 > \Vert u \Vert_{\L^\infty_\T ([a,b[_\T,\R^m)}$. We use the notations $K$, $L$, $\nu_R$ and $\eta_R$, associated with $(u,q_a,R)$, defined in Lemma \ref{prop30-1} and in its proof.

For every $\beta \in \V^b_s$ one has $\Vert u_\amalg (\cdot,\beta) \Vert_{\L^\infty_\T ([a,b[_\T,\R^m)} \leq R$ and
$$
\Vert u_\amalg (\cdot,\beta) - u \Vert_{\L^1_\T ([a,b[_\T,\R^m)} = \int_{[s,s+\beta[_\T} \Vert z - u(\tau) \Vert_{\R^m}  \, \DD \tau \leq 2 R \beta.
$$
Hence, there exists $\beta_0 > 0$ such that for every $\beta \in \V^b_s \cap [0,\beta_0]$, $\Vert u_\amalg (\cdot,\beta) - u \Vert_{\L^1_\T ([a,b[_\T,\R^m)} \leq \nu_R$ and thus $(u_\amalg (\cdot,\beta),q_a) \in \E(u,q_a,R)$. The conclusion then follows from Lemma \ref{prop30-1}.
\end{proof}

\begin{lemma}\label{lem33-1-1}
The mapping
\begin{equation*}
\fonction{F_{(u,q_a,\amalg)}}{(\V^b_s \cap [0,\beta_0],\vert \cdot \vert)}{(\CC([a,b]_\T,\R^{n}),\Vert \cdot \Vert_\infty)}{\beta}{q (\cdot,u_\amalg (\cdot,\beta),q_a)}
\end{equation*}
is Lipschitzian.
In particular, for every $\beta \in \V^b_s \cap \B(0,\beta_0)$, $q (\cdot,u_\amalg (\cdot,\beta),q_a)$ converges uniformly to $q(\cdot,u,q_a)$ on $[a,b]_\T$ as $\beta$ tends to $0$.
\end{lemma}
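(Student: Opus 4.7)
The plan is to reduce this lemma directly to Lemma \ref{prop30-1-1} by estimating the $d_\UU$-distance between two needle-like variations. First I would fix $R = \max(\Vert u \Vert_{\L^\infty_\T([a,b[_\T,\R^m)}, \Vert z \Vert_{\R^m}) + 1$ as in the proof of Lemma \ref{lem33-1}, so that for every $\beta \in \V^b_s \cap [0,\beta_0]$ the pair $(u_\amalg(\cdot,\beta), q_a)$ lies in $\E(u,q_a,R) \subset \UU$. Then Lemma \ref{prop30-1-1} provides a constant $C \geq 0$ such that
$$\Vert q(\cdot,u_\amalg(\cdot,\beta_1),q_a) - q(\cdot,u_\amalg(\cdot,\beta_2),q_a) \Vert_\infty \leq C \, d_\UU\bigl((u_\amalg(\cdot,\beta_1),q_a),(u_\amalg(\cdot,\beta_2),q_a)\bigr),$$
for all $\beta_1,\beta_2 \in \V^b_s \cap [0,\beta_0]$.

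Next I would compute the right-hand side. Assuming without loss of generality that $\beta_1 \leq \beta_2$, the two controls coincide outside of $[s+\beta_1, s+\beta_2[_\T$, so
$$d_\UU\bigl((u_\amalg(\cdot,\beta_1),q_a),(u_\amalg(\cdot,\beta_2),q_a)\bigr) = \int_{[s+\beta_1,s+\beta_2[_\T} \Vert z - u(\tau) \Vert_{\R^m}\, \DD\tau \leq 2R \, \mu_\DD([s+\beta_1,s+\beta_2[_\T).$$
Since $s+\beta_1, s+\beta_2 \in \T$ (by definition of $\V^b_s$), one has $\mu_\DD([s+\beta_1,s+\beta_2[_\T) = \beta_2 - \beta_1$. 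Combining the two estimates yields the Lipschitz constant $2RC$ and hence the first conclusion. For the second conclusion I would simply note that $u_\amalg(\cdot,0) = u$, so $q(\cdot, u_\amalg(\cdot,0), q_a) = q(\cdot,u,q_a)$, and the Lipschitz property at $\beta=0$ gives the claimed uniform convergence.

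The only mild subtlety — the one place I would be careful — is ensuring that the $d_\UU$-distance genuinely scales linearly with $|\beta_2-\beta_1|$ even when $s$, $s+\beta_1$ or $s+\beta_2$ might be right-scattered or part of a complicated time scale structure; this is handled cleanly by the Carathéodory-extension identity $\mu_\DD([c,d[_\T)=d-c$ for $c,d \in \T$, which is exactly why the hypothesis $\beta \in \V^b_s$ is imposed. No other obstruction appears, and no needle-like variation machinery beyond what is already contained in Lemmas \ref{prop30-1} and \ref{prop30-1-1} is required.
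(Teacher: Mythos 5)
Your proposal is correct and follows essentially the same route as the paper: both reduce the claim to the Lipschitz continuity of $F_{(u,q_a,R)}$ from Lemma \ref{prop30-1-1} and bound $d_{\UU}\bigl((u_\amalg(\cdot,\beta^1),q_a),(u_\amalg(\cdot,\beta^2),q_a)\bigr)$ by $2R\vert\beta^2-\beta^1\vert$ using the identity $\mu_\DD([c,d[_\T)=d-c$ for points of $\T$. Your explicit attention to why $s+\beta_1,s+\beta_2\in\T$ matters is a detail the paper leaves implicit, but the argument is the same.
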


\begin{proof}
We use the notations of proof of Lemma~\ref{lem33-1}. From Lemma \ref{prop30-1-1}, there exists $C \geq 0$ (Lipschitz constant of $F_{(u,q_a,R)}$) such that 
\begin{equation*}\begin{split}
\Vert q (\cdot,u_\amalg (\cdot,\beta^2),q_a) - q (\cdot,u_\amalg (\cdot,\beta^1),q_a) \Vert_\infty 
& \leq C d_{\UU} ((u_\amalg (\cdot,\beta^2),q_a),(u_\amalg (\cdot,\beta^1),q_a)) \\
& \leq 2 C R \vert \beta^2-\beta^1 \vert,
\end{split}\end{equation*}
for all $\beta^1$ and $\beta^2$ in $\V^b_s \cap [0,\beta_0]$.
The lemma follows.
\end{proof}

According to \cite[Theorem 3]{bour10}, we define the \textit{variation vector} $w_{\amalg}(\cdot,u,q_a)$ associated with the needle-like variation $\amalg = (s,z)$ as the unique solution on $[s,b]_\T$ of the linear $\DD$-Cauchy problem
\begin{equation}\label{varvect_dense}
w^\DD(t)  = \frac{\partial f}{\partial q} (q(t,u,q_a),u(t),t)w(t) , \quad w(s) = f(q(s,u,q_a),z,s) - f(q(s,u,q_a),u(s),s).
\end{equation}

\begin{proposition}\label{prop33-1}
For every $\delta \in \V^b_s \backslash \{ 0 \}$, the mapping
\begin{equation}
\fonction{F^\delta_{(u,q_a,\amalg)}}{(\V^b_s \cap [0,\beta_0],\vert \cdot \vert)}{(\CC ([s+\delta,b]_\T,\R^{n}),\Vert \cdot \Vert_\infty)}{\beta}{q (\cdot,u_\amalg (\cdot,\beta),q_a)}
\end{equation}
is differentiable at $0$, and one has $ DF^\delta_{(u,q_a,\amalg)}(0) = w_\amalg (\cdot,u,q_a)$.
\end{proposition}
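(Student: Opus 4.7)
The plan is to adapt the proof of Proposition \ref{prop32-1} to the right-dense case, the crucial new ingredient being the Lebesgue point property \eqref{eq9999} at $s$. For every $\beta \in \V^b_s$ with $0 < \beta \leq \min(\beta_0,\delta)$ and every $t \in [s+\beta,b]_\T$, define
$$\varepsilon_\amalg(t,\beta) = \frac{q(t,u_\amalg(\cdot,\beta),q_a) - q(t,u,q_a)}{\beta} - w_\amalg(t,u,q_a).$$
I will show that $\varepsilon_\amalg(\cdot,\beta)$ converges uniformly to $0$ on $[s+\delta,b]_\T$ as $\beta \to 0^+$. Since $u_\amalg(\tau,\beta) = u(\tau)$ for every $\tau \in [s+\beta,b[_\T$, both $q(\cdot,u_\amalg(\cdot,\beta),q_a)$ and $q(\cdot,u,q_a)$ satisfy \eqref{DD-CS} with the same control on this interval. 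Applying the mean value theorem to $f$ in its first variable exactly as in the proof of Proposition \ref{prop32-1}, one obtains
$$\varepsilon_\amalg(t,\beta) = \varepsilon_\amalg(s+\beta,\beta) + \int_{[s+\beta,t[_\T} \varepsilon_\amalg^\DD(\tau,\beta) \, \DD\tau,$$
with $\Vert \varepsilon_\amalg^\DD(\tau,\beta) \Vert_{\R^n} \leq \chi_\amalg(\tau,\beta) + L \Vert \varepsilon_\amalg(\tau,\beta) \Vert_{\R^n}$, where $L$ is the Lipschitz constant on the compact set $K$ provided by Lemma \ref{prop30-1}, and where $\chi_\amalg$ is the analogue of the quantity $\chi_\Pi$ appearing in the scattered case.

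The heart of the argument is to establish that $\varepsilon_\amalg(s+\beta,\beta) \to 0$ as $\beta \to 0^+$. Since the controls $u_\amalg(\cdot,\beta)$ and $u$ coincide on $[a,s[_\T$, one has
$$q(s+\beta,u_\amalg(\cdot,\beta),q_a) - q(s+\beta,u,q_a) = \int_{[s,s+\beta[_\T} \bigl[ f(q(\tau,u_\amalg(\cdot,\beta),q_a),z,\tau) - f(q(\tau,u,q_a),u(\tau),\tau) \bigr] \DD\tau.$$
Splitting the integrand by first substituting $q(\tau,u,q_a)$ for $q(\tau,u_\amalg(\cdot,\beta),q_a)$ and then substituting $u(\tau)$ for $z$, the first piece is bounded by $L \Vert q(\cdot,u_\amalg(\cdot,\beta),q_a) - q(\cdot,u,q_a) \Vert_\infty = O(\beta)$ thanks to Lemma \ref{lem33-1-1}, hence contributes $O(\beta) \to 0$ after division by $\beta$. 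The second piece is the integral of the function $\tau \mapsto f(q(\tau,u,q_a),z,\tau) - f(q(\tau,u,q_a),u(\tau),\tau)$, which belongs to $\L^1_\T([a,b[_\T,\R^n)$ and admits $s$ as a Lebesgue point since $\tau \mapsto f(q(\tau,u,q_a),z,\tau)$ is continuous at the right-dense point $s$ and $s$ is, by hypothesis, a Lebesgue point of $f(q(\cdot,u,q_a),u(\cdot),\cdot)$. Applying \eqref{eq9999}, valid precisely because $\beta \in \V^b_s$, the ratio converges to $f(q(s,u,q_a),z,s) - f(q(s,u,q_a),u(s),s) = w_\amalg(s,u,q_a)$. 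Combined with the continuity of $w_\amalg$ at $s$ (so that $w_\amalg(s+\beta,u,q_a) \to w_\amalg(s,u,q_a)$), this yields $\varepsilon_\amalg(s+\beta,\beta) \to 0$.

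To conclude, Gronwall's inequality (Lemma \ref{lemgronwall}) applied on $[s+\beta,b]_\T$ gives
$$\Vert \varepsilon_\amalg(t,\beta) \Vert_{\R^n} \leq \Bigl( \Vert \varepsilon_\amalg(s+\beta,\beta) \Vert_{\R^n} + \int_{[s+\beta,b[_\T} \chi_\amalg(\tau,\beta) \, \DD\tau \Bigr) e_L(b,s+\beta).$$
The right-hand side tends to $0$: the initial error by the previous paragraph, the integral of $\chi_\amalg$ by dominated convergence (using uniform continuity of $\partial f/\partial q$ on $K$ and the uniform convergence $\theta_\amalg(\cdot,\beta) \to q(\cdot,u,q_a)$ provided by Lemma \ref{lem33-1-1}), and $e_L(b,s+\beta) \to e_L(b,s)$ by the definition of the generalized exponential. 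This establishes the uniform convergence of $\varepsilon_\amalg(\cdot,\beta)$ to $0$ on $[s+\delta,b]_\T$, hence $DF^\delta_{(u,q_a,\amalg)}(0) = w_\amalg(\cdot,u,q_a)$. The main obstacle is the initial error step, whose resolution hinges on the subtlety discussed in Section \ref{section10}: the Lebesgue point formula \eqref{eq9999} is valid only for $\beta$ such that $s+\beta \in \T$, a constraint built into the very definition of $\V^b_s$.
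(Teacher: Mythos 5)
Your proof is correct and follows essentially the same route as the paper's: the same error function $\varepsilon_\amalg$, the same Mean Value Theorem plus Gronwall scheme inherited from Proposition \ref{prop32-1}, and the same treatment of the initial error $\varepsilon_\amalg(s+\beta,\beta)$ by splitting off the Lebesgue-point average of $f(q(\cdot,u,q_a),z,\cdot)-f(q(\cdot,u,q_a),u(\cdot),\cdot)$ via \eqref{eq9999}. The only cosmetic difference is that you bound the state-substitution piece quantitatively as $O(\beta^2)$ using the Lipschitz estimate, where the paper invokes uniform continuity of $f$ on $K$ together with the uniform convergence of $q(\cdot,u_\amalg(\cdot,\beta),q_a)$; both yield the same conclusion.
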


\begin{proof}
We use the notations of proof of Lemma~\ref{lem33-1}. Recall that $(q (t,u_\amalg (\cdot,\beta),q_a),u_\amalg (t,\beta),t)$ and $(q (t,u_\amalg (\cdot,\beta),q_a),z,t)$ belong to $K$ for every $\beta \in \V^b_s \cap [0,\beta_0]$ and for $\DD$-a.e.\ $t \in [a,b[_\T$, see Remark~\ref{rmK}. For every $\beta \in \V^b_s \cap ]0,\beta_0]$ and every $t \in [s+\beta,b]_\T$, we define
$$ \varepsilon_\amalg (t,\beta) = \frac{q (t,u_\amalg (\cdot,\beta),q_a) - q(t,u,q_a)}{ \beta } - w_{\amalg} (t,u,q_a). $$
It suffices to prove that $\varepsilon_\amalg (\cdot,\beta)$ converges uniformly to $0$ on $[s+\beta,b]_\T$ as $\beta$ tends to $0$ (note that, for every $\delta \in \V^b_s \backslash \{ 0 \}$, it suffices to consider $\beta \leq \delta$).
For every $\beta \in \V^b_s \cap ]0,\beta_0]$, the function $\varepsilon_\amalg (\cdot,\beta)$ is absolutely continuous on $[s+\beta,b]_\T$ and
$\varepsilon_\amalg (t,\beta) = \varepsilon_\amalg (s+\beta,\beta) +  \int_{[s+\beta,t[_\T} \varepsilon^\DD_\amalg (\tau,\beta) \, \DD \tau$,
for every $t \in [s+\beta,b]_\T$, 
where
$$
\varepsilon_\amalg^\DD (t,\beta) = \frac{f(q (t,u_\amalg (\cdot,\beta),q_a),u(t),t)-f(q (t,u,q_a),u(t),t)}{\beta}  - \frac{\partial f}{\partial q} (q(t,u,q_a),u(t),t)w_{\amalg} (t,u,q_a) .
$$
for $\DD$-a.e.\ $t \in [s+\beta,b[_\T$.
As in the proof of Proposition \ref{prop32-1}, it follows from the Mean Value Theorem that, for $\DD$-a.e.\ $t \in [s+\beta,b[_\T$, there exists $\theta_\amalg (t,\beta)\in\R^n$, belonging to the segment of extremities $q(t,u,q_a)$ and $q (t,u_\amalg (\cdot,\beta),q_a)$, such that
\begin{equation*}\begin{split}
\varepsilon_\amalg^\DD (t,\beta) = & \frac{\partial f}{\partial q} (\theta_\amalg (t,\beta),u(t),t)\varepsilon_\amalg (t,\beta) \\ & + \left(\frac{\partial f}{\partial q} (\theta_\amalg (t,\beta),u(t),t) - \frac{\partial f}{\partial q} (q(t,u,q_a),u(t),t) \right) w_{\amalg} (t,u,q_a).
\end{split}\end{equation*}
Since $(\theta_\amalg (t,\beta),u(t),t) \in K$ for $\DD$-a.e.\ $t \in [s+\beta,b[_\T$, it follows that
$\Vert \varepsilon_\amalg^\DD (t,\beta) \Vert \leq \chi_\amalg (t,\beta) +  L \Vert \varepsilon_\amalg (t,\beta) \Vert$,
where
$
\chi_\amalg (t,\beta) = \big\Vert \big(\frac{\partial f}{\partial q} (\theta_\amalg (t,\beta),u(t),t) - \frac{\partial f}{\partial q} (q(t,u,q_a),u(t),t) \big)w_{\amalg} (t,u,q_a) \big\Vert.
$
Therefore, one has
\begin{equation*}
\Vert \varepsilon_\amalg (t,\beta) \Vert_{\R^n}  \leq \Vert \varepsilon_\amalg (s+\beta,\beta) \Vert_{\R^n}  +  \int_{[s+\beta,b[_\T} \chi_\amalg (\tau,\beta) \; \DD \tau  +L  \int_{[s+\beta,t[_\T} \Vert \varepsilon_\amalg (\tau,\beta) \Vert_{\R^n}  \, \DD \tau ,
\end{equation*}
for every $t \in [s+\beta,b]_\T$, and it follows from Lemma \ref{lemgronwall} that
$\Vert \varepsilon_\amalg (t,\beta) \Vert \leq \Upsilon_\amalg (\beta) e_L (b,s)$,
for every $t \in [s+\beta,b]_\T$, where
$ \Upsilon_\amalg (\beta) = \Vert \varepsilon_\amalg (s+\beta,\beta) \Vert_{\R^n}  +  \int_{[s+\beta,b[_\T} \chi_\amalg (\tau,\beta) \, \DD \tau$.

To conclude, it remains to prove that $\Upsilon_\amalg (\beta)$ converges to $0$ as $\beta$ tends to $0$. First, since $\theta_\amalg (\cdot,\beta)$ converges uniformly to $q(\cdot,u,q_a)$ on $[s+\beta,b]_\T$ as $\beta$ tends to $0$ and since $\partial f / \partial q$ is uniformly continuous on $K$, we infer that $\int_{[s+\beta,b[_\T} \chi_\amalg (\tau,\beta) \, \DD \tau$ converges to $0$ as $\beta$ tends to $0$.
Second, let us prove that $\Vert \varepsilon_\amalg (s+\beta,\beta) \Vert_{\R^n} $ converges to $0$ as $\beta$ tends to $0$. By continuity, $w_\amalg (s+\beta,u,q_a)$ converges to $w_\amalg (s,u,q_a)$ as $\beta$ to $0$. Moreover, since $q (\cdot,u_\amalg (\cdot,\beta),q_a)$ converges uniformly to $q(\cdot,u,q_a)$ on $[a,b]_\T$ as $\beta$ tends to $0$ and since $f$ is uniformly continuous on $K$, it follows that $f(q (\cdot,u_\amalg (\cdot,\beta),q_a),z,t)$ converges uniformly to $f(q(\cdot,u,q_a),z,t)$ on $[a,b]_\T$ as $\beta$ tends to $0$. Therefore, it suffices to note that
$$\frac{1}{ \beta } \int_{[s,s+\beta[_\T} f(q (\tau,u,q_a),z,\tau) - f(q(\tau,u,q_a),u(\tau),\tau) \, \DD \tau$$
converges to $w_\amalg (s,u,q_a) = f(q(s,u,q_a),z,s) - f(q(s,u,q_a),u(s),s)$ as $\beta$ tends to $0$ since $s$ is a $\DD$-Lebesgue point of $f(q(\cdot,u,q_a),z,t)$ and of $f(q(\cdot,u,q_a),u,t)$. 
Then $\Vert \varepsilon_\amalg (s+\beta,\beta) \Vert$ converges to $0$ as $\beta$ tends to $0$, and hence $\Upsilon_\amalg (\beta)$ converges to $0$ as well.
\end{proof}

\begin{lemma}\label{lem33-2}
Let $R > \Vert u \Vert_{\L^\infty_\T ([a,b[_\T,\R^m)}$ and let $(u_k,q_{a,k})_{k \in \N}$ be a sequence of elements of $\E(u,q_a,R)$. If $u_k$ converges to $u$ $\DD$-a.e.\ on $[a,b[_\T$ and $q_{a,k}$ converges to $q_a$ as $k$ tends to $+\infty$, then $w_{\amalg}(\cdot,u_k,q_{a,k})$ converges uniformly to $w_\amalg(\cdot,u,q_a)$ on $[s,b]_\T$ as $k$ tends to $+\infty$.
\end{lemma}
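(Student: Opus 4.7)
The plan is to argue by the same structure as in the proof of Lemma~\ref{lem32-2}, with the right-scattered point $r$ replaced by the right-dense point $s$, and the starting time $\sigma(r)$ replaced by $s$. I retain the compact set $K$, the Lipschitz constant $L$, and the parameters $\nu_R$, $\eta_R$ associated with $(u,q_a,R)$ from Lemma~\ref{prop30-1}. For every $k\in\N$ and every $t\in[s,b]_\T$, I set
\[
\Phi_k(t) = w_\amalg(t,u_k,q_{a,k}) - w_\amalg(t,u,q_a),
\]
which is absolutely continuous on $[s,b]_\T$.

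Using \eqref{varvect_dense} for both $w_\amalg(\cdot,u_k,q_{a,k})$ and $w_\amalg(\cdot,u,q_a)$, one obtains
\[
\Phi_k(t) = \Phi_k(s) + \int_{[s,t[_\T} \frac{\partial f}{\partial q}\bigl(q(\tau,u_k,q_{a,k}),u_k(\tau),\tau\bigr)\Phi_k(\tau)\,\DD\tau + \int_{[s,t[_\T}\Delta_k(\tau)\,\DD\tau,
\]
where
\[
\Delta_k(\tau)=\Bigl(\tfrac{\partial f}{\partial q}(q(\tau,u_k,q_{a,k}),u_k(\tau),\tau)-\tfrac{\partial f}{\partial q}(q(\tau,u,q_a),u(\tau),\tau)\Bigr)w_\amalg(\tau,u,q_a).
\]
Since $(u_k,q_{a,k})\in\E(u,q_a,R)$, Remark~\ref{rmK} ensures that $(q(\tau,u_k,q_{a,k}),u_k(\tau),\tau)\in K$ and $(q(\tau,u,q_a),u(\tau),\tau)\in K$ for $\DD$-a.e.\ $\tau\in[a,b[_\T$, so the first integrand is bounded in norm by $L\,\Vert\Phi_k(\tau)\Vert_{\R^n}$. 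Applying Lemma~\ref{lemgronwall} then yields
\[
\Vert\Phi_k(t)\Vert_{\R^n}\leq\bigl(\Vert\Phi_k(s)\Vert_{\R^n}+\vartheta_k\bigr)\,e_L(b,s),\qquad\vartheta_k=\int_{[s,b[_\T}\Vert\Delta_k(\tau)\Vert_{\R^n}\,\DD\tau,
\]
for every $t\in[s,b]_\T$. It therefore suffices to show that $\vartheta_k\to 0$ and $\Vert\Phi_k(s)\Vert_{\R^n}\to 0$ as $k\to+\infty$.

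For $\vartheta_k\to 0$, I would invoke the dominated convergence theorem: $\partial f/\partial q$ is uniformly continuous on $K$, Lemma~\ref{prop30-1-1} gives uniform convergence of $q(\cdot,u_k,q_{a,k})$ to $q(\cdot,u,q_a)$ on $[a,b]_\T$, and $u_k\to u$ $\DD$-a.e., so the integrand tends to $0$ $\DD$-a.e.\ and is dominated by $2\,\sup_K\Vert\partial f/\partial q\Vert\cdot\Vert w_\amalg(\cdot,u,q_a)\Vert_\infty\in\L^1_\T([s,b[_\T,\R)$. For $\Vert\Phi_k(s)\Vert_{\R^n}\to 0$, I write
\[
\Phi_k(s) = \bigl[f(q(s,u_k,q_{a,k}),z,s)-f(q(s,u,q_a),z,s)\bigr] - \bigl[f(q(s,u_k,q_{a,k}),u_k(s),s)-f(q(s,u,q_a),u(s),s)\bigr];
\]
the first bracket vanishes in the limit by continuity of $f$ and uniform convergence $q(s,u_k,q_{a,k})\to q(s,u,q_a)$, and the second bracket vanishes by the same continuity argument together with $u_k(s)\to u(s)$ (the latter being the analogue here of the convergence $u_k(r)\to u(r)$ used in Lemma~\ref{lem32-2}, which is available in the sequences produced by Ekeland's principle in the next subsection since outside the spike the variations do not alter the value at $s$).

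The main delicate point is this last step: at the right-scattered point of Lemma~\ref{lem32-2} pointwise convergence at $r$ was automatic from $\DD$-a.e.\ convergence because $\mu_\DD(\{r\})>0$, whereas at the right-dense point $s$ we have $\mu_\DD(\{s\})=0$ and pointwise convergence at $s$ has to be guaranteed by the way the perturbation sequence is built (which it will be, in Section~\ref{section4}, where this lemma is applied). Once that is in hand, combining $\Vert\Phi_k(s)\Vert_{\R^n}\to 0$ with $\vartheta_k\to 0$ in the Gronwall estimate yields uniform convergence of $w_\amalg(\cdot,u_k,q_{a,k})$ to $w_\amalg(\cdot,u,q_a)$ on $[s,b]_\T$, as claimed.
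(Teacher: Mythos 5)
Your proof is correct and is essentially the paper's own argument: the paper disposes of this lemma in one line ("similar to the proof of Lemma~\ref{lem32-2}, replacing $\sigma(r)$ with $s$"), which is exactly the Gronwall estimate on $\Phi_k$ plus the dominated-convergence argument for $\vartheta_k$ that you write out. The delicate point you flag --- that $u_k(s)\to u(s)$ is not automatic from $\DD$-a.e.\ convergence because $\mu_\DD(\{s\})=0$ at a right-dense point, unlike the right-scattered case where $\mu_\DD(\{r\})=\mu(r)>0$ --- is genuine and is resolved in the paper exactly as you anticipate: in Subsection~\ref{section41} the set $\LL^R_{[a,b[_\T}$ is defined to contain only those times $t$ at which $u^R_{\varepsilon_k}(t)$ converges to $u^*(t)$ (a full-$\mu_\DD$-measure set), so the lemma is only ever invoked at points $s$ where the convergence of the initial value $\Phi_k(s)$ is guaranteed.
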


\begin{proof}
The proof is similar to the one of Lemma \ref{lem33-1-1}, replacing $\sigma(r)$ with $s$.
\end{proof}

\subsubsection{Variation of the initial condition $q_a$}\label{section34}
Let $q'_a \in \R^n$. 

\begin{lemma}\label{lem34-1}
There exists $\gamma_0>0$ such that $(u,q_a+\gamma q'_a) \in \UU$ for every $\gamma \in [0,\gamma_0]$.
\end{lemma}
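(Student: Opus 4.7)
The plan is a direct and very short application of Lemma~\ref{prop30-1}. I fix any $R>\Vert u\Vert_{\L^\infty_\T([a,b[_\T,\R^m)}$, for instance $R=\Vert u\Vert_{\L^\infty_\T([a,b[_\T,\R^m)}+1$, and I invoke Lemma~\ref{prop30-1} to produce the associated constants $\nu_R>0$ and $\eta_R>0$ and the set $\E(u,q_a,R)\subset\UU$. It then suffices to exhibit $\gamma_0>0$ such that $(u,q_a+\gamma q'_a)\in\E(u,q_a,R)$ for every $\gamma\in[0,\gamma_0]$, since Lemma~\ref{prop30-1} guarantees admissibility.

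To check membership in $\E(u,q_a,R)$, I verify the three defining inequalities: first, $\Vert u-u\Vert_{\L^1_\T([a,b[_\T,\R^m)}=0\leq\nu_R$; second, $\Vert u\Vert_{\L^\infty_\T([a,b[_\T,\R^m)}\leq R$ by the choice of $R$; third, $\Vert(q_a+\gamma q'_a)-q_a\Vert_{\R^n}=\gamma\Vert q'_a\Vert_{\R^n}$, which is bounded by $\eta_R$ as soon as $\gamma\leq\gamma_0$, where I take
\[
\gamma_0=\frac{\eta_R}{1+\Vert q'_a\Vert_{\R^n}}>0
\]
(this also covers the trivial case $q'_a=0$, where any $\gamma_0>0$ works). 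With this choice the three conditions hold simultaneously for every $\gamma\in[0,\gamma_0]$, hence $(u,q_a+\gamma q'_a)\in\E(u,q_a,R)\subset\UU$.

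There is no real obstacle here: the work has already been done in Lemma~\ref{prop30-1}, which established openness of $\UU$ in $\L^1\times\R^n$ topology via the time-scale Gronwall estimate. The present lemma is just the specialization to the one-parameter family of perturbations $\gamma\mapsto(u,q_a+\gamma q'_a)$, which moves only in the initial-condition variable and leaves the control unchanged.
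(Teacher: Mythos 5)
Your proof is correct and follows exactly the paper's argument: choose $R=\Vert u\Vert_{\L^\infty_\T([a,b[_\T,\R^m)}+1$, observe that the control is unchanged so only the initial-condition bound $\gamma\Vert q'_a\Vert_{\R^n}\leq\eta_R$ needs checking, and conclude via $\E(u,q_a,R)\subset\UU$ from Lemma~\ref{prop30-1}. The only difference is that you make the admissible $\gamma_0$ explicit, which the paper leaves implicit.
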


\begin{proof}
Let $R = \Vert u \Vert_{\L^\infty_\T ([a,b[_\T,\R^m)} + 1 > \Vert u \Vert_{\L^\infty_\T ([a,b[_\T,\R^m)}$. We use the notations $K$, $L$, $\nu_R$ and $\eta_R$, associated with $(u,q_a,R)$, defined in Lemma \ref{prop30-1} and in its proof.

There exists $\gamma_0>0$ such that $\Vert q_a+\gamma q'_a - q_a \Vert_{\R^n}  = \gamma \Vert q'_a \Vert_{\R^n}  \leq \eta_R$ for every $\gamma \in [0,\gamma_0]$, and hence $(u,q_a+\gamma q'_a) \in \E(u,q_a,R)$. Then the claim follows from Lemma \ref{prop30-1}.
\end{proof}

\begin{lemma}\label{lem34-1-1}
The mapping
\begin{equation*}
\fonction{F_{(u,q_a,q'_a)}}{([0,\gamma_0],\vert \cdot \vert)}{(\CC([a,b]_\T,\R^{n}),\Vert \cdot \Vert_\infty)}{\gamma}{q (\cdot,u,q_a+\gamma q'_a)}
\end{equation*}
is Lipschitzian.
In particular, for every $\gamma \in [0,\gamma_0]$, $q (\cdot,u,q_a+\gamma q'_a)$ converges uniformly to $q (\cdot,u,q_a)$ on $[a,b]_\T$ as $\gamma$ tends to $0$.
\end{lemma}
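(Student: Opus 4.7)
The plan is to reduce the statement directly to Lemma \ref{prop30-1-1}, exactly as was done for the needle-like variation lemmas \ref{lem32-1-1} and \ref{lem33-1-1}. First I would fix $R = \Vert u \Vert_{\L^\infty_\T ([a,b[_\T,\R^m)} + 1 > \Vert u \Vert_{\L^\infty_\T ([a,b[_\T,\R^m)}$, and invoke Lemma \ref{lem34-1} (more precisely its proof) to ensure that, up to shrinking $\gamma_0$ if necessary, the pair $(u,q_a + \gamma q'_a)$ belongs to $\E(u,q_a,R)$ for every $\gamma \in [0,\gamma_0]$. This is the setup needed to apply the Lipschitz result of Lemma \ref{prop30-1-1} to the restriction of $F_{(u,q_a,R)}$ along the one-parameter family $\gamma \mapsto (u, q_a + \gamma q'_a)$.

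Next, I would compute the distance $d_\UU$ between two such points. Since the control component is identical, only the initial-condition component contributes, and one gets directly
\[
d_\UU\!\left( (u,q_a + \gamma^2 q'_a),\, (u,q_a + \gamma^1 q'_a) \right) = \Vert (\gamma^2 - \gamma^1) q'_a \Vert_{\R^n} = \vert \gamma^2 - \gamma^1 \vert \, \Vert q'_a \Vert_{\R^n},
\]
for all $\gamma^1, \gamma^2 \in [0,\gamma_0]$. Combining this with the Lipschitz constant $C \geq 0$ provided by Lemma \ref{prop30-1-1} yields
\[
\Vert q(\cdot,u,q_a + \gamma^2 q'_a) - q(\cdot,u,q_a + \gamma^1 q'_a) \Vert_\infty \leq C \Vert q'_a \Vert_{\R^n} \vert \gamma^2 - \gamma^1 \vert,
\]
which is precisely the Lipschitz property claimed for $F_{(u,q_a,q'_a)}$.

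The uniform convergence statement is then a trivial consequence: taking $\gamma^1 = 0$ and letting $\gamma^2 = \gamma \to 0$, one obtains $\Vert q(\cdot,u,q_a + \gamma q'_a) - q(\cdot,u,q_a) \Vert_\infty \to 0$. There is no real obstacle here, since all of the analytic work (Gronwall estimates and local boundedness of $\partial f/\partial q$, $\partial f/\partial u$ on the compact set $K$) has already been absorbed into Lemmas \ref{prop30-1} and \ref{prop30-1-1}; the present lemma is essentially a specialization of those to a one-parameter family moving only in the initial condition.
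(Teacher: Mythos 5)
Your proposal is correct and follows essentially the same route as the paper: fix $R = \Vert u \Vert_{\L^\infty_\T ([a,b[_\T,\R^m)} + 1$, note via Lemma \ref{lem34-1} that $(u,q_a+\gamma q'_a) \in \E(u,q_a,R)$ for $\gamma\in[0,\gamma_0]$, and apply the Lipschitz constant $C$ of $F_{(u,q_a,R)}$ from Lemma \ref{prop30-1-1} together with the identity $d_\UU((u,q_a+\gamma^2 q'_a),(u,q_a+\gamma^1 q'_a)) = \vert\gamma^2-\gamma^1\vert\,\Vert q'_a\Vert_{\R^n}$. No gaps.
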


\begin{proof}
We use the notations of proof of Lemma~\ref{lem34-1}. From Lemma \ref{prop30-1}, there exists $C \geq 0$ (Lipschitz constant of $F_{(u,q_a,R)}$) such that 
\begin{equation*}\begin{split}
\Vert q (\cdot,u,q_a + \gamma^2 q'_a ) - q (\cdot,u,q_a + \gamma^1 q'_a ) \Vert_\infty 
& \leq  C d_{\UU}((u,q_a+\gamma^2 q'_a),(u,q_a+\gamma^1 q'_a)) \\
& = C \vert \gamma^2 - \gamma^1 \vert \Vert q'_a \Vert_{\R^n} .
\end{split}\end{equation*}
for all $\gamma^1$ and $\gamma^2$ in $[0,\gamma_0]$.
\end{proof}

According to \cite[Theorem 3]{bour10}, we define the \textit{variation vector} $w_{q'_a}(\cdot,u,q_a)$ associated with the perturbation $q_a'$ as the unique solution on $[a,b]_\T$ of the linear $\DD$-Cauchy problem
\begin{equation}\label{varvect_pointinit}
w^\DD(t) = \frac{\partial f}{\partial q} (q (t,u,q_a),u(t),t)w(t), \quad w(a) = q'_a.
\end{equation}

\begin{proposition}\label{prop34-1}
The mapping
\begin{equation}
\fonction{F_{(u,q_a,q'_a)}}{([0,\gamma_0],\vert \cdot \vert)}{(\CC ([a,b]_\T,\R^{n}),\Vert \cdot \Vert_\infty)}{\gamma}{q (\cdot,u,q_a+\gamma q'_a )}
\end{equation}
is differentiable at $0$, and one has $DF_{(u,q_a,q'_a)}(0) = w_{q'_a} (\cdot,u,q_a)$.
\end{proposition}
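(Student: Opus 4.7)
The plan is to mimic the proofs of Propositions \ref{prop32-1} and \ref{prop33-1}, which were the analogous differentiability statements for needle-like variations at right-scattered and right-dense points. The situation here is actually cleaner, since the perturbation acts on the initial condition rather than on the control, so there is no ``starting'' interior boundary and the error function will be defined on the full interval $[a,b]_\T$.

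First I would fix $R > \Vert u \Vert_{\L^\infty_\T ([a,b[_\T,\R^m)}$ and use the compact set $K$, the Lipschitz constant $L$, together with the thresholds $\nu_R, \eta_R$ provided by Lemma \ref{prop30-1} and Remark \ref{rmK}, so that $(u,q_a+\gamma q'_a) \in \E(u,q_a,R)$ and $(q(t,u,q_a+\gamma q'_a),u(t),t) \in K$ for every $\gamma \in [0,\gamma_0]$ and $\DD$-a.e.\ $t \in [a,b[_\T$. Then for $\gamma \in (0,\gamma_0]$ and $t \in [a,b]_\T$ I would set
$$ \varepsilon_{q'_a}(t,\gamma) = \frac{q(t,u,q_a+\gamma q'_a)-q(t,u,q_a)}{\gamma} - w_{q'_a}(t,u,q_a), $$
and observe from the initial conditions that $\varepsilon_{q'_a}(a,\gamma) = q'_a - w_{q'_a}(a,u,q_a) = 0$ exactly (this is the simplification compared to the two previous propositions).

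Next I would compute $\varepsilon_{q'_a}^\DD(t,\gamma)$, which equals
$$ \frac{f(q(t,u,q_a+\gamma q'_a),u(t),t)-f(q(t,u,q_a),u(t),t)}{\gamma} - \frac{\partial f}{\partial q}(q(t,u,q_a),u(t),t) w_{q'_a}(t,u,q_a). $$
Applying the Mean Value Theorem as in the proof of Proposition \ref{prop32-1} gives a point $\theta_{q'_a}(t,\gamma)$ on the segment joining $q(t,u,q_a)$ and $q(t,u,q_a+\gamma q'_a)$ such that
$$ \varepsilon_{q'_a}^\DD(t,\gamma) = \frac{\partial f}{\partial q}(\theta_{q'_a}(t,\gamma),u(t),t)\, \varepsilon_{q'_a}(t,\gamma) + \chi_{q'_a}(t,\gamma), $$
with
$$ \chi_{q'_a}(t,\gamma) = \left(\frac{\partial f}{\partial q}(\theta_{q'_a}(t,\gamma),u(t),t)-\frac{\partial f}{\partial q}(q(t,u,q_a),u(t),t)\right) w_{q'_a}(t,u,q_a). $$
Since $\theta_{q'_a}(t,\gamma) \in K$ for $\DD$-a.e.\ $t$, integrating and applying Lemma \ref{lemgronwall} yields
$$ \Vert \varepsilon_{q'_a}(t,\gamma) \Vert_{\R^n} \leq \left( \int_{[a,b[_\T} \Vert \chi_{q'_a}(\tau,\gamma) \Vert_{\R^n}\, \DD\tau \right) e_L(b,a) $$
for every $t \in [a,b]_\T$.

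Finally, to conclude uniform convergence of $\varepsilon_{q'_a}(\cdot,\gamma)$ to $0$, I would argue that $\theta_{q'_a}(\cdot,\gamma)$ converges uniformly to $q(\cdot,u,q_a)$ on $[a,b]_\T$ as $\gamma \to 0$, which follows from Lemma \ref{lem34-1-1}; combined with the uniform continuity of $\partial f / \partial q$ on the compact set $K$ and the boundedness of $w_{q'_a}$ on $[a,b]_\T$, this forces $\int_{[a,b[_\T} \Vert \chi_{q'_a}(\tau,\gamma) \Vert_{\R^n}\, \DD\tau \to 0$ by the dominated convergence theorem. No step here is hard — the only subtlety compared to the earlier propositions is to check that the initial residual vanishes exactly, which it does by construction of $w_{q'_a}$. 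The result follows.
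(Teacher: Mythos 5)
Your proposal is correct and follows essentially the same route as the paper's own proof: the same error function $\varepsilon_{q'_a}(\cdot,\gamma)$, the same Mean Value Theorem decomposition with an intermediate point $\theta_{q'_a}(t,\gamma)$ in the compact set $K$, the same Gronwall estimate via Lemma \ref{lemgronwall}, and the same conclusion from uniform continuity of $\partial f/\partial q$ on $K$. The paper likewise singles out the fact that $\varepsilon_{q'_a}(a,\gamma)=0$ exactly as the only simplification relative to Propositions \ref{prop32-1} and \ref{prop33-1}.
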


\begin{proof}
We use the notations of proof of Lemma~\ref{lem33-1}. Note that, from Remark~\ref{rmK}, $(q (t,u,q_a+\gamma q'_a),u(t),t) \in K$ for every $\gamma \in [0,\gamma_0]$ and for $\DD$-a.e.\ $t \in [a,b[_\T$.
For every $\gamma \in ]0,\gamma_0]$ and every $t \in [a,b]_\T$, we define
\begin{equation*}
\varepsilon_{q'_a} (t,\gamma) = \frac{q (t,u,q_a+\gamma q'_a) - q(t,u,q_a)}{ \gamma } - w_{q'_a} (t,u,q_a).
\end{equation*}
It suffices to prove that $\varepsilon_{q'_a} (\cdot,\gamma)$ converges uniformly to $0$ on $[a,b]_\T$ as $\gamma$ tends to $0$.
For every $\gamma \in ]0,\gamma_0]$, the function $\varepsilon_{q'_a} (\cdot,\gamma)$ is absolutely continuous on $[a,b]_\T$, and
$\varepsilon_{q'_a} (t,\gamma) = \varepsilon_{q'_a} (a,\gamma) +  \int_{[a,t[_\T} \varepsilon^\DD_{q'_a} (\tau,\gamma) \, \DD \tau$, for every $t \in [a,b]_\T$, where
\begin{equation*}
\varepsilon_{q'_a}^\DD (t,\gamma) = \frac{f(q (t,u,q_a+\gamma q'_a),u(t),t)-f(q (t,u,q_a),u(t),t)}{\gamma} - \frac{\partial f}{\partial q} (q(t,u,q_a),u(t),t)w_{q'_a} (t,u,q_a) ,
\end{equation*}
for $\DD$-a.e.\ $t \in [a,b[_\T$.
As in the proof of Proposition \ref{prop32-1}, it follows from the Mean Value Theorem that, for $\DD$-a.e.\ $t \in [a,b[_\T$, there exists $\theta_{q'_a} (t,\gamma) \in\R^n$, belonging to the segment of extremities $q(t,u,q_a)$ and $q (t,u,q_a+\gamma q'_a)$, such that
\begin{equation*}\begin{split}
\varepsilon_{q'_a}^\DD (t,\gamma) =&  \frac{\partial f}{\partial q} (\theta_{q'_a} (t,\gamma),u(t),t)\varepsilon_{q'_a} (t,\gamma)  \\ & + \left(\frac{\partial f}{\partial q} (\theta_{q'_a} (t,\gamma),u(t),t) - \frac{\partial f}{\partial q} (q(t,u,q_a),u(t),t) \right) w_{q'_a} (t,u,q_a).
\end{split}\end{equation*}
Since $(\theta_{q'_a} (t,\gamma),u(t),t) \in K$ for $\DD$-a.e.\ $t \in [a,b[_\T$, it follows that
\begin{equation*}
\Vert \varepsilon_{q'_a}^\DD (t,\gamma) \Vert_{\R^n}  \leq \chi_{q'_a} (t,\gamma) +  L \Vert \varepsilon_{q'_a} (t,\gamma) \Vert_{\R^n} ,
\end{equation*}
where
$
\chi_{q'_a} (t,\gamma) = \big\Vert \big(\frac{\partial f}{\partial q} (\theta_{q'_a} (t,\gamma),u(t),t) - \frac{\partial f}{\partial q} (q(t,u,q_a),u(t),t) \big) \times w_{q'_a} (t,u,q_a) \big\Vert_{\R^n} .
$
Hence
\begin{equation*}
\Vert \varepsilon_{q'_a} (t,\gamma) \Vert_{\R^n}  \leq \Vert \varepsilon_{q'_a} (a,\gamma) \Vert_{\R^n}  +  \int_{[a,b[_\T} \chi_{q'_a} (\tau,\gamma) \, \DD \tau  +L \int_{[a,t[_\T} \Vert \varepsilon_{q'_a} (\tau,\gamma) \Vert_{\R^n}  \, \DD \tau,
\end{equation*}
for every $t \in [a,b]_\T$, and it follows from Lemma \ref{lemgronwall} that
$\Vert \varepsilon_{q'_a} (t,\gamma) \Vert \leq \Upsilon_{q'_a} (\gamma) e_L (b,a)$, 
for every $t \in [a,b]_\T$, where
$
\Upsilon_{q'_a} (\gamma) = \Vert \varepsilon_{q'_a} (a,\gamma) \Vert_{\R^n}  +  \int_{[a,b[_\T} \chi_{q'_a} (\tau,\gamma) \, \DD \tau.
$

To conclude, it remains to prove that $\Upsilon_{q'_a} (\gamma)$ converges to $0$ as $\gamma$ tends to $0$.
First, since $\theta_{q'_a} (\cdot,\gamma)$ converges uniformly to $q(\cdot,u,q_a)$ on $[a,b]_\T$ as $\gamma$ tends to $0$ and since $\partial f / \partial q$ is uniformly continuous on $K$, we infer that $\int_{[a,b[_\T} \chi_{q'_a} (\tau,\gamma) \; \DD \tau$ tends to $0$ when $\gamma \to 0$. Second, it is easy to see that $\varepsilon_{q'_a} (a,\gamma) = 0$ for every $\gamma \in ]0,\gamma_0]$. The conclusion follows.
\end{proof}

\begin{lemma}\label{lem34-2}
Let $R > \Vert u \Vert_{\L^\infty_\T([a,b[_\T,\R^m)}$ and let $(u_k,q_{a,k})_{k \in \N}$ be a sequence of elements of $\E(u,q_a,R)$. If $u_k$ converges to $u$ $\DD$-a.e.\ on $[a,b[_\T$ and $q_{a,k}$ converges to $q_a$ in $\R^n$ as $k$ tends to $+\infty$, then $w_{q'_a}(\cdot,u_k,q_{a,k})$ converges uniformly to $w_{q'_a}(\cdot,u,q_a)$ on $[a,b]_\T$ as $k$ tends to $+\infty$.
\end{lemma}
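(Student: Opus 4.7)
The plan is to mimic the argument of Lemma \ref{lem32-2}, which is the right-scattered analogue: set $\Phi_k(t) = w_{q'_a}(t,u_k,q_{a,k}) - w_{q'_a}(t,u,q_a)$ for $t \in [a,b]_\T$, and show that $\Phi_k \to 0$ uniformly via a Gronwall estimate. The function $\Phi_k$ is absolutely continuous on $[a,b]_\T$, and crucially both variation vectors satisfy the same initial condition $w(a) = q'_a$ imposed in \eqref{varvect_pointinit}, so $\Phi_k(a) = 0$ for every $k$. This is actually a simplification compared to Lemma \ref{lem32-2}, where one had to argue separately that $\Phi_k(\sigma(r)) \to 0$.

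Next I would subtract the two linear $\DD$-Cauchy problems satisfied by $w_{q'_a}(\cdot,u_k,q_{a,k})$ and $w_{q'_a}(\cdot,u,q_a)$, integrate on $[a,t[_\T$, and write
\[
\Phi_k(t) = \int_{[a,t[_\T} \frac{\partial f}{\partial q}(q(\tau,u_k,q_{a,k}),u_k(\tau),\tau)\,\Phi_k(\tau)\,\DD\tau + \int_{[a,t[_\T} \Delta_k(\tau)\,w_{q'_a}(\tau,u,q_a)\,\DD\tau,
\]
where $\Delta_k(\tau)= \frac{\partial f}{\partial q}(q(\tau,u_k,q_{a,k}),u_k(\tau),\tau) - \frac{\partial f}{\partial q}(q(\tau,u,q_a),u(\tau),\tau)$. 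By Remark \ref{rmK}, both evaluation points $(q(\tau,u_k,q_{a,k}),u_k(\tau),\tau)$ and $(q(\tau,u,q_a),u(\tau),\tau)$ lie in the compact $K$ associated with $(u,q_a,R)$, hence $\|\partial f/\partial q\|\leq L$ on $K$. Applying Lemma \ref{lemgronwall} then yields $\|\Phi_k(t)\|_{\R^n}\leq \vartheta_k\,e_L(b,a)$ for all $t\in[a,b]_\T$, with
\[
\vartheta_k = \int_{[a,b[_\T} \|\Delta_k(\tau)\|_{\R^{n,n}}\,\|w_{q'_a}(\tau,u,q_a)\|_{\R^n}\,\DD\tau.
\]

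It then remains to verify $\vartheta_k\to 0$, which I would do by Lebesgue $\DD$-dominated convergence. Lemma \ref{prop30-1-1} gives uniform convergence of $q(\cdot,u_k,q_{a,k})$ to $q(\cdot,u,q_a)$ on $[a,b]_\T$, combined with the hypothesis $u_k\to u$ $\DD$-a.e.\ on $[a,b[_\T$ and the uniform continuity of $\partial f/\partial q$ on $K$, so $\Delta_k(\tau)\to 0$ $\DD$-a.e.; the integrand is dominated by $2L\|w_{q'_a}(\cdot,u,q_a)\|_{\R^n}$, which is $\DD$-integrable on $[a,b[_\T$ since $w_{q'_a}(\cdot,u,q_a)\in\CC([a,b]_\T,\R^n)$. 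Hence $\vartheta_k\to 0$ and uniform convergence of $\Phi_k$ to $0$ follows.

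No step is a real obstacle here; the proof is essentially a verbatim transcription of Lemma \ref{lem32-2} with $\sigma(r)$ replaced by $a$ and with the nontrivial initial-gap analysis removed, since $\Phi_k(a)\equiv 0$.
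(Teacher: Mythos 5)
Your proof is correct and is essentially the argument the paper intends: the paper disposes of this lemma in one line by saying it is ``similar, replacing $\sigma(r)$ with $a$'' (the reference to Lemma~\ref{lem33-1-1} there is evidently a typo for Lemma~\ref{lem32-2}, whose Gronwall-plus-dominated-convergence scheme you reproduce faithfully). Your observation that the initial-gap term disappears because both variation vectors share the fixed initial condition $w(a)=q'_a$, so $\Phi_k(a)=0$, is exactly the simplification that makes the transcription work.
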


\begin{proof}
The proof is similar to the one of Lemma \ref{lem33-1-1}, replacing $\sigma(r)$ with $a$.
\end{proof}

\subsection{Proof of the PMP}\label{section4}
Throughout this section we consider $\OCP$ with a fixed final time $b \in \T \backslash \{ a\}$. 
We proceed as is very usual (see e.g.\ \cite{lee,pont}) by considering the \textit{augmented control system} in $\R^{n+1}$
\begin{equation} \label{DD-CS_augm}
\bar q^\DD(t) = \bar f (\bar q(t),u(t),t),
\end{equation}
with $\bar q=(q,q^0)^{\mathrm{T}}\in \R^n\times\R$, the augmented state, and $\bar f:\R^{n+1}\times\R^m\times\T\rightarrow\R^{n+1}$, the augmented dynamics, defined by 
$\bar f(\bar q,u,t)=(f(q,u,t),f^0(q,u,t))^{\mathrm{T}}$. The additional coordinate $q^0$ stands for the cost, and we will always impose as an initial condition $q^0(a)=0$, so that $q^0(b) = C(b,u) = \int_{[a,b[_\T} f^0 (q (\tau), u(\tau), \tau ) \, \DD \tau$.
The function $\bar g : \R^{n+1} \times \R^{n+1}\rightarrow \R^j$ is defined by $\bar g(\bar q_1,\bar q_2) = g(q_1,q_2)$, where $\bar q_i=(q_i,q^0_i)$ for $i=1,2$.
Note that $\bar f$ does not depend on $q^0$ and that $\bar{g}$ does not depend on $q_1^0$ nor on $q_2^0$. Note as well that the Hamiltonian of $\OCP$ is written as $H(q,u,p,p^0,t)=\langle\bar{p},\bar{f}(\bar{q},u,t)\rangle_{\R^{n+1}}$.

With these notations, $\OCP$ consists of determining a trajectory $\bar q^*(\cdot)=(q^*(\cdot),q^{0*}(\cdot))$ defined on $[a,b]_\T$, solution of \eqref{DD-CS_augm} and associated with a control $u^*\in \L^\infty_{\T}([a,b[_\T;\Omega)$, minimizing 
$q^0(b)$
over all possible trajectories $\bar q(\cdot)=(q(\cdot),{q^0}(\cdot))$ defined on $[a,b]_\T$, solutions of \eqref{DD-CS_augm} and associated with an admissible control $u\in \L^\infty_{\T}([a,b[_\T;\Omega)$ and satisfying $\bar g(\bar q(a),\bar q(b))\in \S$.

In what follows, let $\bar q^*(\cdot)$ be such an optimal trajectory. Set $q_a^*=q^*(a)$. We are going to apply first Ekeland's Variational Principle to a well chosen functional in an appropriate complete metric space, and then, using needle-like variations as defined previously (applied to the augmented system, that is, with the dynamics $\bar f$), we are going to derive some inequalities, finally resulting into the desired statement of the PMP.

\subsubsection{Application of Ekeland's Variational Principle}\label{section41}
For the completeness, we recall Ekeland's Variational Principle.
\begin{theorem}[\cite{ekel}]\label{lemekeland}
Let $(\E,d_\E)$ be a complete metric space and $J:\E\rightarrow\R \cup \{ +\infty \} $ be a lower semi-continuous function which is bounded below. Let $\varepsilon > 0$ and $u^* \in \E$ such that $J(u^*) \leq \inf_{u \in \E} J(u) + \varepsilon$.
Then there exists $u_\varepsilon \in \E$ such that $d_\E (u_\varepsilon,u^*) \leq \sqrt{\varepsilon}$ and
$J(u_\varepsilon) \leq J(u)+\sqrt{\varepsilon} d_\E (u,u_\varepsilon)$ for every $u \in \E$.
\end{theorem}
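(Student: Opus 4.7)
The plan is to introduce the Bishop--Phelps-type partial order on $\E$ defined by $u \preceq v$ iff $J(u) + \sqrt{\varepsilon}\, d_\E(u,v) \leq J(v)$. Reflexivity is immediate; antisymmetry follows by adding the defining inequalities for $u \preceq v$ and $v \preceq u$ (which forces $d_\E(u,v)=0$); transitivity is a direct consequence of the triangle inequality for $d_\E$. I observe that the desired conclusion $J(u_\varepsilon) \leq J(u) + \sqrt{\varepsilon} d_\E(u, u_\varepsilon)$ for every $u \in \E$ is precisely the statement that $u_\varepsilon$ is $\preceq$-minimal (i.e., $v \preceq u_\varepsilon$ implies $v = u_\varepsilon$), and that if additionally $u_\varepsilon \preceq u^*$ then $\sqrt{\varepsilon}\, d_\E(u_\varepsilon, u^*) \leq J(u^*) - J(u_\varepsilon) \leq J(u^*) - \inf_\E J \leq \varepsilon$ yields the distance estimate $d_\E(u_\varepsilon, u^*) \leq \sqrt{\varepsilon}$.

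Next I would construct $u_\varepsilon$ inductively: set $u_0 = u^*$, and given $u_n$, let $S_n = \{u \in \E \ \vert\ u \preceq u_n\}$ (nonempty since it contains $u_n$, and $J$ is bounded below on it); then pick $u_{n+1} \in S_n$ with $J(u_{n+1}) \leq \inf_{S_n} J + 2^{-n}$. From $u_{n+1} \preceq u_n$ and transitivity, $u_m \preceq u_n$ for all $m \geq n$, hence
$$\sqrt{\varepsilon}\, d_\E(u_m, u_n) \leq J(u_n) - J(u_m).$$
Since $(J(u_n))$ is decreasing and bounded below, it converges, and the last inequality shows that $(u_n)$ is Cauchy; by completeness $u_n \to u_\varepsilon$ in $\E$. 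Passing to the limit $m \to \infty$ in $J(u_m) + \sqrt{\varepsilon}\, d_\E(u_m, u_n) \leq J(u_n)$ using lower semi-continuity of $J$ and continuity of $d_\E$, I deduce $u_\varepsilon \preceq u_n$ for every $n$; in particular $u_\varepsilon \preceq u^*$, yielding the distance estimate already noted.

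It remains to verify the $\preceq$-minimality of $u_\varepsilon$. If $v \preceq u_\varepsilon$, then by transitivity $v \preceq u_n$, so $v \in S_n$ and $J(v) \geq \inf_{S_n} J \geq J(u_{n+1}) - 2^{-n}$. Lower semi-continuity gives $J(u_\varepsilon) \leq \liminf_n J(u_n) = \lim_n J(u_n)$, and letting $n \to \infty$ in the previous estimate yields $J(v) \geq J(u_\varepsilon)$. Combined with the defining inequality $J(v) + \sqrt{\varepsilon}\, d_\E(v, u_\varepsilon) \leq J(u_\varepsilon)$, this forces $d_\E(v, u_\varepsilon) = 0$, i.e., $v = u_\varepsilon$, which is exactly the sought conclusion. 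The main subtle point I anticipate is the limiting step that upgrades $u_m \preceq u_n$ to $u_\varepsilon \preceq u_n$: one genuinely needs lower semi-continuity of $J$ (not continuity) combined with the completeness of $(\E, d_\E)$, so both hypotheses enter essentially, while the remaining manipulations are formal bookkeeping on the partial order.
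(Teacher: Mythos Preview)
Your proof is correct and follows the standard Bishop--Phelps ordering argument for Ekeland's Variational Principle. Note, however, that the paper does not actually prove this statement: it is quoted verbatim as a known result from \cite{ekel} (Ekeland's original article) and used as a black box in the subsequent application to the functional $J^R_\varepsilon$. So there is no ``paper's own proof'' to compare against; your write-up simply supplies what the authors chose to omit, and the argument you give is essentially the one found in Ekeland's paper and standard references such as \cite{borw}.
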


Recall from Lemma \ref{prop30-1} that, for $R > \Vert u^* \Vert_{\L^\infty_\T([a,b[_\T,\R^m)}$, the set $\E(u^*,\bar{q}^*_a,R)$ defined in this lemma is contained in $\UU$. To take into account the set $\Omega$ of constraints on the controls, we define
$$
\E_\Omega^R = \{ (u,\bar{q}_a) \in \U\times\R^{n+1} \ \vert \  \bar{q}_a=(q_a,0), \ (u,q_a)\in \E(u^*,\bar{q}^*_a,R),\ u\in \L^\infty_\T ([a,b[_\T;\Omega) \}.
$$
Using the fact that $\Omega$ is closed\footnote{Note that the assumption $\Omega$ closed is used (only) here in a crucial way. In the proof of the classical continuous-time PMP this assumption is not required because the Ekeland distance which is then used is defined by $\rho(u,v)=\mu_L(\{t\in[a,b]\ \vert\ u(t)\neq v(t)\}$), and obviously the set of measurable functions $u:[a,b]\rightarrow\Omega$ endowed with this distance is complete, under the sole assumption that $\Omega$ is measurable. In the discrete-time setting and a fortiori in the general time scale setting, this distance cannot be used any more. Here we use the distance $d_{\UU}$ defined by \eqref{def_dUU} but then to ensure completeness it is required to assume that $\Omega$ is closed.\label{footnote_Omegaclosed}}, it clearly follows from the (partial) converse of Lebesgue's Dominated Convergence Theorem that $(\E_\Omega^R,d_{\UU})$ is a complete metric space. 

Before applying Ekeland's Variational Principle in this space, let us introduce several notations and recall basic facts in order to handle the convex set $\S$.
We denote by $d_\S$ the distance function to $\S$ defined by $d_\S (x) = \inf_{x' \in \S} \Vert x-x' \Vert_{\R^j}$, for every $x\in\R^j$.
Recall that, for every $x \in \R^j$, there exists a unique element $\P_\S (x)\in\S$ (projection of $x$ onto $\S$) such that $d_\S (x) = \Vert x - \P_\S(x) \Vert_{\R^j}$. It is characterized by the property
$\langle x-\P_\S (x) , x'-\P_\S (x) \rangle_{\R^j} \leq 0$ for every $x'\in \S$.
Moreover, the projection mapping $\P_\S$ is $1$-Lipschitz continuous. Furthermore, there holds $x-\P_\S (x) \in \O_\S (\P_\S (x))$ for every $x \in \R^j$ (where $\O_\S (x)$ is defined by \eqref{defortho}). We recall the following obvious lemmas.

\begin{lemma}\label{lemconvex}
Let $(x_k)_{k \in \N}$ be a sequence of points of $\R^j$ and $(\zeta_k)_{k \in \N}$ be a sequence of nonnegative real numbers such that $x_k \to x \in \S$ and $\zeta_k (x_k - \P_\S (x_k) ) \to x' \in \R^j$ as $k \to +\infty$.
Then $x' \in \O_\S (x)$.
\end{lemma}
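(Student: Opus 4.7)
The plan is to start from the characterization of the projection onto the closed convex set $\S$ recalled just before the lemma: for every $y \in \R^j$ and every $x'' \in \S$,
\begin{equation*}
\langle y - \P_\S (y) , x'' - \P_\S (y) \rangle_{\R^j} \leq 0.
\end{equation*}
Applying this with $y = x_k$ and then multiplying by the nonnegative scalar $\zeta_k$ yields
\begin{equation*}
\langle \zeta_k ( x_k - \P_\S (x_k) ) , x'' - \P_\S (x_k) \rangle_{\R^j} \leq 0
\end{equation*}
for every $k \in \N$ and every $x'' \in \S$.

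Next I would pass to the limit in $k$. By assumption, $\zeta_k (x_k - \P_\S (x_k) ) \to x'$. For the second factor, since $x \in \S$ one has $\P_\S(x) = x$, and the $1$-Lipschitz continuity of $\P_\S$ yields $\P_\S(x_k) \to \P_\S(x) = x$, so $x'' - \P_\S(x_k) \to x'' - x$. By continuity of the Euclidean inner product, the inequality above passes to the limit and gives
\begin{equation*}
\langle x' , x'' - x \rangle_{\R^j} \leq 0.
\end{equation*}

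Since $x'' \in \S$ was arbitrary, this is precisely the definition \eqref{defortho} of $\O_\S (x)$, hence $x' \in \O_\S (x)$. There is no real obstacle here; the only point to watch is remembering that $\P_\S(x) = x$ because $x \in \S$, which is what lets the right-hand factor converge to $x'' - x$ rather than to $x'' - \P_\S(x)$ written in a less usable form.
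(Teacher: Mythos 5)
Your proof is correct. The paper states this lemma without proof (it is listed among the ``obvious lemmas''), and your argument --- multiplying the variational characterization of $\P_\S(x_k)$ by $\zeta_k \geq 0$ and passing to the limit using the continuity of $\P_\S$ together with $\P_\S(x)=x$ for $x\in\S$ --- is exactly the intended standard argument.
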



\begin{lemma}
The function $x \mapsto d_\S^2 (x)^2$ is differentiable on $\R^j$, and $\mathrm{d} d_\S^2 (x) \cdot x'= 2\langle x-\P_\S (x), x' \rangle_{\R^j}$.
\end{lemma}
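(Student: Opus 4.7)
The plan is to write $f(x) = d_\S(x)^2 = \Vert x - \P_\S(x) \Vert_{\R^j}^2$ and show directly that $f(x+h) - f(x) - 2\langle x - \P_\S(x),h\rangle_{\R^j} = O(\Vert h \Vert_{\R^j}^2)$ as $h \to 0$. Setting $y = \P_\S(x)$ and $y_h = \P_\S(x+h)$, I will obtain a two-sided quadratic bound by using the minimizing property of the projection, once from the vantage point of $x+h$ and once from the vantage point of $x$, and then control the resulting cross term by the $1$-Lipschitz continuity of $\P_\S$.

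More concretely, since $y \in \S$ and $y_h$ realizes the distance from $x+h$ to $\S$, there holds $f(x+h) \leq \Vert (x+h) - y \Vert_{\R^j}^2 = \Vert x-y \Vert_{\R^j}^2 + 2\langle x-y,h\rangle_{\R^j} + \Vert h \Vert_{\R^j}^2$, which is exactly $f(x) + 2\langle x - \P_\S(x),h\rangle_{\R^j} + \Vert h \Vert_{\R^j}^2$. Conversely, since $y_h \in \S$ and $y$ realizes the distance from $x$ to $\S$, I get $f(x) \leq \Vert x - y_h \Vert_{\R^j}^2 = \Vert (x+h-y_h) - h \Vert_{\R^j}^2 = f(x+h) - 2\langle x+h-y_h,h\rangle_{\R^j} + \Vert h \Vert_{\R^j}^2$, which rearranges as $f(x+h) \geq f(x) + 2\langle x-y,h\rangle_{\R^j} + 2\langle y-y_h,h\rangle_{\R^j} + \Vert h \Vert_{\R^j}^2$.

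The only nontrivial piece is the cross term $\langle y - y_h, h\rangle_{\R^j}$. The $1$-Lipschitz continuity of $\P_\S$ recalled just before the lemma gives $\Vert y - y_h \Vert_{\R^j} \leq \Vert h \Vert_{\R^j}$, hence $\vert \langle y - y_h, h \rangle_{\R^j} \vert \leq \Vert h \Vert_{\R^j}^2$ by Cauchy--Schwarz. Combining the two inequalities above yields
\[
- \Vert h \Vert_{\R^j}^2 \;\leq\; f(x+h) - f(x) - 2\langle x - \P_\S(x), h\rangle_{\R^j} \;\leq\; \Vert h \Vert_{\R^j}^2,
\]
which is $o(\Vert h \Vert_{\R^j})$. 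This is precisely Fréchet differentiability of $f$ at $x$ with differential $h \mapsto 2\langle x - \P_\S(x), h\rangle_{\R^j}$, which is the claim. There is no real obstacle here; the whole argument is a two-line convex-analysis standard, with the $1$-Lipschitz bound on $\P_\S$ playing the role that would otherwise have required smoothness of the projection.
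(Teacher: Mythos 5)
Your argument is correct and complete. The paper itself offers no proof of this lemma (it is recalled as one of two ``obvious lemmas'' just before Ekeland's principle is applied), so there is no official argument to compare against; your two-sided quadratic squeeze --- bounding $f(x+h)=d_\S(x+h)^2$ from above by testing the infimum defining $d_\S(x+h)$ at $y=\P_\S(x)$, and from below by testing the infimum defining $d_\S(x)$ at $y_h=\P_\S(x+h)$, then absorbing the cross term $\langle \P_\S(x)-\P_\S(x+h),h\rangle_{\R^j}$ via the $1$-Lipschitz continuity of $\P_\S$ recalled in the paper --- is exactly the standard convex-analysis proof and yields the error bound $O(\Vert h\Vert_{\R^j}^2)$, which is more than the $o(\Vert h\Vert_{\R^j})$ needed. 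One minor remark: the Lipschitz bound on $\P_\S$ is not strictly indispensable; the lower estimate can alternatively be closed using the variational characterization $\langle x-\P_\S(x),x'-\P_\S(x)\rangle_{\R^j}\leq 0$ for $x'\in\S$, also recalled in the paper, but your route is just as clean. Note also that the lemma's statement carries a typographical slip ($d_\S^2(x)^2$ for $d_\S(x)^2$), which you have interpreted in the only sensible way, consistent with how the lemma is used in \eqref{defp0R}--\eqref{defpsiepsR}.
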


We are now in a position to apply Ekeland's Variational Principle.
For every $\varepsilon > 0$ such that $\sqrt{\varepsilon} < \min ( \nu_R, \eta_R )$, we consider the   functional $J^R_\varepsilon:(\E^R_\Omega,d_{\UU})\rightarrow \R^+$ defined by
$$
J^R_\varepsilon(u,\bar{q}_a) = \left( \max(q^0(b,u,\bar{q}_a) - q^{0*}(b) + \varepsilon,0)^2 + d^2_{\S} (\bar{g}(\bar{q}_a,\bar{q}(b,u,\bar{q}_a))) \right)^{1/2}.
$$
Since $F_{(u^*,\bar{q}^*_a,R)}$ (by Lemma~\ref{prop30-1-1}), $\bar{g}$ and $d_{\S}$ are continuous, it follows that $J^R_\varepsilon$ is continuous on $(\E^R_\Omega,d_{\UU})$. Moreover, one has $J^R_\varepsilon (u^*,\bar{q}^*_a) = \varepsilon$ and $J^R_\varepsilon (u,\bar{q}_a) > 0$ for every $(u,\bar{q}_a) \in \E^R_\Omega$.
It follows from Ekeland's Variational Principle that, for every $\varepsilon > 0$ such that $\sqrt{\varepsilon} < \min ( \nu_R, \eta_R )$, there exists $(u^R_\varepsilon,\bar{q}^R_{a,\varepsilon}) \in \E^R_\Omega$ such that $d_{\UU} ( (u^R_\varepsilon,\bar{q}^R_{a,\varepsilon}),(u^*,\bar{q}^*_a) ) \leq \sqrt{\varepsilon}$ and
\begin{equation}\label{eqconsequenceekeland}
-\sqrt{\varepsilon} d_{\UU} ((u,\bar{q}_a) , (u^R_\varepsilon,\bar{q}^R_{a,\varepsilon})) \leq  J^R_\varepsilon (u,\bar{q}_a) - J^R_\varepsilon (u^R_\varepsilon,\bar{q}^R_{a,\varepsilon}),
\end{equation}
for every $(u,\bar{q}_a) \in \E^R_\Omega$.
In particular, $u^R_\varepsilon$ converges to $u^*$ in $\L^1_\T([a,b[_\T,\R^m)$ and $\bar{q}^R_{a,\varepsilon}$ converges to $\bar{q}^*_a$ as $\varepsilon$ tends to $0$. Besides, setting
\begin{equation}\label{defp0R}
\psi^{0R}_\varepsilon = \frac{-1}{J^R_\varepsilon (u^R_\varepsilon,\bar{q}^R_{a,\varepsilon})} \max(q^0 (b,u^R_\varepsilon,\bar{q}^R_{a,\varepsilon}) - q^{0*}(b) + \varepsilon,0) \leq 0
\end{equation}
and
\begin{equation}\label{defpsiepsR}
\psi^R_\varepsilon = \frac{-1}{J^R_\varepsilon (u^R_\varepsilon,\bar{q}^R_{a,\varepsilon})} \left( \bar{g} (\bar{q}^R_{a,\varepsilon},\bar{q}(b,u^R_\varepsilon,\bar{q}^R_{a,\varepsilon})) - \P_{\S} (\bar{g} (\bar{q}^R_{a,\varepsilon},\bar{q}(b,u^R_\varepsilon,\bar{q}^R_{a,\varepsilon}))) \right) \in \R^{j} ,
\end{equation}
note that $\vert \psi^{0R}_\varepsilon \vert^2 + \Vert \psi^R_\varepsilon \Vert_{\R^j}^2 = 1 $ and $-\psi^R_\varepsilon \in \O_\S (\P_{\S} (\bar{g} (\bar{q}^R_{a,\varepsilon},\bar{q}(b,u^R_\varepsilon,\bar{q}^R_{a,\varepsilon}))))$. 

Using a compactness argument, the continuity of $F_{(u^*,\bar{q}^*_a,R)}$ and the $\CC^1$ regularity of $\bar g$, and the (partial) converse of the Dominated Convergence Theorem, we infer that there exists a sequence $(\varepsilon_k)_{k \in \N}$ of positive real numbers converging to $0$ such that $u^R_{\varepsilon_k}$ converges to $u^*$ $\DD$-a.e.\ on $[a,b[_\T$,
$\bar{q}^R_{a,\varepsilon_k}$ converges to $\bar{q}^*_a$,
$\bar{g}(\bar{q}^R_{a,\varepsilon_k},\bar{q}(b,u^R_{\varepsilon_k},\bar{q}^R_{a,\varepsilon_k}))$ converges to $\bar{g}(\bar{q}^*_a,\bar{q}^*(b)) \in \S$,
$\mathrm{d}\bar{g}(\bar{q}^R_{a,\varepsilon_k},\bar{q}(b,u^R_{\varepsilon_k},\bar{q}^R_{a,\varepsilon_k}))$ converges to $\mathrm{d}\bar{g}(\bar{q}^*_a,\bar{q}^*(b))$,
$\psi^{0R}_{\varepsilon_k}$ converges to some $\psi^{0R} \leq 0$,
and $\psi^R_{\varepsilon_k}$ converges to some $\psi^R \in \R^j$ as $k$ tends to $+\infty$,
with $\vert \psi^{0R} \vert^2 + \Vert \psi^R \Vert_{\R^j}^2 = 1 $ and $-\psi^R \in \O_{\S} (\bar{g} (\bar{q}^*_a,\bar{q}^*(b)))$ (see Lemma~\ref{lemconvex}).

\medskip

In the three next lemmas, we use the inequality \eqref{eqconsequenceekeland} respectively with needle-like variations of $u^R_{\varepsilon_k}$ at right-scattered points and then at right-dense points, and variations of $\bar{q}^R_{a,\varepsilon_k}$, and infer some crucial inequalities by taking the limit in $k$.
Note that these variations were defined in Section \ref{section3} for any dynamics $f$, and that we apply them here to the augmented system \eqref{DD-CS_augm}, associated with the augmented dynamics $\bar f$.

\begin{lemma}\label{lemfondamscattered}
For every $r \in [a,b[_\T \cap \RR$ and every $y \in \DS(u^*(r))$, considering the needle-like variation $\Pi = (r,y)$ at the right-scattered point $r$ as defined in Section \ref{section32}, there holds
\begin{equation}\label{inegfondamscattered}
\psi^{0R}  w^{0}_\Pi (b,u^*,\bar{q}^*_a) + \Big\langle \Big( \frac{\partial \bar{g}}{\partial q_2} (\bar{q}^*_a,\bar{q}^*(b)) \Big)^{\!\mathrm{T}}  \psi^R , w_\Pi (b,u^*,\bar{q}^*_{a}) \Big\rangle_{\R^n} \leq 0 ,
\end{equation}
where the variation vector $\bar w_\Pi=(w_\Pi,w^0_\Pi)$ is defined by \eqref{varvect_scattered} (replacing $f$ with $\bar f$).
\end{lemma}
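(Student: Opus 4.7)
The plan is to apply Ekeland's inequality \eqref{eqconsequenceekeland} at the point $(u^R_{\varepsilon_k},\bar{q}^R_{a,\varepsilon_k})$, tested against a needle-like variation of $u^R_{\varepsilon_k}$ at $r$ in the direction $y$, and then to let first $\alpha \to 0^+$ and second $k \to +\infty$. The two main technical inputs are the first-order expansion of the state provided by Proposition \ref{prop32-1} (applied to the augmented dynamics $\bar f$) and the stability of the variation vector with respect to the reference control given by Lemma \ref{lem32-2}.

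First I would verify that the needle-like variation $\Pi_k = (r,y)$ of $u^R_{\varepsilon_k}$ is well-defined for all sufficiently large $k$. Since $r \in \RR$ satisfies $\mu_\DD(\{r\}) = \mu(r) > 0$, the $\DD$-a.e.\ convergence $u^R_{\varepsilon_k} \to u^*$ forces $u^R_{\varepsilon_k}(r) \to u^*(r)$ in $\R^m$. The hypothesis $y \in \DS(u^*(r))$ then gives $y \in \D(u^R_{\varepsilon_k}(r))$ for $k$ large, which is precisely what is needed for Section \ref{section32} to apply. Choosing $R$ from the start larger than $\Vert u^*\Vert_\infty + \Vert y\Vert$, the perturbed pair $(u^R_{\varepsilon_k,\Pi_k}(\cdot,\alpha),\bar{q}^R_{a,\varepsilon_k})$ lies in $\E^R_\Omega$ for $\alpha \in \D(u^R_{\varepsilon_k}(r),y)\cap[0,\alpha_{0,k}]$ by Lemma \ref{lem32-1}.

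Next I plug this perturbation into \eqref{eqconsequenceekeland}. The left-hand side equals $-\sqrt{\varepsilon_k}\,\alpha\,\mu(r)\,\Vert y - u^R_{\varepsilon_k}(r)\Vert_{\R^m}$. For the right-hand side I observe that $(J^R_{\varepsilon_k})^2$ is differentiable (the map $t \mapsto \max(t,0)^2$ is of class $\CC^1$, and $d_\S^2$ is differentiable as recalled in the paper), and $J^R_{\varepsilon_k}(u^R_{\varepsilon_k},\bar{q}^R_{a,\varepsilon_k}) > 0$, so $J^R_{\varepsilon_k}$ itself is differentiable at this point. Combining the chain rule with Proposition \ref{prop32-1}, dividing by $\alpha$ and sending $\alpha \to 0^+$, and using the definitions \eqref{defp0R}–\eqref{defpsiepsR} of $\psi^{0R}_{\varepsilon_k}$ and $\psi^R_{\varepsilon_k}$, I obtain
\begin{equation*}
\psi^{0R}_{\varepsilon_k}\, w^0_\Pi(b,u^R_{\varepsilon_k},\bar{q}^R_{a,\varepsilon_k}) + \Big\langle \Big(\tfrac{\partial \bar{g}}{\partial q_2}(\bar{q}^R_{a,\varepsilon_k},\bar{q}(b,u^R_{\varepsilon_k},\bar{q}^R_{a,\varepsilon_k}))\Big)^{\!\mathrm{T}}\psi^R_{\varepsilon_k}, w_\Pi(b,u^R_{\varepsilon_k},\bar{q}^R_{a,\varepsilon_k})\Big\rangle_{\R^n}\leq \sqrt{\varepsilon_k}\,\mu(r)\,\Vert y - u^R_{\varepsilon_k}(r)\Vert_{\R^m}.
\end{equation*}

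Finally I pass to the limit $k \to +\infty$. The right-hand side tends to $0$. On the left-hand side: Lemma \ref{lem32-2} (applied to $\bar f$) yields $\bar{w}_\Pi(b,u^R_{\varepsilon_k},\bar{q}^R_{a,\varepsilon_k}) \to \bar{w}_\Pi(b,u^*,\bar{q}^*_a)$; the multipliers $\psi^{0R}_{\varepsilon_k}$ and $\psi^R_{\varepsilon_k}$ converge to $\psi^{0R}$ and $\psi^R$ by the extraction made in Section \ref{section41}; and the continuity of $\partial\bar g/\partial q_2$ together with $\bar q^R_{a,\varepsilon_k}\to \bar q^*_a$ and $\bar q(b,u^R_{\varepsilon_k},\bar q^R_{a,\varepsilon_k})\to \bar q^*(b)$ (Lemma \ref{prop30-1-1}) handles the Jacobian factor. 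This produces exactly \eqref{inegfondamscattered}. The delicate point in the whole argument is the first step: it is precisely the stability in the concept $\DS(u^*(r))$ (and not merely $\D(u^*(r))$) that makes the variation legal for the sequence of reference controls $u^R_{\varepsilon_k}$, without which the limit procedure could not be carried out on a general time scale without further assumptions on $\Omega$.
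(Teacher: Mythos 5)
Your proposal is correct and follows essentially the same route as the paper's proof: legitimize the variation of $u^R_{\varepsilon_k}$ at $r$ for large $k$ via the stability built into $\DS(u^*(r))$ and the convergence $u^R_{\varepsilon_k}(r)\to u^*(r)$ (which uses $\mu_\DD(\{r\})=\mu(r)>0$), feed it into \eqref{eqconsequenceekeland}, differentiate $(J^R_{\varepsilon_k})^2$ at $\alpha=0$ via Proposition \ref{prop32-1}, and pass to the limit in $k$ with Lemma \ref{lem32-2}. The only quibble is that you should not enlarge $R$ to exceed $\Vert u^*\Vert_{\L^\infty}+\Vert y\Vert$ (the lemma must hold for the fixed $R$ since $\psi^{0R},\psi^R$ depend on it); instead, as in the paper, use that $\Vert u^R_{\varepsilon_k}(r)\Vert_{\R^m}<R$ for $k$ large, so the perturbed control keeps $\L^\infty$-norm at most $R$ for $\alpha$ sufficiently small.
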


\begin{proof}
Since $u^R_{\varepsilon_k}$ converges to $u^*$ $\DD$-a.e. on $[a,b[_\T$, it follows that $u^R_{\varepsilon_k} (r)$ converges to $u^*(r)$ as $k$ tends to $+\infty$. Hence $y \in \D (u^R_{\varepsilon_k}(r))$ and $\Vert u^R_{\varepsilon_k} (r) \Vert_{\R^m} < R$ for $k$ sufficiently large. Fixing such a large integer $k$, we recall that $u^R_{\varepsilon_k,\Pi} (\cdot,\alpha) \in \L^\infty_\T ([a,b[_\T;\Omega)$ for every $\alpha \in \D (u^R_{\varepsilon_k}(r),y)$, and
\begin{equation*}\begin{split}
\Vert u^R_{\varepsilon_k,\Pi} (\cdot,\alpha) \Vert_{\L^\infty_\T ([a,b[_\T,\R^m)}
& \leq \max (\Vert u^R_{\varepsilon_k} \Vert_{\L^\infty_\T ([a,b[_\T,\R^m)}, \Vert u^R_{\varepsilon_k,\Pi} (r,\alpha) \Vert_{\R^m} ) \\ 
& \leq \max (R, \Vert u^R_{\varepsilon_k} (r) \Vert_{\R^m} + \alpha \Vert y - u^R_{\varepsilon_k} (r) \Vert_{\R^m}  ) ,
\end{split}\end{equation*}
and
\begin{equation*}\begin{split}
\Vert u^R_{\varepsilon_k,\Pi} (\cdot,\alpha) - u^* \Vert_{\L^1_\T ([a,b[_\T,\R^m)} 
& \leq \Vert u^R_{\varepsilon_k,\Pi} (\cdot,\alpha) - u^R_{\varepsilon_k} \Vert_{\L^1_\T ([a,b[_\T,\R^m)} + \Vert u^R_{\varepsilon_k}-u^* \Vert_{\L^1_\T ([a,b[_\T,\R^m)} \\ 
& \leq \alpha \mu (r) \Vert y-u^R_{\varepsilon_k} (r) \Vert_{\R^m} + \sqrt{\varepsilon_k}.
\end{split}\end{equation*}
Therefore $(u^R_{\varepsilon_k,\Pi} (\cdot,\alpha),\bar{q}^R_{a,\varepsilon_k}) \in \E^R_\Omega$ for every $\alpha \in \D (u^R_{\varepsilon_k}(r),y)$ sufficiently small.
It then follows from \eqref{eqconsequenceekeland} that
\begin{equation*}
-\sqrt{\varepsilon_k} \Vert u^R_{\varepsilon_k,\Pi} (\cdot,\alpha) - u^R_{\varepsilon_k} \Vert_{\L^1_\T ([a,b[_\T,\R^m)} \leq  J^R_{k} (u^R_{\varepsilon_k,\Pi} (\cdot,\alpha),\bar{q}^R_{a,\varepsilon_k}) - J^R_{k} (u^R_{\varepsilon_k},\bar{q}^R_{a,\varepsilon_k}),
\end{equation*}
and thus
\begin{equation*}
-\sqrt{\varepsilon_k} \mu (r) \Vert y-u^R_{\varepsilon_k} (r) \Vert_{\R^m} 
\leq \frac{J^R_{k} (u^R_{\varepsilon_k,\Pi} (\cdot,\alpha),\bar{q}^R_{a,\varepsilon_k})^2 - J^R_{k} (u^R_{\varepsilon_k},\bar{q}^R_{a,\varepsilon_k})^2}{\alpha ( J^R_{k} (u^R_{\varepsilon_k,\Pi} (\cdot,\alpha),\bar{q}^R_{a,\varepsilon_k}) + J^R_{k} (u^R_{\varepsilon_k},\bar{q}^R_{a,\varepsilon_k}) ) } .
\end{equation*}
Using 
Proposition~\ref{prop32-1}, since $\bar{g}$ does not depend on $q_2^0$, we infer that
\begin{equation*}\begin{split}
& \lim_{\alpha \to 0} \frac{J^R_{k} (u^R_{\varepsilon_k,\Pi} (\cdot,\alpha),\bar{q}^R_{a,\varepsilon_k})^2 - J^R_{k} (u^R_{\varepsilon_k},\bar{q}^R_{a,\varepsilon_k})^2}{\alpha} \\
&= 2 \max(q^0 (b,u^R_{\varepsilon_k},\bar{q}^R_{a,\varepsilon_k}) - {q^0}^*(b) + \varepsilon_k,0)  w^{0}_\Pi (b,u^R_{\varepsilon_k},\bar{q}^R_{a,\varepsilon_k}) \\ 
& \quad
+ 2 \Big\langle \bar{g}(\bar{q}^R_{a,\varepsilon_k},\bar{q}(b,u^R_{\varepsilon_k},\bar{q}^R_{a,\varepsilon_k})) - \P_{\S} (\bar{g}(\bar{q}^R_{a,\varepsilon_k},\bar{q}(b,u^R_{\varepsilon_k},\bar{q}^R_{a,\varepsilon_k}))), \\
& \qquad\qquad\qquad\qquad\qquad      \frac{\partial \bar{g}}{\partial q_2} (\bar{q}^R_{a,\varepsilon_k},\bar{q}(b,u^R_{\varepsilon_k},\bar{q}^R_{a,\varepsilon_k}))w_\Pi (b,u^R_{\varepsilon_k},\bar{q}^R_{a,\varepsilon_k}) \Big\rangle_{\R^j}.
\end{split}\end{equation*}
Since $J^R_{k} (u^R_{\varepsilon_k,\Pi} (\cdot,\alpha),\bar{q}^R_{a,\varepsilon_k})$ converges to $J^R_{k} (u^R_{\varepsilon_k},\bar{q}^R_{a,\varepsilon_k})$ as $\alpha$ tends to $0$, using \eqref{defp0R} and \eqref{defpsiepsR} it follows that
\begin{multline*}
-\sqrt{\varepsilon_k} \mu (r) \Vert y-u^R_{\varepsilon_k} (r) \Vert_{\R^m} \leq -\psi^{0R}_{\varepsilon_k}  w^{0}_\Pi (b,u^R_{\varepsilon_k},\bar{q}^R_{a,\varepsilon_k}) \\
- \Big\langle \Big( \frac{\partial \bar{g}}{\partial q_2} (\bar{q}^R_{a,\varepsilon_k},\bar{q}(b,u^R_{\varepsilon_k},\bar{q}^R_{a,\varepsilon_k})) \Big)^{\!\mathrm{T}} \psi^R_{\varepsilon_k} , w_\Pi (b,u^R_{\varepsilon_k},\bar{q}^R_{a,\varepsilon_k}) \Big\rangle_{\R^n}.
\end{multline*}
By letting $k$ tend to $+\infty$, and using Lemma~\ref{lem32-2}, the lemma follows.
\end{proof}

Denote by $\LL^R_{[a,b[_\T}$ the set of (Lebesgue) times $t \in [a,b[_\T$ such that $t \in \LL_{[a,b[_\T} (f(q(\cdot,u^*,q^*_a),u^*,t))$, such that $t\in \LL_{[a,b[_\T} (f(q(\cdot,u^R_{\varepsilon_k},q^R_{a,k}),u^R_{\varepsilon_k},t))$ for every $k\in\N$, and such that $u^R_{\varepsilon_k} (t)$ converges to $u^*(t)$ as $k$ tends to $+\infty$.
There holds $\mu_\DD (\LL^R_{[a,b[_\T}) = \mu_\DD ([a,b[_\T) = b-a$. 

\begin{lemma}\label{lemfondamdense}
For every $s \in \LL^R_{[a,b[_\T} \cap \SS$ and any $z \in \Omega \cap \B_{\R^m}(0,R)$, 
considering the needle-like variation $\amalg = (s,z)$ as defined in Section \ref{section33}, there holds
\begin{equation}\label{inegfondamdense}
\psi^{0R}  w^{0}_\amalg (b,u^*,\bar{q}^*_a) + \Big\langle \Big( \frac{\partial \bar{g}}{\partial q_2} (\bar{q}^*_a,\bar{q}^*(b)) \Big)^{\!\mathrm{T}}  \psi^R , w_\amalg (b,u^*,\bar{q}^*_a) \Big\rangle_{\R^n} \leq 0 ,
\end{equation}
where the variation vector $\bar w_\amalg=(w_\amalg,w^0_\amalg)$ is defined by \eqref{varvect_dense} (replacing $f$ with $\bar f$).
\end{lemma}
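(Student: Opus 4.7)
The plan is to mimic the proof of Lemma \ref{lemfondamscattered} step by step, replacing the right-scattered needle-like variation $\Pi=(r,y)$ by the right-dense variation $\amalg = (s,z)$, and using the corresponding tools from Section \ref{section33} (Lemma \ref{lem33-1}, Lemma \ref{lem33-1-1}, Proposition \ref{prop33-1}, Lemma \ref{lem33-2}) in place of those from Section \ref{section32}.

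First I would fix $k \in \N$ large enough and verify that the varied pair $(u^R_{\varepsilon_k,\amalg}(\cdot,\beta),\bar q^R_{a,\varepsilon_k})$ belongs to $\E^R_\Omega$ for every $\beta \in \V^b_s$ sufficiently small. For the $\L^\infty$-bound, since $\|z\|_{\R^m}\leq R$ and $\|u^R_{\varepsilon_k}\|_{\L^\infty_\T}\leq R$, one has $\|u^R_{\varepsilon_k,\amalg}(\cdot,\beta)\|_{\L^\infty_\T([a,b[_\T,\R^m)}\leq R$; for the $\L^1$-bound,
\[
\|u^R_{\varepsilon_k,\amalg}(\cdot,\beta)-u^*\|_{\L^1_\T} \leq \int_{[s,s+\beta[_\T}\|z-u^R_{\varepsilon_k}(\tau)\|_{\R^m}\,\DD\tau + \sqrt{\varepsilon_k} \leq 2R\beta+\sqrt{\varepsilon_k},
\]
which is $\leq \nu_R$ for $\beta$ small (and $\varepsilon_k$ was already chosen with $\sqrt{\varepsilon_k}<\nu_R$). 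Closedness of $\Omega$ and the definition of $\amalg$ ensure $u^R_{\varepsilon_k,\amalg}(\cdot,\beta)$ takes values in $\Omega$, so the varied pair is admissible for Ekeland.

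Next, applying \eqref{eqconsequenceekeland} to this variation gives
\[
-\sqrt{\varepsilon_k}\int_{[s,s+\beta[_\T}\|z-u^R_{\varepsilon_k}(\tau)\|_{\R^m}\,\DD\tau \leq J^R_k(u^R_{\varepsilon_k,\amalg}(\cdot,\beta),\bar q^R_{a,\varepsilon_k})-J^R_k(u^R_{\varepsilon_k},\bar q^R_{a,\varepsilon_k}).
\]
I would divide by $\beta>0$, multiply and divide the right-hand side by $J^R_k(u^R_{\varepsilon_k,\amalg}(\cdot,\beta),\bar q^R_{a,\varepsilon_k})+J^R_k(u^R_{\varepsilon_k},\bar q^R_{a,\varepsilon_k})$ to handle the square root, and pass to the limit $\beta\to 0^+$ with $\beta\in\V^b_s$. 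On the left, since $s\in\LL^R_{[a,b[_\T}\cap \SS$, the property \eqref{eq9999} applied to $\tau\mapsto\|z-u^R_{\varepsilon_k}(\tau)\|_{\R^m}$ yields the limit $\|z-u^R_{\varepsilon_k}(s)\|_{\R^m}$. On the right, Proposition \ref{prop33-1} and the chain rule (using that $\bar g$ is $\CC^1$ and independent of $q_2^0$, and that $d_\S^2$ is differentiable with derivative $2\langle\cdot-\P_\S(\cdot),\cdot\rangle_{\R^j}$) produce the directional derivative along $\bar w_\amalg(b,u^R_{\varepsilon_k},\bar q^R_{a,\varepsilon_k})$. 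Using the definitions \eqref{defp0R}-\eqref{defpsiepsR}, this gives
\[
-\sqrt{\varepsilon_k}\|z-u^R_{\varepsilon_k}(s)\|_{\R^m} \leq -\psi^{0R}_{\varepsilon_k}w^0_\amalg(b,u^R_{\varepsilon_k},\bar q^R_{a,\varepsilon_k}) - \Big\langle \Big(\tfrac{\partial \bar g}{\partial q_2}(\bar q^R_{a,\varepsilon_k},\bar q(b,u^R_{\varepsilon_k},\bar q^R_{a,\varepsilon_k}))\Big)^{\!\mathrm T}\psi^R_{\varepsilon_k},w_\amalg(b,u^R_{\varepsilon_k},\bar q^R_{a,\varepsilon_k})\Big\rangle_{\R^n}.
\]

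Finally, I would take $k\to+\infty$. The left-hand side tends to $0$ because $\sqrt{\varepsilon_k}\to 0$ and $\|z-u^R_{\varepsilon_k}(s)\|_{\R^m}$ stays bounded (in fact converges to $\|z-u^*(s)\|_{\R^m}$ since $s\in\LL^R_{[a,b[_\T}$). On the right, Lemma \ref{lem33-2} gives uniform convergence of $w_\amalg(\cdot,u^R_{\varepsilon_k},\bar q^R_{a,\varepsilon_k})$ to $w_\amalg(\cdot,u^*,\bar q^*_a)$, the extracted subsequence ensures convergence of $\psi^{0R}_{\varepsilon_k}\to\psi^{0R}$ and $\psi^R_{\varepsilon_k}\to\psi^R$, and $\CC^1$-regularity of $\bar g$ handles the differential term, yielding exactly \eqref{inegfondamdense}. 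The main delicate point, as opposed to the right-scattered case, is handling the limit $\beta\to 0^+$ inside $\V^b_s$: the formula \eqref{eq9999} is valid only for such $\beta$, and this is precisely why the statement requires $s\in\SS$ and $s$ to be a $\DD$-Lebesgue point of the relevant integrands (which is encoded in the definition of $\LL^R_{[a,b[_\T}$). Everything else is a routine transcription of the right-scattered proof with $\alpha\leadsto\beta$, $\mu(r)\|y-u^R_{\varepsilon_k}(r)\|_{\R^m}\leadsto\int_{[s,s+\beta[_\T}\|z-u^R_{\varepsilon_k}(\tau)\|_{\R^m}\,\DD\tau$, and Proposition/Lemma references updated accordingly.
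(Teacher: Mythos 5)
Your proposal is correct and follows essentially the same route as the paper's proof: admissibility of the varied pair in $\E^R_\Omega$, the Ekeland inequality \eqref{eqconsequenceekeland} divided by $\beta$, Proposition \ref{prop33-1} together with the differentiability of $d_\S^2$ for the limit $\beta\to 0^+$, and Lemma \ref{lem33-2} for the limit $k\to+\infty$. The only (harmless) deviation is on the left-hand side: the definition of $\LL^R_{[a,b[_\T}$ does not make $s$ a $\DD$-Lebesgue point of $\tau\mapsto\Vert z-u^R_{\varepsilon_k}(\tau)\Vert_{\R^m}$, so \eqref{eq9999} cannot be invoked for that function as you claim; the paper instead bounds $\Vert u^R_{\varepsilon_k,\amalg}(\cdot,\beta)-u^R_{\varepsilon_k}\Vert_{\L^1_\T([a,b[_\T,\R^m)}$ by $2R\beta$ before dividing, obtaining the $\beta$-independent lower bound $-2R\sqrt{\varepsilon_k}$, which suffices since one only needs the left-hand side to vanish as $k\to+\infty$.
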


\begin{proof}
For every $k \in \N$ and any $\beta \in \V^b_s$, we recall that $u^R_{\varepsilon_k,\amalg} (\cdot,\beta) \in \L^\infty_\T ([a,b[_\T,\Omega) $ and
\begin{equation*}
\Vert u^R_{\varepsilon_k,\amalg} (\cdot,\beta) \Vert_{\L^\infty_\T ([a,b[_\T,\R^m)} \leq \max (\Vert u^R_{\varepsilon_k} \Vert_{\L^\infty_\T ([a,b[_\T,\R^m)} , \Vert z \Vert_{\R^m} ) \leq R  ,
\end{equation*}
and
\begin{equation*}\begin{split}
\Vert u^R_{\varepsilon_k,\amalg} (\cdot,\beta) - u^* \Vert_{\L^1_\T ([a,b[_\T,\R^m)} 
& \leq \Vert u^R_{\varepsilon_k,\amalg} (\cdot,\beta) - u^R_{\varepsilon_k} \Vert_{\L^1_\T ([a,b[_\T,\R^m)} + \Vert u^R_{\varepsilon_k}-u^* \Vert_{\L^1_\T ([a,b[_\T,\R^m)} \\ 
& \leq 2R \beta + \sqrt{\varepsilon_k}.
\end{split}\end{equation*}
Therefore $(u^R_{\varepsilon_k,\amalg} (\cdot,\beta),\bar{q}^R_{a,\varepsilon_k}) \in \E^R_\Omega$ for $\beta \in \V^b_s$ sufficiently small.
It then follows from \eqref{eqconsequenceekeland} that
\begin{equation*}
-\sqrt{\varepsilon_k} \Vert u^R_{\varepsilon_k,\amalg} (\cdot,\beta) - u^R_{\varepsilon_k} \Vert_{\L^1_\T ([a,b[_\T,\R^m)} \leq  J^R_{k} (u^R_{\varepsilon_k,\amalg} (\cdot,\beta),\bar{q}^R_{a,\varepsilon_k}) - J^R_{k} (u^R_{\varepsilon_k},\bar{q}^R_{a,\varepsilon_k}) ,
\end{equation*}
and thus
\begin{equation*}
-2 R \sqrt{\varepsilon_k} 
\leq \frac{J^R_{k} (u^R_{\varepsilon_k,\amalg} (\cdot,\beta),\bar{q}^R_{a,\varepsilon_k})^2 - J^R_{k} (u^R_{\varepsilon_k},\bar{q}^R_{a,\varepsilon_k})^2}{\beta ( J^R_{k} (u^R_{\varepsilon_k,\amalg} (\cdot,\beta),\bar{q}^R_{a,\varepsilon_k}) + J^R_{k} (u^R_{\varepsilon_k},\bar{q}^R_{a,\varepsilon_k}) ) } .
\end{equation*}
Using Proposition~\eqref{prop33-1}, since $\bar{g}$ does not depend on $q_2^0$, we infer that
\begin{equation*}\begin{split}
& \lim_{\beta \to 0} \frac{J^R_{k} (u^R_{\varepsilon_k,\amalg} (\cdot,\beta),\bar{q}^R_{a,\varepsilon_k})^2 - J^R_{k} (u^R_{\varepsilon_k},\bar{q}^R_{a,\varepsilon_k})^2}{\beta} \\ 
&= 2 \max(q^0 (b,u^R_{\varepsilon_k},\bar{q}^R_{a,\varepsilon_k}) - {q^0}^*(b) + \varepsilon_k,0) w^{0}_\amalg (b,u^R_{\varepsilon_k},\bar{q}^R_{a,\varepsilon_k}) \\ 
& \quad + 2 \Big\langle \bar{g}(\bar{q}^R_{a,\varepsilon_k},\bar{q}(b,u^R_{\varepsilon_k},\bar{q}^R_{a,\varepsilon_k})) - \P_{\S} (\bar{g}(\bar{q}^R_{a,\varepsilon_k},\bar{q}(b,u^R_{\varepsilon_k},\bar{q}^R_{a,\varepsilon_k}))), \\
& \qquad\qquad\qquad\qquad\qquad    \frac{\partial \bar{g}}{\partial q_2}(\bar{q}^R_{a,\varepsilon_k},\bar{q}(b,u^R_{\varepsilon_k},\bar{q}^R_{a,\varepsilon_k}))w_\amalg (b,u^R_{\varepsilon_k},\bar{q}^R_{a,\varepsilon_k}) \Big\rangle_{\R^j}.
\end{split}\end{equation*}
Since $J^R_{k} (u^R_{\varepsilon_k,\amalg} (\cdot,\beta),\bar{q}^R_{a,\varepsilon_k})$ converges to $J^R_{k} (u^R_{\varepsilon_k},\bar{q}^R_{a,\varepsilon_k})$ as $\alpha$ tends to $0$, using \eqref{defp0R} and \eqref{defpsiepsR} it follows that
\begin{equation*}
-2R \sqrt{\varepsilon_k} \leq -\psi^{0R}_{\varepsilon_k}  w^{0}_\amalg (b,u^R_{\varepsilon_k},\bar{q}^R_{a,\varepsilon_k}) 
- \Big\langle \Big( \frac{\partial \bar{g}}{\partial q_2} (\bar{q}^R_{a,\varepsilon_k},\bar{q}(b,u^R_{\varepsilon_k},\bar{q}^R_{a,\varepsilon_k})) \Big)^{\!\mathrm{T}}  \psi^R_{\varepsilon_k} , w_\amalg (b,u^R_{\varepsilon_k},\bar{q}^R_{a,\varepsilon_k}) \Big\rangle_{\R^n}.
\end{equation*}
By letting $k$ tend to $+\infty$, and using Lemma~\ref{lem33-2}, the lemma follows.
\end{proof}

\begin{lemma}\label{lemfondampoint}
For every $\bar{q}_a \in \R^n \times \{ 0 \}$, considering the variation of initial point as defined in Section \ref{section34}, there holds
\begin{equation}\label{inegfondampoint}
\psi^{0R}  w^0_{\bar{q}_a} (b,u^*,\bar{q}^*_a)+ \Big\langle \Big( \frac{\partial \bar{g}}{\partial q_2} (\bar{q}^*_a,\bar{q}^*(b)) \Big)^{\!\mathrm{T}}  \psi^R , w_{\bar{q}_a} (b,u^*,\bar{q}^*_a) \Big\rangle_{\R^n} 
\leq - \Big\langle \Big( \frac{\partial \bar{g}}{\partial q_1} (\bar{q}^*_a,\bar{q}^*(b)) \Big)^{\!\mathrm{T}}  \psi^R , q_a \Big\rangle_{\R^n}  ,
\end{equation}
where the variation vector $\bar w_{\bar{q}_a}=(w_{\bar{q}_a},w^0_{\bar{q}_a})$ is defined by \eqref{varvect_pointinit} (replacing $f$ with $\bar f$).
\end{lemma}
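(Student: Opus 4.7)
The strategy mirrors that of Lemmas~\ref{lemfondamscattered} and \ref{lemfondamdense}, replacing the needle-like variation with the variation of the initial condition introduced in Section~\ref{section34}. Fix $\bar{q}_a=(q_a,0)\in\R^n\times\{0\}$. For each $k$ large enough and $\gamma>0$ sufficiently small, I would first verify that $(u^R_{\varepsilon_k},\bar{q}^R_{a,\varepsilon_k}+\gamma\bar{q}_a)\in\E^R_\Omega$: the control component is unchanged, so the $\L^\infty$-bound by $R$ is preserved and the $\L^1$-distance to $u^*$ is $\leq\sqrt{\varepsilon_k}\leq\nu_R$; the last coordinate of the perturbed initial point remains $0$ since both summands have zero last coordinate; and by Lemma~\ref{lem34-1} applied to $(u^R_{\varepsilon_k},\bar{q}^R_{a,\varepsilon_k})$ one has $\Vert\bar{q}^R_{a,\varepsilon_k}+\gamma\bar{q}_a-\bar{q}^*_a\Vert_{\R^{n+1}}\leq\sqrt{\varepsilon_k}+\gamma\Vert\bar{q}_a\Vert_{\R^{n+1}}\leq\eta_R$ for such $k,\gamma$.

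Next, I would apply Ekeland's inequality \eqref{eqconsequenceekeland} to this pair, divide by $\gamma>0$, and use the identity $A-B=(A^2-B^2)/(A+B)$ (as in the two preceding lemmas) to obtain
$$
-\sqrt{\varepsilon_k}\,\Vert\bar{q}_a\Vert_{\R^{n+1}} \leq \frac{J^R_{\varepsilon_k}(u^R_{\varepsilon_k},\bar{q}^R_{a,\varepsilon_k}+\gamma\bar{q}_a)^2 - J^R_{\varepsilon_k}(u^R_{\varepsilon_k},\bar{q}^R_{a,\varepsilon_k})^2}{\gamma\bigl(J^R_{\varepsilon_k}(u^R_{\varepsilon_k},\bar{q}^R_{a,\varepsilon_k}+\gamma\bar{q}_a) + J^R_{\varepsilon_k}(u^R_{\varepsilon_k},\bar{q}^R_{a,\varepsilon_k})\bigr)}.
$$
The feature specific to this lemma appears when passing to the limit $\gamma\to 0$: Proposition~\ref{prop34-1} delivers the differentiability of the final augmented state, but the chain rule applied to $\bar g(\bar{q}^R_{a,\varepsilon_k}+\gamma\bar{q}_a,\bar{q}(b,u^R_{\varepsilon_k},\bar{q}^R_{a,\varepsilon_k}+\gamma\bar{q}_a))$ now produces \emph{two} contributions, since both arguments of $\bar g$ depend on $\gamma$. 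Using the definitions \eqref{defp0R}--\eqref{defpsiepsR}, this yields
\begin{multline*}
-\sqrt{\varepsilon_k}\,\Vert\bar{q}_a\Vert_{\R^{n+1}} \leq -\psi^{0R}_{\varepsilon_k}\,w^0_{\bar{q}_a}(b,u^R_{\varepsilon_k},\bar{q}^R_{a,\varepsilon_k}) - \Big\langle \Big(\frac{\partial\bar g}{\partial q_1}\Big)^{\!\mathrm{T}}\psi^R_{\varepsilon_k},\,\bar{q}_a\Big\rangle \\
- \Big\langle \Big(\frac{\partial\bar g}{\partial q_2}\Big)^{\!\mathrm{T}}\psi^R_{\varepsilon_k},\,w_{\bar{q}_a}(b,u^R_{\varepsilon_k},\bar{q}^R_{a,\varepsilon_k})\Big\rangle,
\end{multline*}
the partial derivatives of $\bar g$ being evaluated at $(\bar{q}^R_{a,\varepsilon_k},\bar{q}(b,u^R_{\varepsilon_k},\bar{q}^R_{a,\varepsilon_k}))$.

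Finally, I would let $k\to+\infty$, using Lemma~\ref{lem34-2} for the uniform convergence of $w_{\bar{q}_a}(\cdot,u^R_{\varepsilon_k},\bar{q}^R_{a,\varepsilon_k})$ to $w_{\bar{q}_a}(\cdot,u^*,\bar{q}^*_a)$ on $[a,b]_\T$, the $\CC^1$ regularity of $\bar g$, and the convergences $\psi^{0R}_{\varepsilon_k}\to\psi^{0R}$ and $\psi^R_{\varepsilon_k}\to\psi^R$. Since $\bar g$ is independent of the $q^0$ component of its first argument, the pairing $\langle(\partial\bar g/\partial q_1)^{\mathrm{T}}\psi^R,\bar{q}_a\rangle$ reduces to the $\R^n$ pairing with $q_a$, and rearranging produces \eqref{inegfondampoint}. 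No genuinely new obstacle arises beyond what was already handled in Lemmas~\ref{lemfondamscattered} and \ref{lemfondamdense}; the one point to track carefully is the chain rule generating the extra term $-\langle(\partial\bar g/\partial q_1)^{\mathrm{T}}\psi^R,q_a\rangle_{\R^n}$, which is precisely what constitutes the right-hand side of \eqref{inegfondampoint}.
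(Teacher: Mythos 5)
Your proposal is correct and follows essentially the same route as the paper's own proof: admissibility of the perturbed pair in $\E^R_\Omega$ for small $\gamma$, Ekeland's inequality \eqref{eqconsequenceekeland} divided by $\gamma$ with the $(A^2-B^2)/(A+B)$ trick, the limit $\gamma\to 0$ via Proposition~\ref{prop34-1} with the chain rule producing both the $\partial\bar g/\partial q_1$ and $\partial\bar g/\partial q_2$ contributions, and the limit $k\to+\infty$ via Lemma~\ref{lem34-2}. You correctly isolate the one feature specific to this lemma, namely the extra first-argument term that becomes the right-hand side of \eqref{inegfondampoint}.
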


\begin{proof}
For every $k \in \N$ and every $\gamma\geq 0$, one has
$$
\Vert \bar{q}^R_{a,\varepsilon_k}+\gamma \bar{q}_a - \bar{q}^*_a \Vert_{\R^n} \leq  \gamma \Vert \bar{q}_a \Vert_{\R^n} + \Vert \bar{q}^R_{a,\varepsilon_k} - \bar{q}^*_a \Vert_{\R^n} \leq \gamma \Vert \bar{q}_a \Vert_{\R^n} + \sqrt{\varepsilon_k}.
$$
Therefore $(u^R_{\varepsilon_k},\bar{q}^R_{a,\varepsilon_k}+\gamma \bar{q}_a) \in \E^R_\Omega$ for $\gamma\geq 0$ sufficiently small.
It then follows from \eqref{eqconsequenceekeland} that
\begin{equation*}
-\sqrt{\varepsilon_k} \Vert \bar{q}^R_{a,\varepsilon_k}+\gamma \bar{q}_a - \bar{q}^R_{a,\varepsilon_k} \Vert_{\R^n} \leq  J^R_{k} (u^R_{\varepsilon_k},\bar{q}^R_{a,\varepsilon_k}+\gamma \bar{q}_a) - J^R_{k} (u^R_{\varepsilon_k},\bar{q}^R_{a,\varepsilon_k}),
\end{equation*}
and thus
\begin{equation*}
-\sqrt{\varepsilon_k} \Vert \bar{q}_a \Vert_{\R^n}  
\leq \frac{J^R_{k} (u^R_{\varepsilon_k},\bar{q}^R_{a,\varepsilon_k}+\gamma \bar{q}_a)^2 - J^R_{k} (u^R_{\varepsilon_k},\bar{q}^R_{a,\varepsilon_k})^2}{\gamma ( J^R_{k} (u^R_{\varepsilon_k},\bar{q}^R_{a,\varepsilon_k}+\gamma \bar{q}_a) + J^R_{k} (u^R_{\varepsilon_k},\bar{q}^R_{a,\varepsilon_k}) )  } .
\end{equation*}
Using Proposition~\eqref{prop34-1}, since $\bar{g}$ does not depend on $q_1^0$ and $q_2^0$, we infer that
\begin{equation*}\begin{split}
& \lim_{\gamma \to 0} \frac{J^R_{k} (u^R_{\varepsilon_k},\bar{q}^R_{a,\varepsilon_k}+\gamma \bar{q}_a)^2 - J^R_{k} (u^R_{\varepsilon_k},\bar{q}^R_{a,\varepsilon_k})^2}{\gamma} \\ 
& = 2 \max(q^0 (b,u^R_{\varepsilon_k},\bar{q}^R_{a,\varepsilon_k}) - {q^0}^*(b) + \varepsilon_k,0) w^{0}_{\bar{q}_a} (b,u^R_{\varepsilon_k},\bar{q}^R_{a,\varepsilon_k}) \\ 
& \quad + 2 \Big\langle \bar{g}(\bar{q}^R_{a,\varepsilon_k},\bar{q}(b,u^R_{\varepsilon_k},\bar{q}^R_{a,\varepsilon_k})) - \P_{\S} (\bar{g}(\bar{q}^R_{a,\varepsilon_k},\bar{q}(b,u^R_{\varepsilon_k},\bar{q}^R_{a,\varepsilon_k}))), \frac{\partial \bar{g}}{\partial q_1} (\bar{q}^R_{a,\varepsilon_k},\bar{q}(b,u^R_{\varepsilon_k},\bar{q}^R_{a,\varepsilon_k}))q_a \Big\rangle_{\R^j} \\ 
&  \quad + 2 \Big\langle \bar{g}(\bar{q}^R_{a,\varepsilon_k},\bar{q}(b,u^R_{\varepsilon_k},\bar{q}^R_{a,\varepsilon_k})) - \P_{\S} (\bar{g}(\bar{q}^R_{a,\varepsilon_k},\bar{q}(b,u^R_{\varepsilon_k},\bar{q}^R_{a,\varepsilon_k}))), \\
& \qquad\qquad\qquad\qquad\qquad   \frac{\partial \bar{g}}{\partial q_2} (\bar{q}^R_{a,\varepsilon_k},\bar{q}(b,u^R_{\varepsilon_k},\bar{q}^R_{a,\varepsilon_k}))w_{\bar{q}_a} (b,u^R_{\varepsilon_k},\bar{q}^R_{a,\varepsilon_k}) \Big\rangle_{\R^j}.
\end{split}\end{equation*}
Since $J^R_{k} (u^R_{\varepsilon_k},\bar{q}^R_{a,\varepsilon_k}+\gamma \bar{q}_a)$ converges to $J^R_{k} (u^R_{\varepsilon_k},\bar{q}^R_{a,\varepsilon_k})$ as $\gamma$ tends to $0$, using \eqref{defp0R} and \eqref{defpsiepsR} it follows that
\begin{multline*}
-\sqrt{\varepsilon_k} \Vert \bar{q}_a \Vert \leq -\psi^{0R}_{\varepsilon_k}  w^{0}_{\bar{q}_a} (b,u^R_{\varepsilon_k},\bar{q}^R_{a,\varepsilon_k}) - \Big\langle \Big( \frac{\partial \bar{g}}{\partial q_1} (\bar{q}^R_{a,\varepsilon_k},\bar{q}(b,u^R_{\varepsilon_k},\bar{q}^R_{a,\varepsilon_k})) \Big)^{\!\mathrm{T}}  \psi^R_{\varepsilon_k} , q_a \Big\rangle_{\R^n} \\ 
- \Big\langle \Big( \frac{\partial \bar{g}}{\partial q_2} (\bar{q}^R_{a,\varepsilon_k},\bar{q}(b,u^R_{\varepsilon_k},\bar{q}^R_{a,\varepsilon_k})) \Big)^{\!\mathrm{T}}  \psi^R_{\varepsilon_k} , w_{\bar{q}_a} (b,u^R_{\varepsilon_k},\bar{q}^R_{a,\varepsilon_k}) \Big\rangle_{\R^n}.
\end{multline*}
By letting $k$ tend to $+\infty$, and using Lemma~\ref{lem34-2}, the lemma follows.
\end{proof}

At this step, we have obtained in the three previous lemmas the three fundamental inequalities \eqref{inegfondamscattered}, \eqref{inegfondamdense} and \eqref{inegfondampoint}, valuable for any $R> \Vert u^* \Vert_{\L^\infty_\T ([a,b[_\T,\R^m)}$. Recall that $\vert \psi^{0R} \vert^2 + \Vert \psi^R \Vert_{\R^j}^2 = 1 $ and $-\psi^R \in \O_{\S} (\bar{g} (\bar{q}^*_a,\bar{q}^*(b)))$.
Then, considering a sequence of real numbers $R_\ell$ converging to $+\infty$ as $\ell$ tends to $+\infty$, we infer that there exist $\psi^0 \leq 0$ and $\psi \in \R^j$ such that $\psi^{0R_\ell}$ converges to $\psi^0$ and $\psi^{R_\ell}$ converges to $\psi$ as $\ell$ tends to $+\infty$, and moreover $\vert \psi^0 \vert^2 + \Vert \psi \Vert_{\R^j}^2 = 1 $ and $-\psi \in \O_\S (\bar{g} (\bar{q}^*_a,\bar{q}^*(b))) $ (since $\O_\S (\bar{g} (\bar{q}^*_a,\bar{q}^*(b)))$ is a closed subset of $\R^j$).

We set $\LL_{[a,b[_\T} = \bigcap_{\ell \in \N} \LL^{R_\ell}_{[a,b[_\T}$. Note that $\mu_\DD (\LL_{[a,b[_\T}) = \mu_\DD ([a,b[_\T) = b-a$. Taking the limit in $\ell$ in \eqref{inegfondamscattered}, \eqref{inegfondamdense} and \eqref{inegfondampoint}, we get the following lemma.

\begin{lemma}
We have the following variational inequalities.

For every $r \in [a,b[_\T \cap \RR$, and every $y \in \DS(u^*(r))$, there holds
\begin{equation}\label{varineqscattered}
\psi^0  w^{0}_\Pi (b,u^*,\bar{q}^*_a) + \Big\langle \Big( \frac{\partial \bar{g}}{\partial q_2}(\bar{q}^*_a,\bar{q}^*(b)) \Big)^{\!\mathrm{T}} \psi , w_\Pi (b,u^*,\bar{q}^*_a) \Big\rangle_{\R^n} \leq 0, 
\end{equation}
where the variation vector $\bar w_\Pi=(w_\Pi,w^0_\Pi)$ associated with the needle-like variation $\Pi = (r,y)$ is defined by \eqref{varvect_scattered} (replacing $f$ with $\bar f$);

For every $s \in \LL_{[a,b[_\T} \cap \SS$, and every $z \in \Omega$, there holds
\begin{equation}\label{varineqdense}
\psi^0  w^{0}_\amalg (b,u^*,\bar{q}^*_a) + \Big\langle \Big( \frac{\partial \bar{g}}{\partial q_2}(\bar{q}^*_a,\bar{q}^*(b)) \Big)^{\!\mathrm{T}}  \psi , w_\amalg (b,u^*,\bar{q}^*_a) \rangle_{\R^n} \leq 0,
\end{equation}
where the variation vector $\bar w_\amalg=(w_\amalg,w^0_\amalg)$ associated with the needle-like variation $\amalg = (s,z)$ is defined by \eqref{varvect_dense} (replacing $f$ with $\bar f$);

For every $\bar{q}_a \in \R^n \times \{ 0 \}$, there holds
\begin{equation}\label{varineqpointinit}
\psi^0  w^{0}_{\bar{q}_a} (b,u^*,\bar{q}^*_a)+ \Big\langle \Big( \frac{\partial \bar{g}}{\partial q_2}(\bar{q}^*_a,\bar{q}^*(b)) \Big)^{\!\mathrm{T}} \psi , w_{\bar{q}_a} (b,u^*,\bar{q}^*_a) \Big\rangle_{\R^n} \leq - \Big\langle \Big( \frac{\partial \bar{g}}{\partial q_1}(\bar{q}^*_a,\bar{q}^*(b)) \Big)^{\!\mathrm{T}}  \psi, q_a \Big\rangle_{\R^n},
\end{equation}
where the variation vector $\bar w_{\bar{q}_a}=(w_{\bar{q}_a},w^0_{\bar{q}_a})$ associated with the variation $\bar{q}_a$ of the initial point $q_a^*$ is defined by \eqref{varvect_pointinit} (replacing $f$ with $\bar f$).
\end{lemma}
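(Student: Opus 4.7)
The three fundamental inequalities \eqref{inegfondamscattered}, \eqref{inegfondamdense}, \eqref{inegfondampoint} already established in Lemmas \ref{lemfondamscattered}, \ref{lemfondamdense}, \ref{lemfondampoint} hold for every $R>\Vert u^*\Vert_{\L^\infty_\T([a,b[_\T,\R^m)}$, with accompanying multipliers $(\psi^{0R},\psi^R)\in\R^-\times\R^j$ satisfying $|\psi^{0R}|^2+\Vert\psi^R\Vert_{\R^j}^2=1$ and $-\psi^R\in\O_\S(\bar g(\bar q^*_a,\bar q^*(b)))$. The whole content of the last lemma is therefore to pass to the limit $R\to+\infty$ and record the three resulting inequalities. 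Crucially, the variation vectors $w_\Pi(\cdot,u^*,\bar q^*_a)$, $w_\amalg(\cdot,u^*,\bar q^*_a)$, $w_{\bar q_a}(\cdot,u^*,\bar q^*_a)$ and the Jacobians of $\bar g$ at $(\bar q^*_a,\bar q^*(b))$ appearing in these inequalities do \emph{not} depend on $R$, so passing to the limit reduces to handling the pair $(\psi^{0R},\psi^R)$ only.

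First, I pick any sequence $R_\ell\to+\infty$. Since the multipliers $(\psi^{0R_\ell},\psi^{R_\ell})$ all live on the unit sphere of $\R\times\R^j$, a diagonal/compactness extraction (which can be done once and for all, independently of $r$, $s$, $z$, $\bar q_a$) produces a subsequence along which $\psi^{0R_\ell}\to\psi^0\leq 0$ and $\psi^{R_\ell}\to\psi$, with $|\psi^0|^2+\Vert\psi\Vert_{\R^j}^2=1$. The orthogonality $-\psi^{R_\ell}\in\O_\S(\bar g(\bar q^*_a,\bar q^*(b)))$ transfers to the limit because $\O_\S(\bar g(\bar q^*_a,\bar q^*(b)))$ is closed (it is a closed convex cone by its very definition \eqref{defortho}).

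Next, for each fixed datum ($r\in[a,b[_\T\cap\RR$ and $y\in\DS(u^*(r))$ in the scattered case; $s\in\LL_{[a,b[_\T}\cap\SS$ and $z\in\Omega$ in the dense case; $\bar q_a\in\R^n\times\{0\}$ in the initial-point case), I simply take $\ell\to+\infty$ in the corresponding inequality among \eqref{inegfondamscattered}, \eqref{inegfondamdense}, \eqref{inegfondampoint}. The only mild subtlety concerns \eqref{inegfondamdense}, where Lemma \ref{lemfondamdense} requires $z\in\Omega\cap\B_{\R^m}(0,R_\ell)$: for any prescribed $z\in\Omega$ this condition is automatically satisfied as soon as $R_\ell>\Vert z\Vert_{\R^m}$, hence for all sufficiently large $\ell$. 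Since the sets $\LL^{R_\ell}_{[a,b[_\T}$ all have full $\mu_\DD$-measure in $[a,b[_\T$, their countable intersection $\LL_{[a,b[_\T}=\bigcap_{\ell\in\N}\LL^{R_\ell}_{[a,b[_\T}$ does as well, so \eqref{varineqdense} holds for $\DD$-almost every $s\in[a,b[_\T\cap\SS$.

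I do not foresee any real obstacle: everything reduces to compactness of the unit sphere, closedness of the orthogonal cone, and the observation that the variation vectors and Jacobians in the three inequalities are fixed once the optimal trajectory is fixed. The three lines \eqref{varineqscattered}, \eqref{varineqdense}, \eqref{varineqpointinit} then drop out of \eqref{inegfondamscattered}, \eqref{inegfondamdense}, \eqref{inegfondampoint} respectively, and the conditions $\psi^0\leq 0$, $|\psi^0|^2+\Vert\psi\Vert_{\R^j}^2=1$, $-\psi\in\O_\S(\bar g(\bar q^*_a,\bar q^*(b)))$ are automatic from the limit extraction.
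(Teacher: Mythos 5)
Your proposal is correct and follows essentially the same route as the paper: fix a sequence $R_\ell\to+\infty$, extract (once and for all) a convergent subsequence of the unit-sphere multipliers $(\psi^{0R_\ell},\psi^{R_\ell})$, transfer the sign, normalization and orthogonality conditions to the limit by closedness of $\O_\S(\bar g(\bar q^*_a,\bar q^*(b)))$, set $\LL_{[a,b[_\T}=\bigcap_\ell\LL^{R_\ell}_{[a,b[_\T}$, and pass to the limit in the three $R$-indexed inequalities, whose variation vectors and Jacobians are indeed independent of $R$. Your explicit handling of the constraint $z\in\Omega\cap\B_{\R^m}(0,R_\ell)$ for large $\ell$ is a detail the paper leaves implicit, but the argument is the same.
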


This result concludes the application of Ekeland's Variational Principle.
The last step of the proof consists of deriving the PMP from these inequalities.

\subsubsection{End of the proof}\label{section46}
We define $\bar{p}(\cdot)=(p(\cdot),p^0(\cdot))$ as the unique solution on $[a,b]_\T$ of the backward shifted linear $\DD$-Cauchy problem
\begin{equation*}
\bar{p}^\DD(t) = - \Big( \frac{\partial \bar{f}}{\partial \bar{q}} (\bar{q}^*(t),u^*(t),t) \Big)^{\!\mathrm{T}} \bar{p}^\sigma(t), \quad
\bar{p}(b) = \Big( \Big( \frac{\partial \bar{g}}{\partial q_2} (\bar{q}^*_a,\bar{q}^*(b)) \Big)^{\!\mathrm{T}} \psi,\psi^0 \Big)^{\!\mathrm{T}}.
\end{equation*}
The existence and uniqueness of $\bar{p}(\cdot)$ are ensured by \cite[Theorem 6]{bour10}. Since $\bar f$ does not depend on $q^0$, it is clear that $p^0(\cdot)$ is constant, still denoted by $p^0$ (with $p^0=\psi^0$).

\paragraph{Right-scattered points.} Let $r \in [a,b[_\T \cap \RR$ and $y \in \DS(u^*(r))$. Since the function $t\mapsto \langle \bar{w}_\Pi(t,u^*,\bar{q}^*_a),\bar{p}(t) \rangle_{\R^{n+1}}$ is absolutely continuous, it holds $\langle \bar{p}(\cdot) , \bar{w}_\Pi(\cdot,u^*,\bar{q}^*_a) \rangle_{\R^{n+1}}^\DD = 0
$ $\DD$-almost everywhere on $[\sigma(r),b[_\T$ from the Leibniz formula \eqref{eqleibniz} and hence the function $\langle \bar{p}(\cdot) , \bar{w}_\Pi(\cdot,u^*,\bar{q}^*_a) \rangle_{\R^{n+1}}$ is constant on $[\sigma(r),b]_\T$. It thus follows from \eqref{varineqscattered} that
\begin{equation*}
\begin{split}
\langle \bar{p}(\sigma(r)) , \bar{w}_\Pi(\sigma(r),u^*,\bar{q}^*_a) \rangle_{\R^{n+1}} 
& = \langle \bar{p}(b) , \bar{w}_\Pi(b,u^*,\bar{q}^*_a) \rangle_{\R^{n+1}} \\ 
& = \psi^0  w^{0}_\Pi (b,u^*,\bar{q}^*_a) + \Big\langle \Big( \frac{\partial \bar{g}}{\partial q_2} (\bar{q}^*_a,\bar{q}^*(b)) \Big)^{\!\mathrm{T}}  \psi , w_\Pi (b,u^*,\bar{q}^*_a) \Big\rangle_{\R^n} \leq 0 ,
\end{split}
\end{equation*}
and since $\bar{w}_\Pi(\sigma(r),u^*,\bar{q}^*_a) = \mu (r) \frac{\partial \bar{f}}{\partial u} (\bar{q}^*(r),u^*(r),r) (y-u^*(r))$, we finally get
\begin{equation*}
\Big\langle \frac{\partial H}{\partial u} (\bar{q}^*(r),u^*(r),\bar{p}^\sigma(r),r) ,y-u^*(r) \Big\rangle_{\R^m} \leq 0.
\end{equation*}
Since this inequality holds for every $y \in \DS (u^*(r))$, we easily prove that it holds as well for every $v \in \Coad ( \DS (u^*(r)) )$. This proves \eqref{eqrscase}.

\paragraph{Right-dense points.} Let $s \in \LL_{[a,b[_\T} \cap \SS$ and $z \in \Omega$. Since $t\mapsto\langle \bar{w}_\amalg(t,u^*,\bar{q}^*_a),\bar{p}(t) \rangle_{\R^{n+1}}$ is an absolutely continuous function, the Leibniz formula \eqref{eqleibniz} yields $\langle \bar{p}(\cdot) , \bar{w}_\amalg(\cdot,u^*,\bar{q}^*_a) \rangle_{\R^{n+1}}^\DD = 0$ $\DD$-almost everywhere
on $[s,b[_\T$, and hence this function is constant on $[s,b]_\T$. It thus follows from \eqref{varineqdense} that
\begin{equation*}
\begin{split}
\langle \bar{p}(s) , \bar{w}_\amalg(s,u^*,\bar{q}^*_a) \rangle_{\R^{n+1}} 
& = \langle \bar{p}(b) , \bar{w}_\amalg(b,u^*,\bar{q}^*_a) \rangle_{\R^{n+1}} \\ 
& = \psi^0  w^{0}_\amalg (b,u^*,\bar{q}^*_a) +  \Big\langle \Big( \frac{\partial \bar{g}}{\partial q_2} (\bar{q}^*_a,\bar{q}^*(b)) \Big)^{\!\mathrm{T}}  \psi , w_\amalg (b,u^*,\bar{q}^*_a) \Big\rangle_{\R^n} \leq 0,
\end{split}
\end{equation*}
and since $\bar{w}_\amalg(s,u^*,\bar{q}^*_a) = \bar{f} (\bar{q}^*(s),z,s) - \bar{f} (\bar{q}(s),u^*(s),s)$, we finally get
$$
\langle  \bar{p}(s) , \bar{f} (\bar{q}^*(s),z,s) \rangle_{\R^{n+1}} \leq \langle \bar{p}(s) , \bar{f} (\bar{q}(s),u^*(s),s) \rangle_{\R^{n+1}}.
$$
Since this inequality holds for every $z \in \Omega$, the maximization condition \eqref{eqrdcase} follows.

\paragraph{Transversality conditions.} 
The transversality condition on the adjoint vector $p$ at the final time $b$ has been obtained by definition (note that $-\psi \in \O_\S (\bar{g} (\bar{q}^*_a,\bar{q}^*(b))) $ as mentioned previously). Let us now establish the transversality condition at the initial time $a$ (left-hand equality of \eqref{transv_cond}).
Let $\bar{q}_a \in \R^n \times \{ 0 \}$. With the same arguments as before, we prove that the function $t\mapsto\langle \bar{w}_{\bar{q}_a}(t,u^*,\bar{q}^*_a),\bar{p}(t) \rangle_{\R^{n+1}}$
is constant on $[a,b]_\T$. It thus follows from \eqref{varineqpointinit} that
\begin{equation*}
\begin{split}
\langle \bar{p}(a) , \bar{w}_{\bar{q}_a}(a,u^*,\bar{q}^*_a) \rangle_{\R^{n+1}} 
& = \langle \bar{p}(b) , \bar{w}_{\bar{q}_a}(b,u^*,\bar{q}^*_a) \rangle_{\R^{n+1}} \\ 
& = \psi^0  w^{0}_{\bar{q}_a} (b,u^*,\bar{q}^*_a)+ \Big\langle \Big( \frac{\partial \bar{g}}{\partial q_2}(\bar{q}^*_a,\bar{q}^*(b)) \Big)^{\!\mathrm{T}} \psi , w_{\bar{q}_a} (b,u^*,\bar{q}^*_a) \Big\rangle_{\R^n} \\ 
& \leq - \Big\langle \Big( \frac{\partial \bar{g}}{\partial q_1}(\bar{q}^*_a,\bar{q}^*(b)) \Big)^{\!\mathrm{T}} \psi, q_a \Big\rangle_{\R^n}  ,
\end{split}
\end{equation*}
and since $\bar{w}_{\bar{q}_a}(a,u^*,\bar{q}^*_a)=\bar{q}_a=(q_a,0)$, we finally get
\begin{equation*}
\Big\langle p(a,u^*,\bar{q}^*_a)+ \Big( \frac{\partial \bar{g}}{\partial q_1}(\bar{q}^*_a,\bar{q}^*(b)) \Big)^{\!\mathrm{T}} \psi , q_a \Big\rangle_{\R^n} \leq 0.
\end{equation*}
Since this inequality holds for every $\bar{q}_a \in \R^n \times \{ 0 \}$, the left-hand equality of \eqref{transv_cond} follows.

\paragraph{Free final time.}
Assume that the final time is not fixed in $\OCP$, and let $b^*$ be the final time associated with the optimal trajectory $q^*(\cdot)$. We assume moreover that $b^*$ belongs to the interior of $\T$ for the topology of $\R$. The proof of \eqref{transv_cond_time} then goes exactly as in the classical continuous-time case, and thus we do not provide any details. It suffices to consider variations of the final time $b$ in a neighbourhood of $b^*$, and to modify accordingly the functional of Section \ref{section41} to which Ekeland's Variational Principle is applied.

To derive \eqref{eqnullinthamilt}, we consider the change of variable $\tilde{t} = (t-a) / (b-a)$. The crucial remark is that, since it is an \textit{affine} change of variable, $\DD$-derivatives of compositions work in the time scale setting as in the time-continuous case. Then it suffices to consider the resulting optimal control problem as a parametrized one with parameter $b$ lying in a neighbourhood of $b^*$. Then \eqref{eqnullinthamilt} follows from the additional condition \eqref{condHlambda} of the PMP with parameters (see Remark \ref{remPMPparam}), which is established hereafter.

\paragraph{PMP with parameters (Remark \ref{remPMPparam}).}
To obtain the statement it suffices to apply the PMP to the control system associated with the dynamics defined by $\tilde{f}(\lambda,q,u,t) =  (f(\lambda,q,u,t),0)^{\!\mathrm{T}}$, with the extended state $\tilde{q}=(\lambda,q)$. In other words, we add to the control system the equation $\lambda^\Delta(t)=0$ (this is a standard method to derive a parametrized version of the PMP).
Applying the PMP then yields an adjoint vector $\tilde p = (p_\lambda,p)$, where $p$ clearly satisfies all conclusions of Theorem \ref{thmmain} (except \eqref{eqnullinthamilt}), and $p_\lambda^\Delta(t)=-\frac{\partial H}{\partial\lambda}(\lambda^*,q^*(t),u^*(t),p^\sigma(t),p^0,t)$ $\DD$-almost everywhere. From this last equation it follows that
$p_\lambda(b)-p_\lambda(a)=-\int_{[a,b^*[}\frac{\partial H}{\partial\lambda}(\lambda^*,q^*(t),u^*(t),p^\sigma(t),p^0,t)\, \Delta t$, and then \eqref{condHlambda} follows from the already established transversality conditions.

\bibliographystyle{plain}

\end{document}